\chardef\@x10\chardef\@xv60
\def\tcitime{
\def\@time{%
  \@minute\time\@hour\@minute\divide\@hour\@xv
  \ifnum\@hour<\@x 0\fi\the\@hour:%
  \multiply\@hour\@xv\advance\@minute-\@hour
  \ifnum\@minute<\@x 0\fi\the\@minute
  }}%
\def\QCTOpt[#1]#2{%
  \def\QCTOptB{#1}
  \def\QCTOptA{#2}
}
\def\QCTNOpt#1{%
  \def\QCTOptA{#1}
  \let\QCTOptB\empty
}
\def\Qct{%
  \@ifnextchar[{%
    \QCTOpt}{\QCTNOpt}
}
\def\QCBOpt[#1]#2{%
  \def\QCBOptB{#1}
  \def\QCBOptA{#2}
}
\def\QCBNOpt#1{%
  \def\QCBOptA{#1}
  \let\QCBOptB\empty
}
\def\Qcb{%
  \@ifnextchar[{%
    \QCBOpt}{\QCBNOpt}
}
\def\PrepCapArgs{%
  \ifx\QCBOptA\empty
    \ifx\QCTOptA\empty
      {}%
    \else
      \ifx\QCTOptB\empty
        {\QCTOptA}%
      \else
        [\QCTOptB]{\QCTOptA}%
      \fi
    \fi
  \else
    \ifx\QCBOptA\empty
      {}%
    \else
      \ifx\QCBOptB\empty
        {\QCBOptA}%
      \else
        [\QCBOptB]{\QCBOptA}%
      \fi
    \fi
  \fi
}
\def\GRAPHICSPS#1{%
 \ifcase\GRAPHICSTYPE
   \special{ps: #1}%
 \or
   \special{language "PS", include "#1"}%
 \fi
}%
\def\graffile#1#2#3#4{%
    \leavevmode
    \raise -#4 \BOXTHEFRAME{%
        \hbox to #2{\raise #3\hbox to #2{\null #1\hfil}}}%
}%
\def\draftbox#1#2#3#4{%
 \leavevmode\raise -#4 \hbox{%
  \frame{\rlap{\protect\tiny #1}\hbox to #2%
   {\vrule height#3 width\z@ depth\z@\hfil}%
  }%
 }%
}%
\newif\ifwasdraft
\def\GRAPHIC#1#2#3#4#5{%
 \ifnum\draft=\@ne\draftbox{#2}{#3}{#4}{#5}%
  \else\graffile{#1}{#3}{#4}{#5}%
  \fi
 }%
\def\addtoLaTeXparams#1{%
    \edef\LaTeXparams{\LaTeXparams #1}}%
\newif\ifBoxFrame \BoxFramefalse
\newif\ifOverFrame \OverFramefalse
\newif\ifUnderFrame \UnderFramefalse
\def\BOXTHEFRAME#1{%
   \hbox{%
      \ifBoxFrame
         \frame{#1}%
      \else
         {#1}%
      \fi
   }%
}
\def\doFRAMEparams#1{\BoxFramefalse\OverFramefalse\UnderFramefalse\readFRAMEparams#1\end}%
\def\readFRAMEparams#1{%
 \ifx#1\end%
  \let\next=\relax
  \else
  \ifx#1i\dispkind=\z@\fi
  \ifx#1d\dispkind=\@ne\fi
  \ifx#1f\dispkind=\tw@\fi
  \ifx#1t\addtoLaTeXparams{t}\fi
  \ifx#1b\addtoLaTeXparams{b}\fi
  \ifx#1p\addtoLaTeXparams{p}\fi
  \ifx#1h\addtoLaTeXparams{h}\fi
  \ifx#1X\BoxFrametrue\fi
  \ifx#1O\OverFrametrue\fi
  \ifx#1U\UnderFrametrue\fi
  \ifx#1w
    \ifnum\draft=1\wasdrafttrue\else\wasdraftfalse\fi
    \draft=\@ne
  \fi
  \let\next=\readFRAMEparams
  \fi
 \next
 }%
\def\IFRAME#1#2#3#4#5#6{%
      \bgroup
      \let\QCTOptA\empty
      \let\QCTOptB\empty
      \let\QCBOptA\empty
      \let\QCBOptB\empty
      #6%
      \parindent=0pt%
      \leftskip=0pt
      \rightskip=0pt
      \setbox0 = \hbox{\QCBOptA}%
      \@tempdima = #1\relax
      \ifOverFrame
          \typeout{This is not implemented yet}%
          \show\HELP
      \else
         \ifdim\wd0>\@tempdima
            \advance\@tempdima by \@tempdima
            \ifdim\wd0 >\@tempdima
               \textwidth=\@tempdima
               \setbox1 =\vbox{%
                  \noindent\hbox to \@tempdima{\hfill\GRAPHIC{#5}{#4}{#1}{#2}{#3}\hfill}\\%
                  \noindent\hbox to \@tempdima{\parbox[b]{\@tempdima}{\QCBOptA}}%
               }%
               \wd1=\@tempdima
            \else
               \textwidth=\wd0
               \setbox1 =\vbox{%
                 \noindent\hbox to \wd0{\hfill\GRAPHIC{#5}{#4}{#1}{#2}{#3}\hfill}\\%
                 \noindent\hbox{\QCBOptA}%
               }%
               \wd1=\wd0
            \fi
         \else
            \ifdim\wd0>0pt
              \hsize=\@tempdima
              \setbox1 =\vbox{%
                \unskip\GRAPHIC{#5}{#4}{#1}{#2}{0pt}%
                \break
                \unskip\hbox to \@tempdima{\hfill \QCBOptA\hfill}%
              }%
              \wd1=\@tempdima
           \else
              \hsize=\@tempdima
              \setbox1 =\vbox{%
                \unskip\GRAPHIC{#5}{#4}{#1}{#2}{0pt}%
              }%
              \wd1=\@tempdima
           \fi
         \fi
         \@tempdimb=\ht1
         \advance\@tempdimb by \dp1
         \advance\@tempdimb by -#2%
         \advance\@tempdimb by #3%
         \leavevmode
         \raise -\@tempdimb \hbox{\box1}%
      \fi
      \egroup%
}%
\def\DFRAME#1#2#3#4#5{%
 \begin{center}
     \let\QCTOptA\empty
     \let\QCTOptB\empty
     \let\QCBOptA\empty
     \let\QCBOptB\empty
     \ifOverFrame 
        #5\QCTOptA\par
     \fi
     \GRAPHIC{#4}{#3}{#1}{#2}{\z@}
     \ifUnderFrame 
        \nobreak\par #5\QCBOptA
     \fi
 \end{center}%
 }%
\def\FFRAME#1#2#3#4#5#6#7{%
 \begin{figure}[#1]%
  \let\QCTOptA\empty
  \let\QCTOptB\empty
  \let\QCBOptA\empty
  \let\QCBOptB\empty
  \ifOverFrame
    #4
    \ifx\QCTOptA\empty
    \else
      \ifx\QCTOptB\empty
        \caption{\QCTOptA}%
      \else
        \caption[\QCTOptB]{\QCTOptA}%
      \fi
    \fi
    \ifUnderFrame\else
      \label{#5}%
    \fi
  \else
    \UnderFrametrue%
  \fi
  \begin{center}\GRAPHIC{#7}{#6}{#2}{#3}{\z@}\end{center}%
  \ifUnderFrame
    #4
    \ifx\QCBOptA\empty
      \caption{}%
    \else
      \ifx\QCBOptB\empty
        \caption{\QCBOptA}%
      \else
        \caption[\QCBOptB]{\QCBOptA}%
      \fi
    \fi
    \label{#5}%
  \fi
  \end{figure}%
 }%
\def\makeactives{
  \catcode`\"=\active
  \catcode`\;=\active
  \catcode`\:=\active
  \catcode`\'=\active
  \catcode`\~=\active
}
   \gdef\activesoff{%
      \def"{\string"}
      \def;{\string;}
      \def:{\string:}
      \def'{\string'}
      \def~{\string~}
    }
\def\FRAME#1#2#3#4#5#6#7#8{%
 \bgroup
 \@ifundefined{bbl@deactivate}{}{\activesoff}
 \ifnum\draft=\@ne
   \wasdrafttrue
 \else
   \wasdraftfalse%
 \fi
 \def\LaTeXparams{}%
 \dispkind=\z@
 \def\LaTeXparams{}%
 \doFRAMEparams{#1}%
 \ifnum\dispkind=\z@\IFRAME{#2}{#3}{#4}{#7}{#8}{#5}\else
  \ifnum\dispkind=\@ne\DFRAME{#2}{#3}{#7}{#8}{#5}\else
   \ifnum\dispkind=\tw@
    \edef\@tempa{\noexpand\FFRAME{\LaTeXparams}}%
    \@tempa{#2}{#3}{#5}{#6}{#7}{#8}%
    \fi
   \fi
  \fi
  \ifwasdraft\draft=1\else\draft=0\fi{}%
  \egroup
 }%
\def\TEXUX#1{"texux"}
\def\limfunc#1{\mathop{\rm #1}}%
\long\def\QQQ#1#2{%
     \long\expandafter\def\csname#1\endcsname{#2}}%
\long\def\QQA#1#2{}%
\def\QTR#1#2{{\csname#1\endcsname #2}}
\def\EXPAND#1[#2]#3{}%
\def\NOEXPAND#1[#2]#3{}%
\def\LaTeXparent#1{}%
\def\ChildStyles#1{}%
\def\ChildDefaults#1{}%
\def\QTagDef#1#2#3{}%
\def\QQfnmark#1{\footnotemark}
\def\makeatletter\input gnuindex.sty\makeatother\makeindex{\makeatletter\input gnuindex.sty\makeatother\makeindex}%
\def\initial#1{\bigbreak{\raggedright\large\bf #1}\kern 2\p@\penalty3000}}%
 \def\abstract{%
  \if@twocolumn
   \section*{Abstract (Not appropriate in this style!)}%
   \else \small 
   \begin{center}{\bf Abstract\vspace{-.5em}\vspace{\z@}}\end{center}%
   \quotation 
   \fi
  }%
   \def\registered{\relax\ifmmode{}\r@gistered
                    \else$\m@th\r@gistered$\fi}%
 \def\r@gistered{^{\ooalign
  {\hfil\raise.07ex\hbox{$\scriptstyle\rm\text{R}$}\hfil\crcr
  \mathhexbox20D}}}}{}%
\newdimen\theight
\def\Column{%
 \vadjust{\setbox\z@=\hbox{\scriptsize\quad\quad tcol}%
  \theight=\ht\z@\advance\theight by \dp\z@\advance\theight by \lineskip
  \kern -\theight \vbox to \theight{%
   \rightline{\rlap{\box\z@}}%
   \vss
   }%
  }%
 }%
\def\qed{%
 \ifhmode\unskip\nobreak\fi\ifmmode\ifinner\else\hskip5\p@\fi\fi
 \hbox{\hskip5\p@\vrule width4\p@ height6\p@ depth1.5\p@\hskip\p@}%
 }%
\def\miss{\hbox{\vrule height2\p@ width 2\p@ depth\z@}}%
\def\tcol#1{{\baselineskip=6\p@ \vcenter{#1}} \Column}  %
\def\newfmtname{LaTeX2e}
\def\chkcompat{%
   \if@compatibility
   \else
     \usepackage{latexsym}
   \fi
}
  \DeclareOldFontCommand{\rm}{\normalfont\rmfamily}{\mathrm}
  \DeclareOldFontCommand{\sf}{\normalfont\sffamily}{\mathsf}
  \DeclareOldFontCommand{\tt}{\normalfont\ttfamily}{\mathtt}
  \DeclareOldFontCommand{\bf}{\normalfont\bfseries}{\mathbf}
  \DeclareOldFontCommand{\it}{\normalfont\itshape}{\mathit}
  \DeclareOldFontCommand{\sl}{\normalfont\slshape}{\@nomath\sl}
  \DeclareOldFontCommand{\sc}{\normalfont\scshape}{\@nomath\sc}
\def\alpha{{\Greekmath 010B}}%
\def\beta{{\Greekmath 010C}}%
\def\gamma{{\Greekmath 010D}}%
\def\delta{{\Greekmath 010E}}%
\def\epsilon{{\Greekmath 010F}}%
\def\zeta{{\Greekmath 0110}}%
\def\eta{{\Greekmath 0111}}%
\def\theta{{\Greekmath 0112}}%
\def\iota{{\Greekmath 0113}}%
\def\kappa{{\Greekmath 0114}}%
\def\lambda{{\Greekmath 0115}}%
\def\mu{{\Greekmath 0116}}%
\def\nu{{\Greekmath 0117}}%
\def\xi{{\Greekmath 0118}}%
\def\pi{{\Greekmath 0119}}%
\def\rho{{\Greekmath 011A}}%
\def\sigma{{\Greekmath 011B}}%
\def\tau{{\Greekmath 011C}}%
\def\upsilon{{\Greekmath 011D}}%
\def\phi{{\Greekmath 011E}}%
\def\chi{{\Greekmath 011F}}%
\def\psi{{\Greekmath 0120}}%
\def\omega{{\Greekmath 0121}}%
\def\varepsilon{{\Greekmath 0122}}%
\def\vartheta{{\Greekmath 0123}}%
\def\varpi{{\Greekmath 0124}}%
\def\varrho{{\Greekmath 0125}}%
\def\varsigma{{\Greekmath 0126}}%
\def\varphi{{\Greekmath 0127}}%
\def\nabla{{\Greekmath 0272}}
\def\FindBoldGroup{%
   {\setbox0=\hbox{$\mathbf{x\global\edef\theboldgroup{\the\mathgroup}}$}}%
}
\def\Greekmath#1#2#3#4{%
    \if@compatibility
        \ifnum\mathgroup=\symbold
           \mathchoice{\mbox{\boldmath$\displaystyle\mathchar"#1#2#3#4$}}%
                      {\mbox{\boldmath$\textstyle\mathchar"#1#2#3#4$}}%
                      {\mbox{\boldmath$\scriptstyle\mathchar"#1#2#3#4$}}%
                      {\mbox{\boldmath$\scriptscriptstyle\mathchar"#1#2#3#4$}}%
        \else
           \mathchar"#1#2#3#4%
        \fi 
    \else 
        \FindBoldGroup
        \ifnum\mathgroup=\theboldgroup 
           \mathchoice{\mbox{\boldmath$\displaystyle\mathchar"#1#2#3#4$}}%
                      {\mbox{\boldmath$\textstyle\mathchar"#1#2#3#4$}}%
                      {\mbox{\boldmath$\scriptstyle\mathchar"#1#2#3#4$}}%
                      {\mbox{\boldmath$\scriptscriptstyle\mathchar"#1#2#3#4$}}%
        \else
           \mathchar"#1#2#3#4%
        \fi     	    
	  \fi}
\newif\ifGreekBold  \GreekBoldfalse
\let\SAVEPBF=\pbf
\def\pbf{\GreekBoldtrue\SAVEPBF}%
  \newcounter{equationnumber}  
  \def\mathletters{%
     \addtocounter{equation}{1}
     \edef\@currentlabel{\theequation}%
     \setcounter{equationnumber}{\c@equation}
     \setcounter{equation}{0}%
     \edef\theequation{\@currentlabel\noexpand\alph{equation}}%
  }
    \def\BibTeX{{\rm B\kern-.05em{\sc i\kern-.025em b}\kern-.08em
                 T\kern-.1667em\lower.7ex\hbox{E}\kern-.125emX}}}{}%
\def\AmS{{\protect\usefont{OMS}{cmsy}{m}{n}%
                A\kern-.1667em\lower.5ex\hbox{M}\kern-.125emS}}}{}%
\let\DOTSI\relax
\def\eat@#1{}%
\def\RIfM@{\relax\ifmmode}%
\def\FN@{\futurelet\next}%
\def\iint{\DOTSI\intno@\tw@\FN@\ints@}%
\def\iiint{\DOTSI\intno@\thr@@\FN@\ints@}%
\def\iiiint{\DOTSI\intno@4 \FN@\ints@}%
\def\idotsint{\DOTSI\intno@\z@\FN@\ints@}%
\def\ints@{\findlimits@\ints@@}%
\newif\iflimtoken@
\newif\iflimits@
\def\findlimits@{\limtoken@true\ifx\next\limits\limits@true
 \else\ifx\next\nolimits\limits@false\else
 \limtoken@false\ifx\ilimits@\nolimits\limits@false\else
 \ifinner\limits@false\else\limits@true\fi\fi\fi\fi}%
\def\multint@{\int\ifnum\intno@=\z@\intdots@                          
 \else\intkern@\fi                                                    
 \ifnum\intno@>\tw@\int\intkern@\fi                                   
 \ifnum\intno@>\thr@@\int\intkern@\fi                                 
 \int}
\def\multintlimits@{\intop\ifnum\intno@=\z@\intdots@\else\intkern@\fi
 \ifnum\intno@>\tw@\intop\intkern@\fi
 \ifnum\intno@>\thr@@\intop\intkern@\fi\intop}%
\def\intic@{%
    \mathchoice{\hskip.5em}{\hskip.4em}{\hskip.4em}{\hskip.4em}}%
\def\negintic@{\mathchoice
 {\hskip-.5em}{\hskip-.4em}{\hskip-.4em}{\hskip-.4em}}%
\def\ints@@{\iflimtoken@                                              
 \def\ints@@@{\iflimits@\negintic@
   \mathop{\intic@\multintlimits@}\limits                             
  \else\multint@\nolimits\fi                                          
  \eat@}
 \else                                                                
 \def\ints@@@{\iflimits@\negintic@
  \mathop{\intic@\multintlimits@}\limits\else
  \multint@\nolimits\fi}\fi\ints@@@}%
\def\intkern@{\mathchoice{\!\!\!}{\!\!}{\!\!}{\!\!}}%
\def\plaincdots@{\mathinner{\cdotp\cdotp\cdotp}}%
\def\intdots@{\mathchoice{\plaincdots@}%
 {{\cdotp}\mkern1.5mu{\cdotp}\mkern1.5mu{\cdotp}}%
 {{\cdotp}\mkern1mu{\cdotp}\mkern1mu{\cdotp}}%
 {{\cdotp}\mkern1mu{\cdotp}\mkern1mu{\cdotp}}}%
\def\RIfM@{\relax\protect\ifmmode}
\def\text{\RIfM@\expandafter\text@\else\expandafter\mbox\fi}
\let\nfss@text\text
\def\text@#1{\mathchoice
   {\textdef@\displaystyle\f@size{#1}}%
   {\textdef@\textstyle\tf@size{\firstchoice@false #1}}%
   {\textdef@\textstyle\sf@size{\firstchoice@false #1}}%
   {\textdef@\textstyle \ssf@size{\firstchoice@false #1}}%
   \glb@settings}
\def\textdef@#1#2#3{\hbox{{%
                    \everymath{#1}%
                    \let\f@size#2\selectfont
                    #3}}}
\newif\iffirstchoice@
\def\Let@{\relax\iffalse{\fi\let\\=\cr\iffalse}\fi}%
\def\vspace@{\def\vspace##1{\crcr\noalign{\vskip##1\relax}}}%
\def\multilimits@{\bgroup\vspace@\Let@
 \baselineskip\fontdimen10 \scriptfont\tw@
 \advance\baselineskip\fontdimen12 \scriptfont\tw@
 \lineskip\thr@@\fontdimen8 \scriptfont\thr@@
 \lineskiplimit\lineskip
 \vbox\bgroup\ialign\bgroup\hfil$\m@th\scriptstyle{##}$\hfil\crcr}%
\def\Sb{_\multilimits@}%
\def\endSb{\crcr\egroup\egroup\egroup}%
\def\Sp{^\multilimits@}%
\newdimen\ex@
\def\rightarrowfill@#1{$#1\m@th\mathord-\mkern-6mu\cleaders
 \hbox{$#1\mkern-2mu\mathord-\mkern-2mu$}\hfill
 \mkern-6mu\mathord\rightarrow$}%
\def\leftarrowfill@#1{$#1\m@th\mathord\leftarrow\mkern-6mu\cleaders
 \hbox{$#1\mkern-2mu\mathord-\mkern-2mu$}\hfill\mkern-6mu\mathord-$}%
\def\leftrightarrowfill@#1{$#1\m@th\mathord\leftarrow
\mkern-6mu\cleaders
 \hbox{$#1\mkern-2mu\mathord-\mkern-2mu$}\hfill
 \mkern-6mu\mathord\rightarrow$}%
\def\overrightarrow{\mathpalette\overrightarrow@}%
\def\overrightarrow@#1#2{\vbox{\ialign{##\crcr\rightarrowfill@#1\crcr
 \noalign{\kern-\ex@\nointerlineskip}$\m@th\hfil#1#2\hfil$\crcr}}}%
\def\overleftarrow{\mathpalette\overleftarrow@}%
\def\overleftarrow@#1#2{\vbox{\ialign{##\crcr\leftarrowfill@#1\crcr
 \noalign{\kern-\ex@\nointerlineskip}$\m@th\hfil#1#2\hfil$\crcr}}}%
\def\overleftrightarrow{\mathpalette\overleftrightarrow@}%
\def\overleftrightarrow@#1#2{\vbox{\ialign{##\crcr
   \leftrightarrowfill@#1\crcr
 \noalign{\kern-\ex@\nointerlineskip}$\m@th\hfil#1#2\hfil$\crcr}}}%
\def\underrightarrow{\mathpalette\underrightarrow@}%
\def\underrightarrow@#1#2{\vtop{\ialign{##\crcr$\m@th\hfil#1#2\hfil
  $\crcr\noalign{\nointerlineskip}\rightarrowfill@#1\crcr}}}%
\def\underleftarrow{\mathpalette\underleftarrow@}%
\def\underleftarrow@#1#2{\vtop{\ialign{##\crcr$\m@th\hfil#1#2\hfil
  $\crcr\noalign{\nointerlineskip}\leftarrowfill@#1\crcr}}}%
\def\underleftrightarrow{\mathpalette\underleftrightarrow@}%
\def\underleftrightarrow@#1#2{\vtop{\ialign{##\crcr$\m@th
  \hfil#1#2\hfil$\crcr
 \noalign{\nointerlineskip}\leftrightarrowfill@#1\crcr}}}%
\def\qopnamewl@#1{\mathop{\operator@font#1}\nlimits@}
\let\nlimits@\displaylimits
\def\setboxz@h{\setbox\z@\hbox}
\def\varlim@#1#2{\mathop{\vtop{\ialign{##\crcr
 \hfil$#1\m@th\operator@font lim$\hfil\crcr
 \noalign{\nointerlineskip}#2#1\crcr
 \noalign{\nointerlineskip\kern-\ex@}\crcr}}}}
 \def\rightarrowfill@#1{\m@th\setboxz@h{$#1-$}\ht\z@\z@
  $#1\copy\z@\mkern-6mu\cleaders
  \hbox{$#1\mkern-2mu\box\z@\mkern-2mu$}\hfill
  \mkern-6mu\mathord\rightarrow$}
\def\leftarrowfill@#1{\m@th\setboxz@h{$#1-$}\ht\z@\z@
  $#1\mathord\leftarrow\mkern-6mu\cleaders
  \hbox{$#1\mkern-2mu\copy\z@\mkern-2mu$}\hfill
  \mkern-6mu\box\z@$}
\def\projlim{\qopnamewl@{proj\,lim}}
\def\injlim{\qopnamewl@{inj\,lim}}
\def\varinjlim{\mathpalette\varlim@\rightarrowfill@}
\def\varprojlim{\mathpalette\varlim@\leftarrowfill@}
\def\varliminf{\mathpalette\varliminf@{}}
\def\varliminf@#1{\mathop{\underline{\vrule\@depth.2\ex@\@width\z@
   \hbox{$#1\m@th\operator@font lim$}}}}
\def\varlimsup{\mathpalette\varlimsup@{}}
\def\varlimsup@#1{\mathop{\overline
  {\hbox{$#1\m@th\operator@font lim$}}}}
\def\align{\@verbatim \frenchspacing\@vobeyspaces \@alignverbatim
You are using the "align" environment in a style in which it is not defined.}
\let\csname endalign*\endcsname =\endtrivlist
\def\alignat{\@verbatim \frenchspacing\@vobeyspaces \@alignatverbatim
You are using the "alignat" environment in a style in which it is not defined.}
\let\csname endalignat*\endcsname =\endtrivlist
\def\xalignat{\@verbatim \frenchspacing\@vobeyspaces \@xalignatverbatim
You are using the "xalignat" environment in a style in which it is not defined.}
\let\csname endxalignat*\endcsname =\endtrivlist
\def\gather{\@verbatim \frenchspacing\@vobeyspaces \@gatherverbatim
You are using the "gather" environment in a style in which it is not defined.}
\let\csname endgather*\endcsname =\endtrivlist
\def\multiline{\@verbatim \frenchspacing\@vobeyspaces \@multilineverbatim
You are using the "multiline" environment in a style in which it is not defined.}
\let\csname endmultiline*\endcsname =\endtrivlist
\def\arrax{\@verbatim \frenchspacing\@vobeyspaces \@arraxverbatim
You are using a type of "array" construct that is only allowed in AmS-LaTeX.}
\def\tabulax{\@verbatim \frenchspacing\@vobeyspaces \@tabulaxverbatim
You are using a type of "tabular" construct that is only allowed in AmS-LaTeX.}
\let\csname endarrax*\endcsname =\endtrivlist
\let\csname endtabulax*\endcsname =\endtrivlist
\def\@@eqncr{\let\@tempa\relax
    \ifcase\@eqcnt \def\@tempa{& & &}\or \def\@tempa{& &}%
      \else \def\@tempa{&}\fi
     \@tempa
     \if@eqnsw
        \iftag@
           \@taggnum
        \else
           \@eqnnum\stepcounter{equation}%
        \fi
     \fi
     \global\tag@false
     \global\@eqnswtrue
     \global\@eqcnt\z@\cr}
 \def\endequation{%
     \ifmmode\ifinner 
      \iftag@
        \addtocounter{equation}{-1} 
        $\hfil
           \displaywidth\linewidth\@taggnum\egroup \endtrivlist
        \global\tag@false
        \global\@ignoretrue   
      \else
        $\hfil
           \displaywidth\linewidth\@eqnnum\egroup \endtrivlist
        \global\tag@false
        \global\@ignoretrue 
      \fi
     \else   
      \iftag@
        \addtocounter{equation}{-1} 
        \eqno \hbox{\@taggnum}
        \global\tag@false%
        $$\global\@ignoretrue
      \else
        \eqno \hbox{\@eqnnum}
        $$\global\@ignoretrue
      \fi
     \fi\fi
 } 
 \newif\iftag@ \tag@false
 \def\tag{\@ifnextchar*{\@tagstar}{\@tag}}
 \def\@tag#1{%
     \global\tag@true
     \global\def\@taggnum{(#1)}}
 \def\@tagstar*#1{%
     \global\tag@true
     \global\def\@taggnum{#1}%
}
\theoremstyle{definition}
\theoremstyle{remark}
\numberwithin{equation}{section}
\begin{document}
\title[Embeddings of weighted spaces and CKN inequalities]{Embeddings of weighted Sobolev spaces and generalized
Caffarelli-Kohn-Nirenberg inequalities}
\author{Patrick J. Rabier}
\address{Department of Mathematics, University of Pittsburgh, Pittsburgh, PA 15260}
\email{rabier@imap.pitt.edu}
\thanks{I am grateful to Prof. V. Maz'ya for valuable information about the early
historical development of the subject and other connections with the
existing literature. }
\subjclass{46E35}
\keywords{Weighted Sobolev space, embedding, Hardy-type inequalities, CKN inequalities.}
\maketitle

\begin{center}
(to appear in J. Analyse Math\'{e}matique)
\end{center}

\begin{abstract}
We characterize all the real numbers $a,b,c$ and $1\leq p,q,r<\infty $ such
that the weighted Sobolev space $W_{\{a,b\}}^{(q,p)}(\Bbb{R}^{N}\backslash
\{0\}):=\{u\in L_{loc}^{1}(\Bbb{R}^{N}\backslash \{0\}):|x|^{\frac{a}{q}%
}u\in L^{q}(\Bbb{R}^{N}),|x|^{\frac{b}{p}}\nabla u\in (L^{p}(\Bbb{R}%
^{N}))^{N}\}$ is continuously \thinspace embedded \thinspace into \linebreak 
$L^{r}(\Bbb{R}^{N};|x|^{c}dx)$\noindent $:=\{u\in L_{loc}^{1}(\Bbb{R}%
^{N}\backslash \{0\}):|x|^{\frac{c}{r}}u\in L^{r}(\Bbb{R}^{N})\},$ with norm 
$||\cdot ||_{c,r}.$

Except when $N\geq 2$ and $a=c=b-p=-N,$ it turns out that this embedding is
equivalent to the multiplicative inequality $||u||_{c,r}\leq C||\nabla
u||_{b,p}^{\theta }||u||_{a,q}^{1-\theta }$ for some suitable $\theta \in
[0,1],$ often but not always unique. If $a,b,c>-N,$ then $C_{0}^{\infty }(%
\Bbb{R}^{N})\subset W_{\{a,b\}}^{(q,p)}(\Bbb{R}^{N}\backslash \{0\})\cap
L^{r}(\Bbb{R}^{N};|x|^{c}dx)$ and such inequalities for $u\in C_{0}^{\infty %
}(\Bbb{R}^{N})$ are the well-known Caffarelli-Kohn-Nirenberg inequalities,
but their generalization to $W_{\{a,b\}}^{(q,p)}(\Bbb{R}^{N}\backslash \{0\})
$ cannot be proved by a denseness argument. Without assuming $a,b,c>-N,$ the
inequalities are essentially new even when $u\in C_{0}^{\infty }(\Bbb{R}%
^{N}\backslash \{0\}),$ although a few special cases are known, most notably
the Hardy-type inequalities when $p=q.$

In a different direction, the embedding theorem easily yields a
generalization when the weights $|x|^{a},|x|^{b}$ and $|x|^{c}$ are replaced
by more general weights $w_{a},w_{b}$ and $w_{c},$ respectively, having
multiple power-like singularities at finite distance and at infinity.
\end{abstract}

\section{Introduction\label{intro}}

If $d\in \Bbb{R}$ and $1\leq s<\infty ,$ let $||\cdot ||_{d,s}$ denote the
norm of the space $L^{s}(\Bbb{R}^{N};|x|^{d}dx),$ where the $|x|^{d}dx$
-measure of $\{0\}$ is defined to be $0$ (which must be specified if $d\leq
-N$). With this definition, $u\in L^{s}(\Bbb{R}^{N};|x|^{d}dx)$ if and only
if $|x|^{\frac{d}{s}}u\in L^{s}(\Bbb{R}^{N})$ and $||u||_{d,s}=||\,|x|^{%
\frac{d}{s}}u||_{s},$ where $||\cdot ||_{s}:=||\cdot ||_{0,s}.$ Throughout
the paper, $\Bbb{R}_{*}^{N}:=\Bbb{R}^{N}\backslash \{0\}.$

Given $a,b\in \Bbb{R}$ and $1\leq p,q<\infty ,$ consider the weighted
Sobolev space 
\begin{multline}
W_{\{a,b\}}^{1,(q,p)}(\Bbb{R}_{*}^{N}):=  \label{-1} \\
\{u\in L_{loc}^{1}(\Bbb{R}_{*}^{N}):u\in L^{q}(\Bbb{R}^{N};|x|^{a}dx),\quad
\nabla u\in (L^{p}(\Bbb{R}^{N};|x|^{b}dx))^{N}\},
\end{multline}
equipped with the norm 
\begin{equation}
||\,u||_{a,q}+||\,\nabla u||_{b,p}.  \label{0}
\end{equation}
Since $W_{\{a,b\}}^{1,(q,p)}(\Bbb{R}_{*}^{N})$ may contain functions which
are not locally integrable near $0$ and hence not distributions on $\Bbb{R}
^{N},$ it is generally larger than the space $W_{\{a,b\}}^{1,(q,p)}(\Bbb{R}%
^{N})$ (self-explanatory notation) which, incidentally, is not always
complete.

In this paper, we characterize all the real numbers $a,b,c$ and $1\leq
p,q,r<\infty $ such that 
\begin{equation*}
W_{\{a,b\}}^{1,(q,p)}(\Bbb{R}_{*}^{N})\hookrightarrow L^{r}(\Bbb{R}%
^{N};|x|^{c}dx),
\end{equation*}
where ``$\hookrightarrow $'' denotes continuous embedding. This provides
sufficient conditions for $W_{\{a,b\}}^{1,(q,p)}(\Bbb{R}^{N})\hookrightarrow
L^{r}(\Bbb{R}^{N};|x|^{c}dx),$ but their necessity is not investigated.

In spite of the large literature devoted to embeddings of weighted Sobolev
spaces, there seems to be little that addresses and resolves the exact same
question in special cases$.$ While most results allow for weights satisfying
general properties, they also incorporate a number of restrictive hypotheses
which are rarely necessary. Only a few are applicable to the whole -or
punctured- space and even fewer accommodate weights which, like all
nontrivial power weights, exhibit singularities at $0$ and infinity
simultaneously. This is especially true when more than one weight (here, $%
a\neq b$) or more than one order of integration (i.e., $p\neq q$) is
involved in the source space. In addition, the weighted spaces are often
defined to be the unknown closure of some subspace of smooth (enough)
functions, as indeed the denseness issue is a notorious difficulty (\cite
{Zh98}). In particular, this is the definition chosen in \cite{Ma85} (see
also the more recent and expanded book \cite{Ma11}), except in the
unweighted case.

Before continuing this discussion, we shall state the embedding theorem. In
addition to the standard notation 
\begin{equation*}
p^{*}=\infty \text{ if }p\geq N\text{ and }p^{*}=\frac{Np}{N-p}\text{ if }
1\leq p<N,
\end{equation*}
we denote by $c^{0}$ and $c^{1}$ the two points 
\begin{equation}
c^{0}:=\frac{r(a+N)}{q}-N\qquad \text{and\qquad }c^{1}:=\frac{r(b-p+N)}{p}-N,
\label{1}
\end{equation}
where it is understood that $a,b,p,q$ and $r$ are given. The points $c^{0}$
and $c^{1}$ are distinct if and only if $\frac{a+N}{q}\neq \frac{b-p+N}{p}.$
If so and if $c$ is in the closed interval with endpoints $c^{0}$ and $%
c^{1}, $ we set 
\begin{equation}
\theta _{c}:=\frac{c-c^{0}}{c^{1}-c^{0}},  \label{2}
\end{equation}
so that $\theta _{c}\in [0,1]$ and that 
\begin{equation}
c=\theta _{c}c^{1}+(1-\theta _{c})c^{0}.  \label{3}
\end{equation}
Observe that $\theta _{c^{0}}=0$ and $\theta _{c^{1}}=1$ and that, by (\ref
{1}), (\ref{2}) and (\ref{3}), 
\begin{equation}
\frac{c+N}{r}=\theta _{c}\frac{b-p+N}{p}+(1-\theta _{c})\frac{a+N}{q}.
\label{4}
\end{equation}

\begin{theorem}
\smallskip \label{th0} Let $a,b,c\in \Bbb{R}$ and $1\leq p,q,r<\infty $ be
given ($1\leq p<\infty $ and $0<q,r<\infty $ if $N=1$). Then, $%
W_{\{a,b\}}^{1,(q,p)}(\Bbb{R}_{*}^{N})\hookrightarrow L^{r}(\Bbb{R}%
^{N};|x|^{c}dx)$ (and hence $W_{\{a,b\}}^{1,(q,p)}(\Bbb{R}%
_{*}^{N})\hookrightarrow W_{\{c,b\}}^{1,(r,p)}(\Bbb{R}_{*}^{N})$) if and
only if $r\leq \max \{p^{*},q\}$ and one of the following conditions%
\footnote{%
The overlap between conditions (iii), (iv) and (v) makes for a simpler and
clearer statement.} holds:\newline
(i) $a$ and $b-p$ are on the same side of $-N$ (including $-N$), $\frac{a+N}{%
q}\neq \frac{b-p+N}{p},$ $c$ is in the open interval with endpoints $c^{0}$
and $c^{1}$ and $\theta _{c}\left( \frac{1}{p}-\frac{1}{N}-\frac{1}{q}%
\right) \leq \frac{1}{r}-\frac{1}{q}.$\newline
(ii) $a$ and $b-p$ are strictly on opposite sides of $-N$ (hence $\frac{a+N}{%
q}\neq \frac{b-p+N}{p}$), $c$ is in the open interval with endpoints $c^{0}$
and $-N$ and $\theta _{c}\left( \frac{1}{p}-\frac{1}{N}-\frac{1}{q}\right)
\leq \frac{1}{r}-\frac{1}{q}.$\newline
(iii) $r=q$ and $c=a$ ($=c^{0}$).\newline
(iv) $p\leq r\leq p^{*},a\leq -N$ and $b-p<-N$ or $a\geq -N$ and $%
b-p>-N,c=c^{1}.$\newline
(v) ($\max \{p^{*},q\}\geq $) $r\geq \min \{p,q\},\frac{a+N}{q}=\frac{b-p+N}{%
p}\neq 0$ and $c=c^{1}$ ($=c^{0}$).\newline
(vi) $a=-N,b=p-N,q<r\leq p^{*}$ and $c=c^{1}$ ($=c^{0}=-N$).
\end{theorem}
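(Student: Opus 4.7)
The plan is to prove necessity via scaling and localization arguments, and sufficiency by reducing the desired estimate to a weighted inequality on a cylinder via logarithmic coordinates.

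For \emph{necessity}, I would first apply the dilation $u_\lambda(x):=u(\lambda x)$, which rescales the three relevant norms by $\|u_\lambda\|_{a,q}=\lambda^{-(a+N)/q}\|u\|_{a,q}$, $\|\nabla u_\lambda\|_{b,p}=\lambda^{-(b-p+N)/p}\|\nabla u\|_{b,p}$, and $\|u_\lambda\|_{c,r}=\lambda^{-(c+N)/r}\|u\|_{c,r}$. Letting $\lambda\to 0$ and $\lambda\to\infty$ in the embedding inequality forces $(c+N)/r$ to lie in the closed interval between $(a+N)/q$ and $(b-p+N)/p$, thereby placing $c$ between $c^{0}$ and $c^{1}$ and yielding the identity (\ref{4}) defining $\theta_c$. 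Applying the embedding next to a fixed bump function translated to a point $y$ with $|y|$ bounded away from $0$ reduces the weights to essentially constants on a small ball, whence the resulting unweighted Gagliardo--Nirenberg interpolation forces both $r\leq\max\{p^*,q\}$ and $\theta_c(1/p-1/N-1/q)\leq 1/r-1/q$. The dichotomies distinguishing cases (i), (ii), (iv) come from testing against radial model functions such as $|x|^{-\alpha}\chi_{|x|\leq 1}$ and $|x|^{-\beta}\chi_{|x|\geq 2}$: tuning $\alpha,\beta$ on either side of the critical exponents $-(a+N)/q$ or $-(b-p+N)/p$ produces functions lying in the source space, and whether they also lie in the target constrains $c$ relative to $-N$.

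For \emph{sufficiency}, the fundamental reduction is the substitution $x=e^{t}\omega$, $\omega\in S^{N-1}$, $t\in\mathbb{R}$. Writing $v(t,\omega):=u(e^{t}\omega)$ and using $|\nabla u|^{2}=e^{-2t}(|\partial_{t}v|^{2}+|\nabla_{\omega}v|^{2})$, each weighted norm converts to an exponentially weighted integral on the cylinder: for instance
\begin{equation*}
\|u\|_{c,r}^{r}=\int_{\mathbb{R}\times S^{N-1}}e^{t(c+N)}|v|^{r}\,dt\,d\omega,
\end{equation*}
and analogously for the source norms. By (\ref{4}), the target multiplicative inequality becomes translation-invariant in $t$. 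I would then decompose $v=\bar v(t)+(v-\bar v)$ into its spherical average and its mean-zero part. The mean-zero component is controlled pointwise in $t$ by the classical Sobolev--Poincar\'e inequality on the compact manifold $S^{N-1}$, which yields the full $r\leq p^*$ range after integration against the exponential weights. The spherical average $\bar v(t)$ is a function of a single variable, for which the inequality reduces to a weighted one-dimensional Gagliardo--Nirenberg estimate, provable via the fundamental theorem of calculus, H\"older's inequality, and one-dimensional Hardy inequalities, with the sign of $c+N$ dictating the direction of integration.

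The main obstacle is unifying these ingredients into a single argument covering all six cases, particularly the endpoints $\theta_c\in\{0,1\}$ and the scale-invariant case $(a+N)/q=(b-p+N)/p$. Case (i), where $a+N$ and $b-p+N$ have the same sign, is generic: both exponential weights concentrate mass at the same end of $\mathbb{R}$, so the entire open interval between $c^{0}$ and $c^{1}$ is admissible. Case (ii) is harder because the weights concentrate at opposite ends, meaning that each source norm controls $\bar v$ on only one half-line, forcing the admissible range of $c$ to shrink to the open interval between $c^{0}$ and $-N$ and requiring the one-dimensional estimate to be split at an intermediate point. Cases (iv), (v), (vi) correspond to $\theta_c\in\{0,1\}$ or $c^{0}=c^{1}$, where the multiplicative inequality degenerates to a pure Hardy-type estimate; case (vi), with $a=-N=b-p$, is the critical scale-invariant endpoint which requires a logarithmic refinement of Hardy's inequality. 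Finally, because $C_{0}^{\infty}(\mathbb{R}_*^{N})$ is not in general dense in $W_{\{a,b\}}^{1,(q,p)}(\mathbb{R}_*^{N})$, the passage from smooth functions to arbitrary elements of the source space must be carried out directly, e.g.\ by truncation and mollification away from the origin combined with Fatou's lemma.
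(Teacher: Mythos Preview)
Your necessity sketch is sound and matches the paper: scaling places $c$ in $[c^{0},c^{1}]$ and upgrades the embedding to the multiplicative form (Corollary~\ref{cor2}); translated bumps give $\theta_c(\tfrac1p-\tfrac1N-\tfrac1q)\le\tfrac1r-\tfrac1q$ and $r\le\max\{p^*,q\}$ (Theorem~\ref{th3}); explicit radial test functions handle the sign dichotomies and endpoint exclusions (Theorem~\ref{th1}).

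The sufficiency argument has a genuine gap. Your cylinder decomposition $v=\bar v+(v-\bar v)$ is equivalent to the paper's splitting $u=u_S+(u-u_S)$, and ``Sobolev--Poincar\'e on $S^{N-1}$, then integrate in $t$'' is precisely the CKN Lemma (Lemma~\ref{lm19}). But the integration step is not free: after the pointwise-in-$t$ estimate one must pass from $\sum_k A_k^{\alpha}B_k^{\beta}$ to $(\sum A_k)^{\alpha}(\sum B_k)^{\beta}$, and this holds only when $\alpha+\beta\ge 1$, i.e.\ when
\[
\frac{\theta_c r}{p}+\frac{(1-\theta_c)r}{q}\ge 1
\]
(condition (iii) of Lemma~\ref{lm19}, equivalently (i-{\scriptsize 2}) of Corollary~\ref{cor20}). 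This fails outright whenever $r<\min\{p,q\}$, and also for $\theta_c$ near $0$ when $r<q$ (or near $1$ when $r<p$). Your proposal asserts that integration against the exponential weights ``yields the full $r\le p^*$ range,'' but it does not; the paper explicitly flags this obstruction after Corollary~\ref{cor20}.

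The paper's remedy is not a refinement of the linear splitting but a different idea: in the regime $1\le r\le\min\{p,q\}$ it replaces $u$ by the \emph{nonlinear} symmetrization $[(|u|^{r})_{S}]^{1/r}$ (Lemma~\ref{lm17}), which preserves $\|u\|_{c,r}$ exactly while not increasing $\|u\|_{a,q}$ or $\|\partial_\rho u\|_{b,p}$, thereby reducing to the radial case (Theorem~\ref{th18}) with no summability hypothesis. In the remaining ranges ($p<r\le q$, $q<r<p$) the paper uses the CKN Lemma only where the summability condition does hold and bootstraps the rest through Theorem~\ref{th18} with shifted parameters (Lemma~\ref{lm23}, Theorem~\ref{th27}). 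Your outline supplies no substitute for this mechanism, so as written the sufficiency proof would stall on an open set of admissible $(p,q,r,\theta_c)$.
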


\smallskip Since $r$ is finite, $r=p^{*}$ is impossible when $p\geq N.$ The
set of admissible values of $c$ is an interval (possibly $\emptyset ,$ see
Remark \ref{rm1}), of which $c^{0},c^{1}$ and $-N$ may or may not be
endpoints, but never interior points. When $c^{0}$ or $c^{1}$ are endpoints,
their admissibility is decided by parts (iii) to (vi). Endpoints other than $%
c^{0},$ $c^{1}$ or $-N$ are always admissible, but $-N$ is never admissible
when $a\neq -N.$ If $a=-N,$ then $-N$ is admissible only in the trivial case
(iii) and the exceptional case (vi).

\smallskip Apparently, aside from the trivial part (iii), only parts (v) and
(vi) of Theorem \ref{th0} when $q=p$ (hence $a=b-p$) are known with
nontrivial weights. See Opic and Kufner \cite[p. 291]{OpKu90}, where the
result is credited to Opic and Gurka \cite{OpGu89}. Curiously, if $b-p\neq
-N $ and $a_{q}:=\frac{q(b-p+N)}{p}-N,$ part (v) shows that the space $%
W_{\{a_{q},b\}}^{1,(q,p)}(\Bbb{R}_{*}^{N})$ is independent of $q\in
[p,p^{*}],q<\infty ,$ with equivalent norms as $q$ is varied. When $N=1,$
part (iv) can -and will- be deduced from an inequality of Bradley \cite{Br78}%
. Related, but different, work is discussed further below.

In the unweighted case $a=b=c=0$ and if $p=q$ and $N\geq 2$ (a minor point),
Theorem \ref{th0} gives again $W^{1,p}(\Bbb{R}_{*}^{N})=W^{1,p}(\Bbb{R}
^{N})\hookrightarrow L^{r}(\Bbb{R}^{N})$ if and only if ($r<\infty $ and) $%
p\leq r\leq p^{*}$ (Subsection \ref{ex1}). If $p\neq q$ (and still $a=b=c=0$%
), Theorem \ref{th0} is akin to embedding theorems in \cite{BeIlNi78}, \cite
{BeIlNi79}.

\begin{remark}
\label{rm1}If $r\leq \min \{p^{*},q\},$ then $\theta _{c}\left( \frac{1}{p}-%
\frac{1}{N}-\frac{1}{q}\right) \leq \frac{1}{r}-\frac{1}{q}$ for every $c$
between $c^{0}$ and $c^{1}.$ In contrast, all the conditions of Theorem \ref
{th0} fail (i.e., no embedding holds for any $c$) if $p<N$ and $r>\max
\{p^{*},q\}$ or if either (i) $p<N,r=p^{*}>q,b-p=-N\neq a$ or (ii) $q<r\leq
p^{*},a$ and $b-p$ are strictly on opposite sides of $-N$ (hence $\theta
_{-N}$ is defined) and $\theta _{-N}\left( \frac{1}{p}-\frac{1}{N}-\frac{1}{q%
}\right) \geq \frac{1}{r}-\frac{1}{q}.$
\end{remark}

\smallskip When $\frac{a+N}{q}\neq \frac{b-p+N}{p},$ a simple rescaling
shows (Corollary \ref{cor2}) that the embedding $W_{\{a,b\}}^{1,(q,p)}(\Bbb{R%
}_{*}^{N})\hookrightarrow L^{r}(\Bbb{R}^{N};|x|^{c}dx)$ is equivalent to the
multiplicative inequality 
\begin{equation}
||u||_{c,r}\leq C||\nabla u||_{b,p}^{\theta _{c}}||u||_{a,q}^{1-\theta _{c}},
\label{5}
\end{equation}
rather than just $||u||_{c,r}\leq C\left( ||u||_{a,q}+||\nabla
u||_{b,p}\right) .$

When $a,b,c>-N$ and $u\in C_{0}^{\infty }(\Bbb{R}^{N}),$ (\ref{5}) is one of
the well-known Caffarelli-Kohn-Nirenberg (CKN for short) inequalities in 
\cite{CaKoNi84}. Therefore, parts (i) and (ii) of Theorem \ref{th0} give
necessary and sufficient conditions for the validity of the CKN inequality (%
\ref{5}) when $\frac{a+N}{q}\neq \frac{b-p+N}{p},$ but without the
restriction $a,b,c>-N$ and for $u\in W_{\{a,b\}}^{1,(q,p)}(\Bbb{R}_{*}^{N}).$
Note that $C_{0}^{\infty }(\Bbb{R}^{N})\subset W_{\{a,b\}}^{1,(q,p)}(\Bbb{R}%
_{*}^{N})$ when $a,b>-N,$ so that even in this case, (\ref{5}) is a genuine
generalization. As already pointed out, it does not follow by a denseness
argument without many extra conditions ($\Bbb{R}_{*}^{N}$ replaced by $\Bbb{R%
}^{N},p=q,$ $a=b$ and $|x|^{a}$ an $A_{p}$ weight, i.e. $-N<a<(p-1)N;$ see 
\cite[Theorem 1.27]{HeKiMa93} or \cite{NaToYa04}). The denseness of $%
C_{0}^{\infty }(\Bbb{R}^{N})$ is obviously meaningless when $a\leq -N$ or $%
b\leq -N$ while that of $C_{0}^{\infty }(\Bbb{R}_{*}^{N}),$ always contained
in $W_{\{a,b\}}^{1,(q,p)}(\Bbb{R}_{*}^{N}),$ is generally false (see
Subsection \ref{ex3}) and hence definitely not a viable approach.

Inequalities of CKN type have been discussed earlier, beginning with the
1961 work of Il'in \cite[Theorem 1.4]{Il61}, who proved (with $c^{1}$ given
by (\ref{1})) $||u||_{c^{1},r,G}\leq C||\nabla u||_{b,p,\Omega }$ when $%
\Omega $ is a fairly general open subset of $\Bbb{R}^{N},$ $G$ is a bounded
measurable subset of $\Omega $ and $u$ is $C^{1}.$ There are further
limitations about $b,p$ and $r,$ but the result has various generalizations
when higher order derivatives are involved, or when $G$ is a bounded subset
of a section of $\Omega $ by a lower-dimensional hyperplane. Results of a
somewhat similar nature are proved in \cite[Section 2.1.6]{Ma85}, \cite{Ma11}
when $\Omega =\Bbb{R}^{N}$ and $u\in C_{0}^{\infty }(\Bbb{R}^{N}).$

When $\Omega $ is an open subset of $\Bbb{R}^{N},\mu $ and $\nu $ are
nonnegative Borel measures, $\Phi \geq 0$ is continuous and positively
homogeneous of degree $1$ in its second argument and $\frac{1}{r}\leq \frac{%
\theta }{p}+\frac{1-\theta }{q},$ Maz'ya \cite[Theorem 9]{Ma73} (reproduced
in \cite[p.127]{Ma85} and \cite{Ma11}) gives interesting\emph{\ necessary
and sufficient} conditions for the inequality 
\begin{equation}
||u||_{L^{r}(\Omega ;\mu )}\leq C\left( \int_{\Omega }\Phi (x,\nabla
u)^{p}dx\right) ^{\frac{\theta }{p}}||u||_{L^{q}(\Omega ;\nu )}^{1-\theta },
\label{6}
\end{equation}
to hold for $u\in C_{0}^{\infty }(\Omega ).$ When $\Omega =\Bbb{R}%
_{*}^{N},\mu (E)=\int_{E}|x|^{c}dx,\nu (E)=\int_{E}|x|^{a}dx$ and $\Phi
(x,y)=|x|^{\frac{b}{p}}|y|,$ the setting of Theorem \ref{th0} is recovered.

Maz'ya's conditions for (\ref{6}) are expressed in terms of the $(p,\Phi )$%
-capacity of ``admissible'' sets and their $\mu $ and $\nu $ measures. As
early as 1960, he noted in \cite{Ma60} that such conditions could be used to
prove the equivalence between various inequalities (e.g., Sobolev and Nash).
This kind of equivalence has since been revisited by a number of authors.
For example, when $a=c,$ it follows from Bakry \textit{et al}. \cite
{BaCoLeSa95} that if the inequality $||u||_{a,r}\leq C||\nabla
u||_{b,p}^{\theta }||u||_{a,q}^{1-\theta }$ holds when $q=q_{0},r=r_{0},%
\theta =\theta _{0}$ and (say) $u$ is a Lipschitz continuous function with
compact support, then the same inequality continues to hold for a family of
other values of $q,r$ and $\theta .$ Once again, denseness issues are an
obstacle to extending this property to the spaces $W_{\{a,b\}}^{1,(q,p)}(%
\Bbb{R}_{*}^{N})$ unless $a=b=c=0$ (unweighted case).

The connection of this work with the CKN inequalities can be found in some
of the preliminary results in \cite{CaKoNi84} which, possibly in generalized
form, are also useful for the proof of Theorem \ref{th0}. However, without
the compactness of the supports and other key assumptions, a mere tweaking
of the arguments of \cite{CaKoNi84} is not possible.

In the next section, we show that (\ref{5}) is equivalent to an embedding
inequality and that the hypotheses of Theorem \ref{th0} are necessary. The
necessity of $r\leq \max \{p^{*},q\}$ and of $\theta _{c}\left( \frac{1}{p}-%
\frac{1}{N}-\frac{1}{q}\right) \leq \frac{1}{r}-\frac{1}{q}$ in parts (i)
and (ii) of Theorem \ref{th0} follows very simply from (\ref{5}) and a
remark in \cite{CaKoNi84} used here in a more general framework (Theorem \ref
{th3} (i)). A variant of it proves the necessity of $r\leq \max \{p^{*},q\}$
in the remaining cases (Theorem \ref{th3} (ii)).

The verification of the sufficiency is demanding. The general idea is first
to prove Theorem \ref{th0} for radially symmetric functions. Once this is
done, there are two different ways to proceed. The first one is to reduce
the problem to the symmetric case by a \emph{suitable} radial
symmetrization. This works when $1\leq r\leq \min \{p,q\}.$ The second
option is to prove an independent embedding theorem for a direct complement
of the subspace of radially symmetric functions. This can be done, based on
ideas in \cite{CaKoNi84}, under assumptions about $p,q$ and $r$ that rule
out $r<\min \{p,q\}.$ This is why it is crucial that this case can be
settled by other arguments.

The proof of the embedding theorem for radially symmetric functions and,
next, by radial symmetrization, requires some preliminaries. It is more
natural to work with the larger spaces (the domain $\Bbb{R}_{*}^{N}$ is not
mentioned for simplicity) 
\begin{multline}
\widetilde{W}_{\{a,b\}}^{1,(q,p)}:=  \label{7} \\
\{u\in L_{loc}^{1}(\Bbb{R}_{*}^{N}):u\in L^{q}(\Bbb{R}^{N};|x|^{a}dx),\quad
\partial _{\rho }u\in L^{p}(\Bbb{R}^{N};|x|^{b}dx)\},
\end{multline}
equipped with the norm 
\begin{equation}
||u||_{\{a,b\},(q,p)}:=||u||_{a,q}+||\,\partial _{\rho }u||_{b,p},  \label{8}
\end{equation}
where $\partial _{\rho }u:=\nabla u\cdot \frac{x}{|x|}$ is the radial
derivative of $u.$ Since $|x|^{-1}x$ is a smooth field on $\Bbb{R}_{*}^{N},$
this definition makes sense for every distribution $u$ on $\Bbb{R}_{*}^{N}.$

When $0<q<1,$ the definitions (\ref{-1}) and (\ref{7}) can still be used,
but (\ref{0}) and (\ref{8}) are only quasi-norms. The equivalence between
continuity and boundedness for linear operators remains true in quasi-normed
spaces. For more details about such spaces, see \cite{BeLi00} or \cite{Ro72}.

The spaces $W_{\{a,b\}}^{1,(q,p)}(\Bbb{R}_{*}^{N})$ and $\widetilde{W}
_{\{a,b\}}^{1,(q,p)}$ contain the same radially symmetric functions and the
induced (quasi) norms are the same, because $\nabla u=(\partial _{\rho }u)%
\frac{x}{|x|}$ when $u$ is radially symmetric. Thus, when referring to
radially symmetric functions, the ambient space $W_{\{a,b\}}^{1,(q,p)}(\Bbb{R%
}_{*}^{N})$ or $\widetilde{W}_{\{a,b\}}^{1,(q,p)}$ is unimportant.

In the next section, the basic features of a related space $\widetilde{W}
_{loc}^{1,p}(\Bbb{R}_{*}^{N})$ (abbreviated $\widetilde{W}_{loc}^{1,p}$) are
discussed, along with some of their implications regarding $\widetilde{W}
_{\{a,b\}}^{1,(q,p)}.$ This material is directly relevant to the proof of
the main results of Sections \ref{radial} and \ref{embedding1}.

Necessary and sufficient conditions for the continuity of the embedding of
the subspace of radially symmetric functions when $q,r>0$ and $p\geq 1$ are
given in Theorem \ref{th16}. Of course, this is a (barely) disguised form of
Theorem \ref{th0} when $N=1.$ Compared with the treatment of the same
problem in \cite{CaKoNi84}, convenient tools (e.g., radial integration by
parts) cannot be used and some estimates (e.g., of $|u(0)|$) make no longer
sense. For that reason, our approach is technically completely different.

The proof of Theorem \ref{th0} for arbitrary $N$ begins in Section \ref
{embedding1}, where the case $1\leq r\leq \min \{p,q\}$ is considered. As
mentioned before, this is done by radial symmetrization, though not in the
obvious way (Lemma \ref{lm17}). The result (Theorem \ref{th18}) is more
general and sharper than the corresponding part of Theorem \ref{th0} since
it establishes the continuous embedding of the larger space $\widetilde{W}
_{\{a,b\}}^{1,(q,p)},$ with a weaker norm, into $L^{r}(\Bbb{R}
^{N};|x|^{c}dx) $ under the conditions already necessary for the embedding
of $W_{\{a,b\}}^{1,(q,p)}(\Bbb{R}_{*}^{N}).$ Thus, the embedding is obtained
without assuming the integrability of the first derivatives, except for just
the radial one.

The case when $r>\min \{p,q\}$ is split into the three parts: $p<r\leq q$
(Theorem \ref{th22}), $r>q$ and $r\geq p$ (Theorem \ref{th26}) and $q<r<p$
(Theorem \ref{th27}). If $p=q,$ Sections \ref{embedding2} and \ref
{embedding4} can be skipped with no prejudice. A preliminary embedding lemma
for functions with null radial symmetrization, essentially due to
Caffarelli, Kohn and Nirenberg, is proved in Section \ref{CKNlemma} (Lemma 
\ref{lm19}), then rephrased in a more convenient way (Corollary \ref{cor20}%
). The technical steps are simple, but cannot be repeated with the larger
space $\widetilde{W}_{\{a,b\}}^{1,(q,p)}.$ The proofs of Theorem \ref{th22}
(when $p<r\leq q$) and Theorem \ref{th27} (when $1\leq q<p<r$) also heavily
rely on Theorem \ref{th18} (when $1\leq r\leq \min \{p,q\},$ but with other
parameters).

\smallskip The relationship between Theorem \ref{th0} and the CKN
inequalities does not stop with (\ref{5}) when $\frac{a+N}{q}\neq \frac{b-p+N%
}{p}:$ In Section \ref{CKNinequalities}, we show that the embedding $%
W_{\{a,b\}}^{1,(q,p)}(\Bbb{R}_{*}^{N})\hookrightarrow L^{r}(\Bbb{R}%
^{N};|x|^{c}dx)$ continues to be equivalent to a multiplicative inequality $%
||u||_{c,r}\leq C||\nabla u||_{b,p}^{\theta }||u||_{a,q}^{1-\theta }$ for
some suitable $\theta \in [0,1]$ when $\frac{a+N}{q}=\frac{b-p+N}{p}$
(Theorem \ref{th29}), except when $N\geq 2$ and $a=c=b-p=-N$ (Theorem \ref
{th30}). Of course, $\theta $ is no longer $\theta _{c}$ in (\ref{2}), which
is not defined, and it may not always be unique (Remark \ref{rm5}) When $%
\theta =1,$ this is an $N$-dimensional weighted Hardy inequality more
general than those in the current literature (\cite{GaGuWh85}, \cite{OpKu90}%
). The case when $u\in C_{0}^{\infty }(\Bbb{R}_{*}^{N}),p=q=r=2,c=\frac{a+b}{%
2}-1$ and $\theta =\frac{1}{2}$ was recently investigated by Catrina and
Costa \cite{CaCo09}.

In Section \ref{examples}, three special cases are discussed and the
(simple) generalization when $|x|^{a},|x|^{b}$ and $|x|^{c}$ are replaced by
weights $w_{a},w_{b}$ and $w_{c}$ having multiple power-like singularities
is briefly sketched.

\subsection{Notation\label{notation}}

Throughout the paper, $C>0$ denotes a constant which, as is customary, may
have different values in different places. If $k\geq 1$ is a real number, $%
k^{\prime }\leq \infty $ will always denote the H\"{o}lder conjugate of $k.$
Also, $\zeta \in C_{0}^{\infty }(\Bbb{R}^{N})$ is chosen once and for all
such that $0\leq \zeta \leq 1$ is radially symmetric, $\zeta (x)=1$ if $%
|x|\leq \frac{1}{2}$ and $\zeta (x)=0$ if $|x|\geq 1.$ Naturally, we shall
also use the notation introduced more formally earlier on. Up to and
including Section \ref{radial}, we shall frequently refer to the Kelvin
transform, defined in the following remark.

\begin{remark}
\label{rm2}The Kelvin transform $x\mapsto $ $|x|^{-2}x$ on $\Bbb{R}_{*}^{N}$
is an isometry from $\widetilde{W}_{\{a,b\}}^{1,(q,p)}$ onto $\widetilde{W}%
_{\{-2N-a,2p-2N-b\}}^{1,(q,p)}$ and from $L^{r}(\Bbb{R}^{N};|x|^{c}dx)$ onto 
$L^{r}(\Bbb{R}^{N};|x|^{-2N-c}dx)$ for all values of the parameters. As a
result, in many proofs that split into two complementary cases, it will be
enough to discuss only one of them, because the other follows from this
isometry.
\end{remark}

\section{Necessary conditions for continuous embedding\label{necessary}}

In this section, we prove that the conditions given in Theorem \ref{th0} are
necessary.

\begin{theorem}
\label{th1}Let $a,b,c\in \Bbb{R}$ and $1\leq p<\infty ,0<q,r<\infty $ be
given. Then, $W_{\{a,b\}}^{1,(q,p)}(\Bbb{R}_{*}^{N})$ (hence \textit{a
fortiori} $\widetilde{W}_{\{a,b\}}^{1,(q,p)}$) is not contained $L^{r}(\Bbb{R%
}^{N};|x|^{c}dx)$ if:\newline
(i) $c$ does not belong to the closed interval with endpoints $c^{0}$ and $%
c^{1}.$\newline
(ii) $b-p\leq -N<a$ or $b-p\geq -N>a$ and $c$ does not belong to the
interval with endpoints $c^{0}$ (included) and $-N$ (not included).\newline
Furthermore, $W_{\{a,b\}}^{1,(q,p)}(\Bbb{R}_{*}^{N})$ (hence \textit{a
fortiori} $\widetilde{W}_{\{a,b\}}^{1,(q,p)}$) is not continuously\footnote{%
In principle at least, that does not rule out $W_{\{a,b\}}^{1,(q,p)}(\Bbb{R}
_{*}^{N})\subset L^{r}(\Bbb{R}^{N};|x|^{c}dx).$} embedded into $L^{r}(\Bbb{R}%
^{N};|x|^{c}dx)$ if: \newline
(iii) $\frac{a+N}{q}\neq \frac{b-p+N}{p},$ $c=c^{0}$ and $r\neq q$ (if $r=q,$
then $c^{0}=a$ and the embedding is trivial).\newline
(iv) $\frac{a+N}{q}\neq \frac{b-p+N}{p},$ $c=c^{1}$ and $r<p.$\newline
(v) $\frac{a+N}{q}=\frac{b-p+N}{p},r<\min \{p,q\}$ and $c=c^{0}$ ($=c^{1}$).%
\newline
(vi) $a=-N,b=p-N,r<q$ and $c=c^{0}$ ($=c^{1}=-N$).
\end{theorem}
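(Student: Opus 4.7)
I will settle parts (i) and (ii) by producing explicit $u\in W_{\{a,b\}}^{1,(q,p)}(\mathbb{R}_*^N)$ with $\|u\|_{c,r}=\infty$. For (i), I take the radial power $u(x):=|x|^\sigma\zeta(x)$; the three conditions $u\in L^q(|x|^a\,dx)$, $\partial_\rho u\in L^p(|x|^b\,dx)$, and $u\notin L^r(|x|^c\,dx)$ translate into $\sigma>-(a+N)/q$, $\sigma>-(b-p+N)/p$, and $\sigma\le-(c+N)/r$; these are jointly realizable precisely when $c<\min\{c^0,c^1\}$. The symmetric construction $|x|^\sigma(1-\zeta(x))$ at infinity handles $c>\max\{c^0,c^1\}$ (or one may invoke the Kelvin transform of Remark~\ref{rm2}). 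For (ii) in the case $a>-N\ge b-p$, the cut-off $\zeta$ itself lies in $W_{\{a,b\}}^{1,(q,p)}(\mathbb{R}_*^N)$ since $\nabla\zeta$ is supported in an annulus away from $0$ and $a>-N$ controls the $L^q(|x|^a)$ piece, yet $\|\zeta\|_{c,r}^r\ge\int_{|x|\le 1/2}|x|^c\,dx=\infty$ for every $c\le -N$; the dual case $b-p\ge-N>a$ follows by applying Remark~\ref{rm2} to $1-\zeta$.

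For the non-continuity statements (iii)--(vi), my plan is a two-stage scaling argument. \emph{Stage~1:} assuming the embedding holds with constant $C$, I apply it to $u_\lambda(x):=u(\lambda x)$ and use the identities
\[
\|u_\lambda\|_{c,r}=\lambda^{-(c+N)/r}\|u\|_{c,r},\ \ \|u_\lambda\|_{a,q}=\lambda^{-(a+N)/q}\|u\|_{a,q},\ \ \|\nabla u_\lambda\|_{b,p}=\lambda^{-(b-p+N)/p}\|\nabla u\|_{b,p}.
\]
After cancelling the overall factor $\lambda^{-(c+N)/r}$, each term on the right acquires a factor $\lambda^{\pm\kappa}$ with $\kappa:=(b-p+N)/p-(a+N)/q$. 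When $c=c^0$ in (iii) (resp.\ $c=c^1$ in (iv)) and $\kappa\ne 0$, sending $\lambda\to 0$ or $\lambda\to\infty$ in the appropriate direction kills the mismatching term and reduces the embedding to $\|u\|_{c^0,r}\le C'\|u\|_{a,q}$ (resp.\ $\|u\|_{c^1,r}\le C'\|\nabla u\|_{b,p}$); in (v)--(vi), $\kappa=0$ and the full inequality $\|u\|_{c,r}\le C'(\|u\|_{a,q}+\|\nabla u\|_{b,p})$ is retained.

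\emph{Stage~2:} I then test on radial functions $u(x):=h(\log|x|-s_0)|x|^{-\sigma}$, where $h\in C_c^\infty(\mathbb{R})$ is a fixed bump, $s_0\in\mathbb{R}$ is a translation parameter, and $\sigma$ is chosen to match the relevant scaling: $\sigma:=(a+N)/q$ in (iii), (v), (vi), and $\sigma:=(b-p+N)/p$ in (iv). Under the substitution $s=\log|x|$, the three weighted norms of $u$ become, up to $\omega_{N-1}^{1/(\cdot)}$, one-dimensional $L^q$, $L^r$, $L^p$ norms of a translate of $h$, with an extra factor $e^{s_0 p\kappa}$ appearing on exactly the norm whose exponent $\sigma$ fails to match (no such factor arises in (v)--(vi) since $\kappa=0$). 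Driving $s_0\to\pm\infty$ in the sign of $\kappa$ that kills this factor reduces the Stage-1 inequalities to purely one-dimensional ones: $\|h\|_{L^r(\mathbb{R})}\le C\|h\|_{L^q(\mathbb{R})}$ in (iii), $\|h\|_{L^r}\le C\|h'-\sigma h\|_{L^p}$ in (iv), and $\|h\|_{L^r}\le C(\|h\|_{L^q}+\|h'-\sigma h\|_{L^p})$ in (v)--(vi).

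Finally, substituting the dilate $h_\epsilon(t):=h(t/\epsilon)$, the three 1D norms scale as $\epsilon^{1/q}$, $\epsilon^{1/r}$, and (as $\epsilon\to\infty$) $\epsilon^{1/p}|\sigma|\|h\|_{L^p}$ when $\sigma\ne 0$, or $\epsilon^{1/p-1}\|h'\|_{L^p}$ when $\sigma=0$ (which occurs in (vi), forced by $a=b-p=-N$). In each part the hypothesis on $r$---namely $r\ne q$ in (iii), $r<p$ in (iv), $r<\min\{p,q\}$ in (v), $r<q$ in (vi)---places $\epsilon^{1/r}$ strictly on the ``wrong'' side of the surviving exponents, so a suitable limit $\epsilon\to 0$ or $\epsilon\to\infty$ produces the desired contradiction. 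The argument is not conceptually subtle; the main obstacle is bookkeeping, as the three independent scaling parameters ($\lambda,s_0,\epsilon$) and six parameter regimes must be coordinated, and the degenerate case $\sigma=0$ of (vi) requires a separate check because the critical comparison changes from $\epsilon^{1/r}$ vs.\ $\epsilon^{1/p}$ to $\epsilon^{1/r}$ vs.\ $\epsilon^{1/p-1}$.
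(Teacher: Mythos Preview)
Your argument is correct, and for parts (i), (ii), (iii), (v), (vi) it is essentially the paper's own proof: explicit power-times-cutoff test functions for (i)--(ii), the $\lambda$-rescaling to strip off one term in (iii)--(iv), and the substitution $t\mapsto e^{s}$ followed by a dilation of the one-dimensional profile in (v)--(vi).

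The genuine difference is in part (iv). After the Stage~1 reduction to $\|u\|_{c^1,r}\le C\|\nabla u\|_{b,p}$, the paper rewrites this as a weighted Hardy inequality on $(0,\infty)$ and appeals to the literature (Maz'ya, Sinnamon--Stepanov) for its failure when $r<p$, which forces a separate treatment of the cases $a\neq -N$ and $a=-N$. Your log-substitution reduces instead to $\|h\|_{r}\le C\|h'-\sigma h\|_{p}$ on $\mathbb{R}$, and a single dilation $h_\epsilon(t)=h(t/\epsilon)$ with $\epsilon\to\infty$ kills it directly; this is more elementary and avoids both the external references and the case split. Two small remarks: the $s_0$-limit you describe is in fact never needed---once Stage~1 has already removed the extraneous term (in (iii)--(iv)) or once $\kappa=0$ (in (v)--(vi)), every norm of your test function is $s_0$-independent, so you may set $s_0=0$ throughout. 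Also, $\sigma=(b-p+N)/p$ can vanish in (iv) as well (when $b-p=-N$, with $a\ne -N$), not only in (vi); but your $\epsilon^{1/r}$ versus $\epsilon^{1/p-1}$ comparison still succeeds there since $r<p$ gives $1/r>1/p>1/p-1$.
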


\begin{proof}
(i) If $c<\min \left\{ c^{0},c^{1}\right\} ,$ let $u(x):=|x|^{-\frac{c+N}{r}
}\zeta (x)$ with $\zeta $ as in subsection \ref{notation}. Then, $u\notin
L^{r}(\Bbb{R}^{N};|x|^{c}dx)$ since $|x|^{c}|u(x)|^{r}=|x|^{-N}$ on a
neighborhood of $0,$ but $u\in W_{\{a,b\}}^{1,(q,p)}(\Bbb{R}_{*}^{N})$ since 
$\min \left\{ a-\frac{q(c+N)}{r},b-p-\frac{p(c+N)}{r}\right\} >-N$ and $%
\nabla \zeta $ has compact support and vanishes on a neighborhood of $0.$

If $c>\max \{c^{0},c^{1}\},$ let $u(x):=|x|^{-\frac{c+N}{r}}(1-\zeta (x))$
and argue as above, with obvious modifications.

(ii) By Kelvin transform (Remark \ref{rm2}), it suffices to consider $%
b-p\leq -N<a.$ Note that $c^{1}\leq -N<c^{0}$ and let $c\notin \left(
-N,c^{0}\right] .$ By (i), $W_{\{a,b\}}^{1,(q,p)}(\Bbb{R}_{*}^{N})\nsubseteq
L^{q}(\Bbb{R}^{N};|x|^{c}dx)$ if $c>c^{0}.$ If now $c\leq -N,$ then $\zeta
\notin L^{r}(\Bbb{R}^{N};|x|^{c}dx)$ since $\zeta =1$ on a neighborhood of $%
0,$ but $\zeta \in W_{\{a,b\}}^{1,(q,p)}(\Bbb{R}_{*}^{N})$ because $a>-N$
and $\nabla \zeta $ has compact support and vanishes on a neighborhood of $%
0. $

(iii) By contradiction, if $W_{\{a,b\}}^{1,(q,p)}(\Bbb{R}_{*}^{N})$ $%
\hookrightarrow L^{r}(\Bbb{R}^{N};|x|^{c^{0}}dx),$ then $||u||_{c^{0},r}\leq 
$\linebreak $C(||u||_{a,q}+||\nabla u||_{b,p})$ for every $u\in
W_{\{a,b\}}^{1,(q,p)}(\Bbb{R}_{*}^{N}).$ By rescaling and since $\frac{%
c^{0}+N}{r}=\frac{a+N}{q},$ it follows that $||u||_{c^{0},r}\leq
C(||u||_{a,q}+\lambda ^{\frac{a+N}{q}-\frac{b-p+N}{p}}||\nabla u||_{b,p})$
for the same constant $C$ independent of $\lambda >0.$ Since $\frac{a+N}{q}
\neq \frac{b-p+N}{p},$ this yields $||u||_{c^{0},r}\leq C||u||_{a,q}.$ In
particular, if $u(x):=|x|^{-\frac{c^{0}+N-1}{r}}g(|x|)=|x|^{\frac{1}{r}-%
\frac{a+N}{q}}g(|x|)$ with $g\in C_{0}^{\infty }(0,\infty ),$ it follows
that $||g||_{r}\leq C||g||_{\frac{q}{r}-1,q}$ when $g\in C_{0}^{\infty
}(0,\infty ),g\geq 0,$ or $g$ is the a.e. limit of a nondecreasing sequence
of such functions. Thus, a counterexample is obtained by choosing $g:=\chi
_{(n,n+1)}$ if $r>q$ and $g:=t^{\frac{1}{n}-\frac{1}{r}}\chi _{(0,1)}$ if $%
r<q$ and by letting $n$ tend to $\infty .$

(iv) The scaling used in (iii) now shows that if $W_{\{a,b\}}^{1,(q,p)}(\Bbb{%
R}_{*}^{N})$ $\hookrightarrow L^{r}(\Bbb{R}^{N};|x|^{c^{1}}dx),$ then $%
||u||_{c^{1},r}\leq C||\nabla u||_{b,p}$ for some constant $C>0.$ The proof
that $C$ does not exist is slightly different when $a\neq -N$ and when $%
a=-N. $

\textit{Case (iv-{\scriptsize 1}):} $a\neq -N.$

By Kelvin transform, we may assume $a<-N$ with no loss of generality. It
suffices to prove that, given $C>0,$ 
\begin{equation}
||f||_{c^{1}+N-1,r}\leq C||f^{\prime }||_{b+N-1,p},  \label{9}
\end{equation}
cannot hold for every $f\in W_{loc}^{1,p}(0,\infty )$ with $f\geq 0,f=0$ on
a neighborhood of $0$ and $f=M$ (constant) on a neighborhood of $\infty $
(if so, $u(x)=f(|x|)$ is in $W_{\{a,b\}}^{1,(q,p)}(\Bbb{R}_{*}^{N})$
irrespective of $b\in \Bbb{R}$ and of $p\geq 1,q>0$).

It is well-known that if $1\leq r<p$ and $C>0,$ the weighted Hardy
inequality $\left( \int_{0}^{\infty }t^{\frac{r(b-p+N)}{p}-1}\left(
\int_{0}^{t}g(\tau )d\tau \right) ^{r}dt\right) ^{\frac{1}{r}}\leq C\left(
\int_{0}^{\infty }t^{b+N-1}g(t)^{p}dt\right) ^{\frac{1}{p}}$ does not hold
for every measurable $g\geq 0$ on $(0,\infty ),$ because power weights never
satisfy the necessary compatibility condition when $r<p$ (%
\cite[Theorem 1, p. 47]{Ma85}). This is also true, but more delicate, when $%
0<r<1$ (\cite{Si91}, \cite{SiSt96}). Thus, if $0<r<p,$ there is a sequence $%
g_{n}\geq 0$ such that $\int_{0}^{\infty }t^{b+N-1}g_{n}(t)^{p}dt<\infty $
and that 
\begin{equation*}
\left( \int_{0}^{\infty }t^{\frac{r(b-p+N)}{p}-1}\left(
\int_{0}^{t}g_{n}(\tau )d\tau \right) ^{r}dt\right) ^{\frac{1}{r}}>n\left(
\int_{0}^{\infty }t^{b+N-1}g_{n}^{p}(t)dt\right) ^{\frac{1}{p}}.
\end{equation*}
If $b-p\geq -N,$ the left-hand side is even $\infty $ when $g_{n}\neq 0,$ so
it may be assumed that $b-p<-N$ whenever convenient (which happens to be the
case when $p=1$). The simple proof by Sinnamon and Stepanov (%
\cite[Theorem 2.4]{SiSt96} if $p>1,$ \cite[Theorem 3.3]{SiSt96} if $p=1$)
reveals at once that $g_{n}$ may be chosen in $L^{p}(0,\infty )$ and with
compact support. Then, $f_{n}(t):=\int_{0}^{t}g_{n}(\tau )d\tau \geq 0$
vanishes on a neighborhood of $0$ and is eventually constant. Since $\frac{%
r(b-p+N)}{p}-1=c^{1}+N-1,$ this provides a counterexample to (\ref{9}).

\textit{Case (iv-{\scriptsize 2}):} $a=-N.$

Then, $b-p\neq -N$ since $\frac{a+N}{q}\neq \frac{b-p+N}{p}.$ By the usual
Kelvin transform argument -which does not affect $a=-N$- we may assume $%
b-p<-N.$ It suffices to show that (\ref{9}) cannot hold for every $f\in
W_{loc}^{1,p}(0,\infty )$ with $f\geq 0,$ $f=0$ on a neighborhood of $0$ and 
$f(t)=Mt^{-\varepsilon }$ for some constants $M,\varepsilon >0$ and large $t$
(if so, $u(x)=f(|x|)$ is in $W_{\{-N,b\}}^{1,(q,p)}(\Bbb{R}_{*}^{N})$ since $%
b-p<-N$).

With $f_{n}$ and $g_{n}=f_{n}^{\prime }$ as in Case (iv-{\scriptsize 1})
above, set $h_{n}(t):=f_{n}(t)$ if $0<t<1$ and $h_{n}(t):=t^{-\varepsilon
_{n}}f_{n}(t)$ if $t\geq 1,$ where $\varepsilon _{n}>0$ will be chosen
shortly. Note that $h_{n}=0$ on a neighborhood of $0$ and $%
h_{n}(t)=M_{n}t^{-\varepsilon _{n}}$ for $t>0$ large enough since $%
f_{n}(t)=M_{n}$ is constant for large $t.$ Since $f_{n}$ provides a
counterexample to (\ref{9}) and $h_{n}=f_{n}$ on $(0,1),$ $h_{n}$ will also
be a counterexample if, when $n$ is fixed, $\varepsilon _{n}>0$ can be
chosen so that $\int_{1}^{\infty }t^{c^{1}+N-1}h_{n}(t)^{r}dt$ is
arbitrarily close to $\int_{1}^{\infty }t^{c^{1}+N-1}f_{n}(t)^{r}dt$ and $%
\int_{1}^{\infty }t^{b+N-1}|h_{n}^{\prime }(t)|^{p}dt$ is arbitrarily close
to $\int_{1}^{\infty }t^{b+N-1}|f_{n}^{\prime }(t)|^{p}dt.$

By the monotone convergence of $\int_{1}^{\infty }t^{c^{1}+N-1-r\varepsilon
}f_{n}(t)^{r}dt$ as $\varepsilon \searrow 0,$ the former property holds. For
the latter, it suffices to use (1) $\lim_{\varepsilon \rightarrow
0}\int_{1}^{\infty }t^{b+N-1-p\varepsilon }g_{n}(t)^{p}dt=\int_{1}^{\infty
}t^{b+N-1}g_{n}(t)^{p}dt,$ also proved by a monotone convergence argument,
and\linebreak (2) $\lim_{\varepsilon \rightarrow 0}\varepsilon
^{p}\int_{1}^{\infty }t^{-p\varepsilon +b-p+N-1}f_{n}(t)^{p}dt=0,$ which
follows from the boundedness of $f_{n}$ and from $b-p<-N.$

(v) The main difference with the proof of parts (iii) and (iv) is that the
scaling argument used there is inoperative because all the powers of $%
\lambda $ cancel out. Let $\eta $ denote the common value 
\begin{equation}
\eta :=\frac{a+N}{q}=\frac{b-p+N}{p}=\frac{c+N}{r}.  \label{10}
\end{equation}
If $||u||_{c,r}\leq C(||u||_{a,q}+||\nabla u||_{b,p})$ for every $u\in
W_{\{a,b\}}^{1,(q,p)}(\Bbb{R}_{*}^{N}),$ the choice $u(x):=f(|x|)$ with $%
f\in C_{0}^{\infty }(0,\infty )$ yields $||f||_{c+N-1,r}\leq C(||f^{\prime
}||_{b+N-1,p}+||f||_{a+N-1,q}).$ If now $g\in C_{0}^{\infty }(\Bbb{R}),$
then $f(t)=t^{-\eta }g(\ln t)$ with $\eta $ from (\ref{10}) is in $%
C_{0}^{\infty }(0,\infty ).$ By the change of variable $s:=\ln t,$ we obtain
the unweighted inequality $||g||_{r}\leq C(||g^{\prime
}||_{p}+||g||_{q}+||g||_{p})$ for every $g\in C_{0}^{\infty }(\Bbb{R}).$
With $g\neq 0$ chosen once and for all and $g(t)$ replaced by $g\left(
\lambda t\right) ,\lambda >0,$ it follows that $I_{1}\leq C(\lambda ^{\frac{1%
}{p^{\prime }}+\frac{1}{r}}I_{2}++\lambda ^{\frac{1}{r}-\frac{1}{q}%
}I_{3}+\lambda ^{\frac{1}{r}-\frac{1}{p}}I_{4})$ with $I_{1},...,I_{4}>0$
independent of $\lambda .$ Since $r<\min \{p,q\},$ the right-hand side tends
to $0$ with $\lambda ,$ which is absurd.

(vi) Argue as in (v) above, just noticing that now $\eta =0$ in (\ref{10}),
which produces the simpler $||g||_{r}\leq C(||g^{\prime }||_{p}+||g||_{q})$
when $g\in C_{0}^{\infty }(\Bbb{R}).$ Then, $I_{1}\leq C(\lambda ^{\frac{1}{
p^{\prime }}+\frac{1}{r}}I_{2}+\lambda ^{\frac{1}{r}-\frac{1}{q}}I_{3})$ for 
$\lambda >0$ by rescaling, which is absurd if $r<q.$
\end{proof}

As a corollary, we obtain that the embedding is often characterized by a
multiplicative rather than additive norm inequality (see also Section \ref
{CKNinequalities}).

\begin{corollary}
\label{cor2}Let $a,b,c\in \Bbb{R}$ and $1\leq p<\infty ,0<q,r<\infty $ be
such that $\frac{a+N}{q}\neq \frac{b-p+N}{p}.$ Then, $W_{\{a,b\}}^{1,(q,p)}(%
\Bbb{R}_{*}^{N})$ is continuously embedded into $L^{r}(\Bbb{R}%
^{N};|x|^{c}dx) $ if and only if $c$ is in the closed interval with
endpoints $c^{0}$ and $c^{1}$ and there is $C>0$ such that 
\begin{equation}
||u||_{c,r}\leq C||\nabla u||_{b,p}^{\theta _{c}}||u||_{a,q}^{1-\theta
_{c}},\qquad \forall u\in W_{\{a,b\}}^{1,(q,p)}(\Bbb{R}_{*}^{N}),  \label{11}
\end{equation}
where $\theta _{c}$ is given by (\ref{2}). The same property is true upon
replacing $W_{\{a,b\}}^{1,(q,p)}(\Bbb{R}_{*}^{N})$ by $\widetilde{W}%
_{\{a,b\}}^{1,(q,p)}$and (\ref{11}) by 
\begin{equation}
||u||_{c,r}\leq C||\partial _{\rho }u||_{b,p}^{\theta
_{c}}||u||_{a,q}^{1-\theta _{c}},\qquad \forall u\in \widetilde{W}%
_{\{a,b\}}^{1,(q,p)}.  \label{12}
\end{equation}
\end{corollary}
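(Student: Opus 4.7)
The plan is to derive the two directions separately, with the content sitting almost entirely in the ``only if'' direction; the ``if'' direction is an immediate consequence of the weighted AM-GM inequality.

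\textbf{Step 1 (``if'' direction).} Assume $c$ lies in the closed interval $[c^0, c^1]$ (or $[c^1, c^0]$) and that (\ref{11}) holds. The elementary inequality $s^{\theta_c} t^{1-\theta_c} \leq s + t$ for $s,t \geq 0$ yields $||u||_{c,r} \leq C(||\nabla u||_{b,p} + ||u||_{a,q})$, which is exactly the continuity of the embedding $W_{\{a,b\}}^{1,(q,p)}(\mathbb{R}_*^N) \hookrightarrow L^r(\mathbb{R}^N; |x|^c\,dx)$ with respect to the norm (\ref{0}). The same argument handles the $\widetilde{W}$-version using $\partial_\rho$ in place of $\nabla$.

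\textbf{Step 2 (``only if'' direction, set-up).} Assume the embedding holds. Theorem \ref{th1}(i) forces $c$ to belong to the closed interval with endpoints $c^0$ and $c^1$, and we have the additive bound $||u||_{c,r} \leq C(||u||_{a,q} + ||\nabla u||_{b,p})$. For $\lambda > 0$ and $u \in W_{\{a,b\}}^{1,(q,p)}(\mathbb{R}_*^N)$, set $u_\lambda(x) := u(\lambda x)$; by a change of variable one computes
\begin{equation*}
||u_\lambda||_{c,r} = \lambda^{-\frac{c+N}{r}}||u||_{c,r}, \quad ||u_\lambda||_{a,q} = \lambda^{-\frac{a+N}{q}}||u||_{a,q}, \quad ||\nabla u_\lambda||_{b,p} = \lambda^{-\frac{b-p+N}{p}}||\nabla u||_{b,p}.
\end{equation*}
Since $u_\lambda \in W_{\{a,b\}}^{1,(q,p)}(\mathbb{R}_*^N)$ with the same embedding constant $C$, applying the additive inequality to $u_\lambda$ and multiplying through by $\lambda^{(c+N)/r}$ gives, in view of (\ref{4}),
\begin{equation*}
||u||_{c,r} \leq C\bigl(\lambda^{\theta_c \delta}\,||u||_{a,q} + \lambda^{-(1-\theta_c)\delta}\,||\nabla u||_{b,p}\bigr), \qquad \delta := \tfrac{b-p+N}{p} - \tfrac{a+N}{q} \neq 0.
\end{equation*}

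\textbf{Step 3 (optimization over $\lambda$).} If $\theta_c \in (0,1)$, choose $\lambda > 0$ so that $\lambda^\delta = ||\nabla u||_{b,p}/||u||_{a,q}$ (the cases where one of these norms vanishes are handled by taking $\lambda \to 0$ or $\lambda \to \infty$ with the obvious convention $0\cdot\infty = 0$, or by a density/approximation remark). Substitution yields precisely (\ref{11}) with a new constant depending on $\theta_c$ and $C$. If $\theta_c = 0$ so that $c = c^0$, letting $\lambda \to 0^+$ or $\lambda \to +\infty$ (according to the sign of $\delta$) kills the gradient term and gives $||u||_{c,r} \leq C||u||_{a,q}$; by Theorem \ref{th1}(iii) this is only possible when $r = q$ and $c^0 = a$, in which case (\ref{11}) is trivially $||u||_{a,q} \leq C ||u||_{a,q}$. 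The case $\theta_c = 1$ is symmetric, reducing to an inequality $||u||_{c^1,r} \leq C||\nabla u||_{b,p}$, which is (\ref{11}) in that case. The entire argument transfers verbatim to $\widetilde{W}_{\{a,b\}}^{1,(q,p)}$ since $\partial_\rho u_\lambda(x) = \lambda(\partial_\rho u)(\lambda x)$ scales exactly like $\nabla u_\lambda$.

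\textbf{Main obstacle.} The only delicate point is making sure the optimization is legitimate at the endpoints $\theta_c \in \{0,1\}$ and when one of $||u||_{a,q}, ||\nabla u||_{b,p}$ vanishes; the former is resolved by invoking the appropriate clauses of Theorem \ref{th1} (which force the degenerate case to be trivial), and the latter by a limiting argument in $\lambda$ (noting that if, say, $||\nabla u||_{b,p} = 0$ and $\theta_c \in (0,1)$, then sending $\lambda$ to the right endpoint of $(0,\infty)$ forces $||u||_{c,r} = 0$, consistent with the right-hand side of (\ref{11}) being interpreted as $0$).
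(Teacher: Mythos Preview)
Your proof is correct and follows essentially the same approach as the paper: arithmetic-geometric for sufficiency, then the scaling $u\mapsto u(\lambda\cdot)$ combined with optimization over $\lambda$ for necessity, with endpoint cases $\theta_c\in\{0,1\}$ handled by sending $\lambda$ to $0$ or $\infty$. One minor remark: in your $\theta_c=0$ case, the limiting argument already yields $||u||_{c^0,r}\leq C||u||_{a,q}$, which \emph{is} (\ref{11}) with $\theta_c=0$; the subsequent appeal to Theorem~\ref{th1}(iii) to force $r=q$ is true but unnecessary for the corollary as stated.
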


\begin{proof}
The sufficiency follows from the arithmetic-geometric inequality. We prove
the necessity for $\widetilde{W}_{\{a,b\}}^{1,(q,p)}.$ Similar arguments
work in the case of $W_{\{a,b\}}^{1,(q,p)}(\Bbb{R}_{*}^{N}).$

Suppose then that $\widetilde{W}_{\{a,b\}}^{1,(q,p)}\hookrightarrow L^{r}(%
\Bbb{R}^{N};|x|^{c}dx).$ By part (i) of Theorem \ref{th1}, $c$ is in the
closed interval with (distinct) endpoints $c^{0}$ and $c^{1}.$ Furthermore, $%
||u||_{c,r}\leq C(||u||_{a,q}+||\partial _{\rho }u||_{b,p})$ for every $u\in 
\widetilde{W}_{\{a,b\}}^{1,(q,p)}.$ In this inequality, replace $u(x)$ by $%
u(\lambda x)$ with $\lambda >0$ to get 
\begin{multline}
||u||_{c,r}\leq C\lambda ^{\frac{c+N}{r}-\frac{a+N}{q}}||u||_{a,q}+C\lambda
^{\frac{c+N}{r}-\frac{b-p+N}{p}}||\partial _{\rho }u||_{b,p}=  \label{13} \\
C\lambda ^{\theta _{c}\frac{c^{1}-c^{0}}{r}}||u||_{a,q}+C\lambda ^{(1-\theta
_{c})\frac{c^{0}-c^{1}}{r}}||\partial _{\rho }u||_{b,p}.
\end{multline}

If $c=c^{0}$ ($c=c^{1}$), then $\theta _{c}=0$ ($\theta _{c}=1$), so that $%
||u||_{c,r}\leq C||u||_{a,q}$ ($||u||_{c,r}\leq C||\partial _{\rho
}u||_{b,p} $), i.e., (\ref{12}) holds, by letting $\lambda $ tend to $0$ or
to $\infty . $ Otherwise, (\ref{12}) follows by minimizing the right-hand
side of (\ref{13}) for $\lambda >0.$ This changes $C,$ which however remains
independent of $u$ even though the minimizer is of course $u$-dependent. (If 
$\theta _{c}\neq 0,$ (\ref{13}) shows that $u=0$ if $\partial _{\rho }u=0,$
so that it is not restrictive to assume $||u||_{a,q}>0$ and $||\partial
_{\rho }u||_{b,p}>0$ in the minimization step.)
\end{proof}

The next theorem gives a different necessary condition for the continuity of
the embedding $W_{\{a,b\}}^{1,(q,p)}(\Bbb{R}_{*}^{N})\hookrightarrow L^{r}(%
\Bbb{R}^{N};|x|^{c}dx).$

\begin{theorem}
\label{th3}Let $a,b,c\in \Bbb{R}$ and $1\leq p<\infty ,0<q,r<\infty $ be
given. \newline
(i) If $\frac{a+N}{q}\neq \frac{b-p+N}{p}$ and $W_{\{a,b\}}^{1,(q,p)}(\Bbb{R}%
_{*}^{N})\hookrightarrow L^{r}(\Bbb{R}^{N};|x|^{c}dx),$ then $\theta _{c}\in
[0,1]$ and 
\begin{equation}
\theta _{c}\left( \frac{1}{p}-\frac{1}{N}-\frac{1}{q}\right) \leq \frac{1}{r}%
-\frac{1}{q}.  \label{14}
\end{equation}
In particular, $r\leq \max \{p^{*},q\}.$\newline
(ii) If $\frac{a+N}{q}=\frac{b-p+N}{p}$ and $c=c^{0}$ ($=c^{1}$) and if $%
W_{\{a,b\}}^{1,(q,p)}(\Bbb{R}_{*}^{N})\hookrightarrow L^{r}(\Bbb{R}%
^{N};|x|^{c}dx),$ then $r\leq \max \{p^{*},q\}.$
\end{theorem}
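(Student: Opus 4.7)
The plan is to exploit the homogeneity structure by translating a fixed compactly supported test function far from the origin, which on its (small) support renders all power weights essentially constant and reduces the weighted embedding to a scaling constraint in the translation parameter. This is the remark from \cite{CaKoNi84} alluded to in the introduction.

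\textit{Part (i).} First, Theorem \ref{th1}(i) forces $c$ to lie in the closed interval with endpoints $c^{0}, c^{1}$, so $\theta_{c}\in[0,1]$ is well defined. Corollary \ref{cor2} upgrades the assumed continuous embedding to the multiplicative inequality
\begin{equation*}
\|u\|_{c,r}\leq C\|\nabla u\|_{b,p}^{\theta_{c}}\|u\|_{a,q}^{1-\theta_{c}},\qquad u\in W_{\{a,b\}}^{1,(q,p)}(\mathbb{R}_{*}^{N}).
\end{equation*}
Fix a nonzero $\varphi\in C_{0}^{\infty}(\mathbb{R}^{N})$ with $\operatorname{supp}\varphi\subset B(0,1/2)$, and for $|x_{0}|\geq 1$ set $v_{x_{0}}(x):=\varphi(x-x_{0})$. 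On $\operatorname{supp} v_{x_{0}}\subset B(x_{0},1/2)\subset\mathbb{R}_{*}^{N}$ one has $|x|/|x_{0}|\in[1/2,3/2]$, so for any real $s$, $|x|^{s}$ is comparable to $|x_{0}|^{s}$. Consequently
\begin{equation*}
\|v_{x_{0}}\|_{c,r}\asymp|x_{0}|^{c/r}\|\varphi\|_{r},\quad \|v_{x_{0}}\|_{a,q}\asymp|x_{0}|^{a/q}\|\varphi\|_{q},\quad \|\nabla v_{x_{0}}\|_{b,p}\asymp|x_{0}|^{b/p}\|\nabla\varphi\|_{p},
\end{equation*}
with implied constants independent of $x_{0}$. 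Substituting into the multiplicative inequality and dividing by the fixed norms of $\varphi$ yields $|x_{0}|^{c/r}\leq C'|x_{0}|^{\theta_{c}b/p+(1-\theta_{c})a/q}$ for all $|x_{0}|\geq 1$. Letting $|x_{0}|\to\infty$ forces
\begin{equation*}
\tfrac{c}{r}\leq \theta_{c}\tfrac{b}{p}+(1-\theta_{c})\tfrac{a}{q}.
\end{equation*}
Combining with \eqref{4} (which gives an equality relating $(c+N)/r$ to the corresponding combinations of $(b-p+N)/p$ and $(a+N)/q$) and simplifying, the difference between the two sides is exactly $N[\theta_{c}(\tfrac{1}{p}-\tfrac{1}{N}-\tfrac{1}{q})+\tfrac{1}{q}-\tfrac{1}{r}]$, and setting this $\leq 0$ reproduces \eqref{14}.

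To get $r\leq\max\{p^{*},q\}$, I split on the sign of $\tfrac{1}{p^{*}}-\tfrac{1}{q}=\tfrac{1}{p}-\tfrac{1}{N}-\tfrac{1}{q}$ (with the convention $1/p^{*}=0$ when $p\geq N$, in which case $\max\{p^{*},q\}=\infty$ and nothing is to prove). If $\tfrac{1}{p^{*}}-\tfrac{1}{q}\leq 0$, then $\theta_{c}(\tfrac{1}{p^{*}}-\tfrac{1}{q})\geq \tfrac{1}{p^{*}}-\tfrac{1}{q}$, so \eqref{14} gives $\tfrac{1}{r}\geq\tfrac{1}{p^{*}}$, i.e. $r\leq p^{*}=\max\{p^{*},q\}$. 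If instead $\tfrac{1}{p^{*}}-\tfrac{1}{q}>0$, then $\theta_{c}(\tfrac{1}{p^{*}}-\tfrac{1}{q})\geq 0$, so \eqref{14} gives $\tfrac{1}{r}\geq\tfrac{1}{q}$, i.e. $r\leq q=\max\{p^{*},q\}$.

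\textit{Part (ii).} Here $c^{0}=c^{1}$ so $\theta_{c}$ is undefined, but the continuous embedding still yields $\|u\|_{c,r}\leq C(\|u\|_{a,q}+\|\nabla u\|_{b,p})$. Applying this to the same translates $v_{x_{0}}$ and using the common value $\eta=\tfrac{a+N}{q}=\tfrac{b-p+N}{p}=\tfrac{c+N}{r}$ to rewrite $\tfrac{a}{q}=\eta-\tfrac{N}{q}$, $\tfrac{b}{p}=\eta+1-\tfrac{N}{p}$, $\tfrac{c}{r}=\eta-\tfrac{N}{r}$, the factor $|x_{0}|^{\eta}$ cancels after multiplication by $|x_{0}|^{N/r}$, leaving
\begin{equation*}
\|\varphi\|_{r}\leq C\bigl(|x_{0}|^{N(1/r-1/q)}\|\varphi\|_{q}+|x_{0}|^{N(1/r-1/p^{*})}\|\nabla\varphi\|_{p}\bigr).
\end{equation*}
(The second exponent equals $\tfrac{N}{r}+1-\tfrac{N}{p}$, which for $p<N$ equals $N(\tfrac{1}{r}-\tfrac{1}{p^{*}})$; for $p\geq N$ the claim $r\leq\max\{p^{*},q\}=\infty$ is vacuous.) If $r>\max\{p^{*},q\}$ then both exponents of $|x_{0}|$ on the right are strictly negative, so the right side tends to $0$ as $|x_{0}|\to\infty$ while the left side is a fixed positive constant, a contradiction.

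There is no serious obstacle: the only care needed is in justifying the comparability $|x|^{s}\asymp|x_{0}|^{s}$ on $\operatorname{supp} v_{x_{0}}$ uniformly in $x_{0}$, and in the bookkeeping that identifies the $|x_{0}|$-exponent in (i) with $N$ times the left-minus-right of \eqref{14}. The deduction of the consequence $r\leq\max\{p^{*},q\}$ in (i) is the only place where a genuine case split on the sign of $\tfrac{1}{p^{*}}-\tfrac{1}{q}$ is required.
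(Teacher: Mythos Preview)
Your proof is correct and follows essentially the same approach as the paper: both use Theorem~\ref{th1}(i) and Corollary~\ref{cor2} to obtain the multiplicative inequality, then test it (or, in part~(ii), the additive embedding inequality) on translates $\varphi(\cdot-x_{0})$ with $|x_{0}|\to\infty$, exploiting that the power weights are essentially constant on the compact support to extract the exponent comparison that yields~\eqref{14}. The only cosmetic differences are that the paper deduces $r\leq\max\{p^{*},q\}$ by noting~\eqref{14} would fail at both $\theta_{c}=0$ and $\theta_{c}=1$ if $r>\max\{p^{*},q\}$, whereas you split on the sign of $\tfrac{1}{p^{*}}-\tfrac{1}{q}$; and in~(ii) the paper compares growth rates of the two sides rather than sending both right-hand terms to zero---but these are equivalent reorganizations of the same argument.
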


\begin{proof}
(i) Part (i) of Theorem \ref{th1} shows that $\theta _{c}\in [0,1].$ The
next argument is taken from \cite{CaKoNi84}, with a minor adjustment to fit
the setting of this paper. Let $\varphi \in C_{0}^{\infty }(\Bbb{R}
^{N}),\varphi \neq 0,$ be chosen once and for all. If $x_{0}\in \Bbb{R}^{N}$
and $R:=|x_{0}|$ is large enough, then $\varphi (\cdot +x_{0})\in
C_{0}^{\infty }(\Bbb{R}_{*}^{N})\subset W_{\{a,b\}}^{1,(q,p)}(\Bbb{R}%
_{*}^{N})$ irrespective of $a,b,p$ and $q.$ By using (\ref{11}) with $%
u=\varphi (\cdot +x_{0})$ and by letting $R\rightarrow \infty ,$ we get
(because $\limfunc{Supp}\varphi $ is compact) $R^{\frac{c}{r}}||\varphi
||_{r}\leq CR^{\frac{b\theta _{c}}{p}+\frac{a(1-\theta _{c})}{q}}||\nabla
\varphi ||_{p}^{\theta _{c}}||\varphi ||_{q}^{1-\theta _{c}}$ for large $R$
after changing $C,$ whence $\frac{c}{r}\leq \frac{b\theta _{c}}{p}+\frac{%
a(1-\theta _{c})}{q}.$ Then, (\ref{14}) follows by adding $\frac{N}{r}$ and
using (\ref{4}).

If $p<N$ and $r>\max \{p^{*},q\},$ then (\ref{14}) cannot hold since it
fails when $\theta _{c}=0$ and when $\theta _{c}=1.$ Thus, $r\leq \max
\{p^{*},q\}$ is necessary.

(ii) Use the same method as in (i), but with the additive inequality $%
||\varphi ||_{c^{0},r}\leq C(||\varphi ||_{a,q}+||\nabla \varphi ||_{b,p}).$
This yields $R^{\frac{c^{0}}{r}}||\varphi ||_{r}\leq C(R^{\frac{a}{q}
}||\varphi ||_{q}+R^{\frac{b}{p}}||\nabla \varphi ||_{p})$ for large $R>0.$
By (\ref{1}), $\frac{c^{0}}{r}=\frac{a}{q}+\frac{N}{q}-\frac{N}{r}$ and
(since $c^{0}=c^{1}$) $\frac{b}{p}=\frac{a}{q}+\frac{N}{q}+1-\frac{N}{p},$
whence $R^{\frac{N}{q}-\frac{N}{r}}||\varphi ||_{r}\leq C(||\varphi
||_{q}+R^{\frac{N}{q}+1-\frac{N}{p}}||\nabla \varphi ||_{p}).$ If $r>q,$
this implies $\frac{N}{q}-\frac{N}{r}\leq \frac{N}{q}+1-\frac{N}{p},$ i.e., $%
r\leq p^{*},$ so that $r\leq \max \{p^{*},q\}$ in all cases.
\end{proof}

The above proof may give the wrong impression that (\ref{14}) arises only as
a result of integrability at infinity. That this is not the case can be seen
by noticing that the choice $\varphi (x|x|^{-2}+x_{0})$ instead of $\varphi
(x+x_{0})$ also yields (\ref{14}), while the support of $\varphi
(x|x|^{-2}+x_{0})$ shrinks towards $0$ as $|x_{0}|\rightarrow \infty .$

The verification that Theorem \ref{th1} and Theorem \ref{th3} together imply
that the hypotheses made in Theorem \ref{th0} are necessary is routine and
left to the reader.

\section{The spaces $\widetilde{W}_{loc}^{1,p}$ and related concepts\label%
{spaces}}

In this section, we develop the background material needed for the proofs of
the main results of the next two sections. Let $\omega _{N}$ denote the
volume of the unit ball of $\Bbb{R}^{N}.$ If $u\in L_{loc}^{p}(\Bbb{R}%
_{*}^{N})$ with $p\geq 1,$ define the spherical mean of $u$ 
\begin{equation}
f_{u}(t):=(N\omega _{N})^{-1}\int_{\Bbb{S}^{N-1}}u(t\sigma )d\sigma .
\label{15}
\end{equation}
By Fubini's theorem in spherical coordinates, $f_{u}(t)$ is defined for a.e. 
$t>0$ and $f_{u}\in L_{loc}^{p}(0,\infty ).$ If $u\in \widetilde{W}%
_{loc}^{1,p},$ where 
\begin{equation*}
\widetilde{W}_{loc}^{1,p}:=\{u\in L_{loc}^{p}(\Bbb{R}_{*}^{N}):\partial
_{\rho }u\in L_{loc}^{p}(\Bbb{R}_{*}^{N})\}
\end{equation*}
and $\partial _{\rho }u:=\nabla u\cdot \frac{x}{|x|},$ more is true:

\begin{lemma}
\smallskip \label{lm4}If $1\leq p<\infty $ and $u\in \widetilde{W}%
_{loc}^{1,p},$ then $f_{u}\in W_{loc}^{1,p}(0,\infty ).$ Furthermore, 
\begin{equation}
f_{u}^{\prime }(t)=(N\omega _{N})^{-1}\int_{\Bbb{S}^{N-1}}\partial _{\rho
}u(t\sigma )d\sigma .  \label{16}
\end{equation}
Conversely, if $f\in W_{loc}^{1,p}(0,\infty )$ and $u(x):=f(|x|),$ then $%
u\in \widetilde{W}_{loc}^{1,p}$ and $f_{u}=f,\partial _{\rho }u(x)=f^{\prime
}(|x|).$
\end{lemma}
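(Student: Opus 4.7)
The plan is to prove the forward direction by a smooth-approximation argument on compact annuli, and to handle the converse by a direct distributional computation in spherical coordinates.

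For the forward direction, fix any $[a,b]\subset(0,\infty)$ and mollify $u$ on a slightly larger annulus strictly contained in $\Bbb{R}_{*}^{N}$ to obtain $u_{\epsilon}\in C^{\infty}$ with $u_{\epsilon}\to u$ and $\partial_{\rho}u_{\epsilon}\to\partial_{\rho}u$ in $L^{p}$ on $\{a<|x|<b\}$; this is legitimate because $x/|x|$ is smooth on $\Bbb{R}_{*}^{N}$, so $\partial_{\rho}$ commutes with convolution on compact subsets away from the origin. For smooth $u_{\epsilon}$, classical differentiation under the integral gives \eqref{16} with $u_{\epsilon}$ in place of $u$. To pass to the limit, I would apply Jensen's inequality (using that $d\sigma/(N\omega_{N})$ is a probability measure on $\Bbb{S}^{N-1}$) to $|f_{u_{\epsilon}}(t)-f_{u}(t)|^{p}$ and to the analogous difference of the right-hand sides of \eqref{16}, multiply by $t^{N-1}$, integrate on $[a,b]$, and recognize the results via Fubini in spherical coordinates as $\int_{a<|x|<b}|u_{\epsilon}-u|^{p}\,dx$ and $\int_{a<|x|<b}|\partial_{\rho}u_{\epsilon}-\partial_{\rho}u|^{p}\,dx$, both of which tend to $0$. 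Since $t^{N-1}$ is bounded below on $[a,b]$, one obtains $L^{p}(a,b)$-convergence of $f_{u_{\epsilon}}$ and of its derivative to $f_{u}$ and to $g(t):=(N\omega_{N})^{-1}\int_{\Bbb{S}^{N-1}}\partial_{\rho}u(t\sigma)\,d\sigma$ respectively, so $f_{u}\in W^{1,p}(a,b)$ with $f_{u}'=g$. Arbitrariness of $[a,b]$ then yields $f_{u}\in W_{loc}^{1,p}(0,\infty)$ and \eqref{16}.

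For the converse direction, with $f\in W_{loc}^{1,p}(0,\infty)$ and $u(x):=f(|x|)$, the identity $f_{u}=f$ is immediate since $u(t\sigma)=f(t)$ is independent of $\sigma$. To identify $\partial_{\rho}u$ with $f'(|x|)$ distributionally on $\Bbb{R}_{*}^{N}$, I would test against $\varphi\in C_{0}^{\infty}(\Bbb{R}_{*}^{N})$: by definition, $\langle\partial_{\rho}u,\varphi\rangle=-\int u\,\nabla\cdot((x/|x|)\varphi)\,dx$, and since $\nabla\cdot(x/|x|)=(N-1)/|x|$ this expands, after switching to spherical coordinates, to
\[
-\int_{\Bbb{S}^{N-1}}\int_{0}^{\infty}f(t)\,\frac{d}{dt}\bigl(t^{N-1}\varphi(t\sigma)\bigr)\,dt\,d\sigma.
\]
A one-dimensional integration by parts on the inner integral, legitimate because $f\in W_{loc}^{1,1}(0,\infty)$ and $t\mapsto\varphi(t\sigma)$ has compact support in $(0,\infty)$ for each $\sigma$, produces $\int f'(|x|)\varphi(x)\,dx$. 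Local $L^{p}$-integrability of $f'(|x|)$ on $\Bbb{R}_{*}^{N}$ follows from Fubini together with $f'\in L_{loc}^{p}(0,\infty)$.

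There is no real obstacle of substance here; the work is organizational. The only point requiring care is ensuring that the mollification is carried out on an annulus strictly interior to the one of interest so that all convergences hold on the full annulus $\{a<|x|<b\}$, but this is a routine matter once one notices that $\partial_{\rho}$ has smooth coefficients away from the origin.
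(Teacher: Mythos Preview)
Your argument is correct in substance, but both directions are handled quite differently from the paper. For the forward implication the paper bypasses approximation entirely: testing $f_u'$ against $\varphi\in C_0^\infty(0,\infty)$ and setting $\psi(x)=\varphi(|x|)$, one obtains $\langle f_u',\varphi\rangle = -(N\omega_N)^{-1}\langle u,|x|^{1-N}\partial_\rho\psi\rangle = (N\omega_N)^{-1}\langle |x|^{1-N}\partial_\rho u,\psi\rangle$ via the identity $\nabla\cdot(|x|^{-N}x)=0$, so that $f_u'=f_{\partial_\rho u}$ in one stroke. Your mollification route reaches the same conclusion but rests on a claim you phrase too loosely: $\partial_\rho$ does \emph{not} literally commute with Euclidean convolution, since its coefficients $x_i/|x|$ are non-constant; and because in $\widetilde{W}_{loc}^{1,p}$ the individual $\partial_i u$ need not be functions, the convergence $\partial_\rho(u\ast\rho_\epsilon)\to\partial_\rho u$ in $L^p_{loc}$ genuinely requires the Friedrichs commutator lemma (the commutator $\partial_\rho(u\ast\rho_\epsilon)-(\partial_\rho u)\ast\rho_\epsilon$ tends to $0$ in $L^p_{loc}$ for $u\in L^p_{loc}$ when the coefficients are $C^1$). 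For the converse the roles are reversed: the paper simply cites a chain-rule theorem of Marcus--Mizel for $\nabla(f\circ|\cdot|)$, whereas your direct distributional computation in spherical coordinates is self-contained and arguably cleaner. Net: the paper's forward argument is shorter and sidesteps the commutator subtlety; your converse is more transparent than a citation.
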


\begin{proof}
Let $u\in \widetilde{W}_{loc}^{1,p}.$ If $\varphi \in C_{0}^{\infty
}(0,\infty ),$ set $\psi (x):=\varphi (|x|),$ so that $\psi \in
C_{0}^{\infty }(\Bbb{R}_{*}^{N})$ and $\partial _{\rho }\psi (x)=\varphi
^{\prime }(|x|).$ It follows that $\langle f_{u}^{\prime },\varphi \rangle
=-(N\omega _{N})^{-1}\left\langle u,|x|^{1-N}\partial _{\rho }\psi
\right\rangle =(N\omega _{N})^{-1}\left\langle |x|^{1-N}\partial _{\rho
}u,\psi \right\rangle $ (use $\nabla \cdot \left( |x|^{-N}x\right) =0$).
Since $\partial _{\rho }u\in L_{loc}^{p}(\Bbb{R}_{*}^{N}),$ this shows that $%
\langle f_{u}^{\prime },\varphi \rangle =\langle f_{\partial _{\rho
}u},\varphi \rangle ,$ that is, $f_{u}^{\prime }=f_{\partial _{\rho }u}\in
L_{loc}^{p}(0,\infty ).$ Thus, $f_{u}\in W_{loc}^{1,p}(0,\infty )$ and (\ref
{16}) holds.

Conversely, suppose that $f\in W_{loc}^{1,p}(0,\infty )$ and set $%
u(x):=f(|x|).$ Then, $u\in L_{loc}^{p}(\Bbb{R}_{*}^{N})$ (it is continuous)
and, by \cite[Theorem 4.3]{MaMi72}, $\nabla u(x)=f^{\prime }(|x|)\frac{x}{|x|%
}$ because $f$ is locally absolutely continuous. Thus, $u\in W_{loc}^{1,p}(%
\Bbb{R}_{*}^{N})\subset \widetilde{W}_{loc}^{1,p}$ and $f^{\prime
}(|x|)=\nabla u(x)\cdot \frac{x}{|x|}$ $=\partial _{\rho }u(x).$ That $%
f_{u}=f$ is obvious.
\end{proof}

If $\Omega $ is an open subset of $\Bbb{R}^{N}$ and $u\in W^{1,1}(\Omega ),$
it is well-known that $|u|\in W^{1,1}(\Omega )$ with $\nabla |u|=(\limfunc{
sgn}u)\nabla u$ (see for instance \cite[p. 48]{Zi89} or \cite[Theorem 2.2]
{MaMi72} for more general statements), where $\limfunc{sgn}u$ is defined to
be $0$ at points where $u=0.$ This is proved by showing that if $u\in
L^{1}(\Omega )$ and $\partial _{i}u\in L^{1}(\Omega )$ for some index $1\leq
i\leq N,$ then $\partial _{i}|u|\in L^{1}(\Omega )$ and $\partial _{i}|u|=(%
\limfunc{sgn}u)\partial _{i}u,$ because the assumptions suffice to ensure
the local absolute continuity of $u$ on almost every line segment in $\Omega 
$ parallel to the $x_{i}$-axis. Since a radial derivative is just a
directional derivative after passing to spherical coordinates, the same
arguments show that if $u\in \widetilde{W}_{loc}^{1,1},$ then $|u|\in 
\widetilde{W}_{loc}^{1,1}$ and $\partial _{\rho }|u|=(\limfunc{sgn}
u)\partial _{\rho }u.$ (That the derivative of $u(\cdot ,\sigma )$ is $%
\partial _{\rho }u(\cdot ,\sigma )$ can be justified by a variant of the
proof of Lemma \ref{lm4}.)

Another well-known result, usually proved by localization and mollification,
is that if $u\in W^{1,p}(\Omega )$ and $u\geq 0,$ then $u^{p}\in
W^{1,1}(\Omega )$ and $\partial _{i}(u^{p})=pu^{p-1}\partial _{i}u.$ Not
surprisingly, the proof actually requires only $u$ and $\partial _{i}u$ to
be in $L^{p}(\Omega ),$ so that completely similar arguments show that if $%
u\in \widetilde{W}_{loc}^{1,p}$ and $u\geq 0,$ then $u^{p}\in \widetilde{W}
_{loc}^{1,1}$ and $\partial _{\rho }u^{p}=pu^{p-1}\partial _{\rho }u.$ By
combining the above, we find:

\begin{lemma}
\label{lm5}If $1\leq p<\infty $ and $u\in \widetilde{W}_{loc}^{1,p},$ then $%
f_{u}\in W_{loc}^{1,p}(0,\infty )$ and $f_{u}^{\prime }$ is given by (\ref
{16}). Furthermore, $|u|^{p}\in \widetilde{W}_{loc}^{1,1}$and $\partial
_{\rho }(|u|^{p})=p|u|^{p-1}(\limfunc{sgn}u)\partial _{\rho }u,$ where $%
\limfunc{sgn}u:=0$ on $u^{-1}(0).$
\end{lemma}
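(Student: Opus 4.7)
The proof will essentially be a matter of assembling the three ingredients laid out in the discussion immediately preceding the lemma, so my plan is to verify that each ingredient indeed transfers cleanly from the classical Sobolev setting to the radial-derivative setting of $\widetilde{W}_{loc}^{1,p}$, and then combine them.

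First, the statement about $f_u$ is nothing more than Lemma \ref{lm4}, so I would dispose of that in a single sentence. The substance of the proof is the $|u|^p$ assertion. My plan is to split it into two steps. Step 1: show that $|u|\in \widetilde{W}_{loc}^{1,p}$ with $\partial_\rho|u|=(\operatorname{sgn}u)\,\partial_\rho u$. Step 2: apply the power rule to the nonnegative function $|u|$ to obtain $|u|^p\in\widetilde{W}_{loc}^{1,1}$ with $\partial_\rho(|u|^p)=p|u|^{p-1}\partial_\rho|u|$, then substitute the formula from Step 1.

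The justification of Step 1 follows the classical pattern. After passing to spherical coordinates $x=t\sigma$ with $(t,\sigma)\in(0,\infty)\times\mathbb{S}^{N-1}$, the hypothesis $u,\partial_\rho u\in L^p_{loc}(\mathbb{R}^N_*)$ implies, by Fubini, that for a.e.\ $\sigma\in\mathbb{S}^{N-1}$ the one-variable function $t\mapsto u(t\sigma)$ lies in $W^{1,p}_{loc}(0,\infty)$, with derivative equal to $\partial_\rho u(t\sigma)$ (as in the proof of Lemma \ref{lm4}, via the identity $\nabla\cdot(|x|^{-N}x)=0$ to move the derivative under the integral). The classical fact that $|\varphi|\in W^{1,p}_{loc}(0,\infty)$ with $|\varphi|'=(\operatorname{sgn}\varphi)\varphi'$ for scalar $\varphi\in W^{1,p}_{loc}(0,\infty)$ then applies fiber-by-fiber, and testing against $\psi(x)=\varphi(|x|)$ for $\varphi\in C_0^\infty(0,\infty)$ as in Lemma \ref{lm4} gives the global identity $\partial_\rho|u|=(\operatorname{sgn}u)\partial_\rho u$ in the distributional sense on $\mathbb{R}^N_*$. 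Since $|(\operatorname{sgn}u)\partial_\rho u|\le|\partial_\rho u|\in L^p_{loc}$, this places $|u|$ in $\widetilde{W}_{loc}^{1,p}$.

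Step 2 proceeds by the same localization/mollification argument used in the standard proof that $v\in W^{1,p}(\Omega)$, $v\ge0$, implies $v^p\in W^{1,1}(\Omega)$ with $\partial_i(v^p)=pv^{p-1}\partial_i v$; as the text notes, that argument uses only the $L^p$-integrability of $v$ and a single directional derivative, and so adapts verbatim to $\widetilde{W}_{loc}^{1,p}$ after reducing to the one-dimensional situation via spherical coordinates (the only subtlety is truncating away from the origin, which is harmless since everything is local on $\mathbb{R}^N_*$). Combining Steps 1 and 2 applied to $v=|u|$ yields the stated formula, and I expect this combining step to be a short formal check rather than the main obstacle. The genuine technical point is the transfer of the classical chain rules for $|\cdot|$ and $(\cdot)^p$ from the $\partial_i$ setting to the $\partial_\rho$ setting, but because a radial derivative is literally a directional derivative in spherical coordinates, this reduces to the one-dimensional absolute continuity lemma on almost every ray, exactly as the preceding paragraphs indicate.
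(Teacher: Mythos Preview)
Your proposal is correct and follows the paper's approach exactly: the paper's ``proof'' is simply the phrase ``By combining the above, we find,'' where ``the above'' is precisely your Step~1 (absolute continuity on a.e.\ ray giving $\partial_\rho|u|=(\operatorname{sgn}u)\partial_\rho u$) and Step~2 (the power rule for nonnegative $u\in\widetilde{W}_{loc}^{1,p}$), together with Lemma~\ref{lm4} for the $f_u$ claim. One small slip: testing only against radially symmetric $\psi(x)=\varphi(|x|)$ would not by itself establish the distributional identity on all of $C_0^\infty(\Bbb{R}_*^N)$; rather, once your fiberwise argument gives the formula a.e., the global identity follows directly by Fubini in spherical coordinates against an arbitrary test function.
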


Since $f_{|u|}$ is continuous on $(0,\infty )$ when $u\in \widetilde{W}%
_{loc}^{1,1},$ the following two subsets are well defined: 
\begin{equation}
\widetilde{W}_{loc,-}^{1,1}:=\{u\in \widetilde{W}_{loc}^{1,1}:\underline{%
\lim }_{t\rightarrow \infty }f_{|u|}(t)=0\},  \label{17}
\end{equation}
\begin{equation}
\widetilde{W}_{loc,+}^{1,1}:=\{u\in \widetilde{W}_{loc}^{1,1}:\underline{%
\lim }_{t\rightarrow 0^{+}}f_{|u|}(t)=0\}.  \label{18}
\end{equation}
The sets $\widetilde{W}_{loc,\pm }^{1,1}$ are not closed under addition and
so are not vector spaces. They are exchanged into one another by Kelvin
transform. Various other properties are collected in the next lemma.

\begin{lemma}
\label{lm6}The following properties hold:\newline
(i) If $u\in \widetilde{W}_{loc,-}^{1,1}$ ($\widetilde{W}_{loc,+}^{1,1}$),
then $|u|\in \widetilde{W}_{loc,-}^{1,1}$ ($\widetilde{W}_{loc,+}^{1,1}$).%
\newline
(ii) $u\in \widetilde{W}_{loc,-}^{1,1}$ ($\widetilde{W}_{loc,+}^{1,1}$) $%
\Rightarrow u_{S}:=f_{u}\circ |\cdot |\in \widetilde{W}_{loc,-}^{1,1}$ ($%
\widetilde{W}_{loc,+}^{1,1}$) and $\partial _{\rho }u_{S}(x)=f_{u}^{\prime
}(|x|).$\newline
(iii) If $u\in \widetilde{W}_{loc}^{1,1}$ and $|x|^{a}|u|^{q}\in L^{1}(\Bbb{R%
}^{N})$ for some $a\in \Bbb{R}$ and some $q\geq 1,$ then $u\in \widetilde{W}%
_{loc,-}^{1,1}$ ($\widetilde{W}_{loc,+}^{1,1}$) if $a\geq -N$ ($a\leq -N$).
In particular (see (\ref{7})), $\widetilde{W}_{\{a,b\}}^{1,(q,p)}\subset 
\widetilde{W}_{loc,-}^{1,1}$ ($\widetilde{W}_{loc,+}^{1,1}$) if $a\geq -N$ ($%
a\leq -N$). \newline
(iv) If $u\in \widetilde{W}_{loc}^{1,1}$ is radially symmetric and $%
|x|^{a}|u|^{q}\in L^{1}(\Bbb{R}^{N})$ for some $a\in \Bbb{R}$ and $q>0,$
then $u\in \widetilde{W}_{loc,-}^{1,1}$ ($\widetilde{W}_{loc,+}^{1,1}$) if $%
a\geq -N$ ($a\leq -N$) In particular, if $u\in \widetilde{W}%
_{\{a,b\}}^{1,(q,p)}$ is radially symmetric, then $u\in \widetilde{W}%
_{loc,-}^{1,1}$ ($\widetilde{W}_{loc,+}^{1,1}$) if $a\geq -N$ ($a\leq -N$).
\end{lemma}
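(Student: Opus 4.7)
The plan is to dispatch the four parts in order, using Lemma \ref{lm5} throughout and reducing every ``$\widetilde{W}_{loc,+}^{1,1}$'' statement to its ``$\widetilde{W}_{loc,-}^{1,1}$'' counterpart via the Kelvin transform of Remark \ref{rm2}.

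Part (i) is essentially automatic: Lemma \ref{lm5} already furnishes $|u|\in \widetilde{W}_{loc}^{1,1}$, and $f_{||u||}=f_{|u|}$ pointwise, so the defining $\underline{\lim}$ condition transfers verbatim. For part (ii), I would apply the converse half of Lemma \ref{lm4} to $f_u\in W_{loc}^{1,1}(0,\infty)$ (obtained from $u\in \widetilde{W}_{loc}^{1,1}$ via the forward direction of the same lemma) to get $u_S\in \widetilde{W}_{loc}^{1,1}$, $\partial_\rho u_S(x)=f_u^\prime(|x|)$, and $f_{u_S}=f_u$, hence $f_{|u_S|}(t)=|f_u(t)|$. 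Jensen's inequality for the uniform probability measure on $\Bbb{S}^{N-1}$ gives $|f_u(t)|\le f_{|u|}(t)$, so the required $\underline{\lim}$ for $|u_S|$ is inherited from that of $|u|$.

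For part (iii), I would write the hypothesis in polar coordinates as
\[
\int_{\Bbb{R}^N}|x|^a|u|^q\,dx \;=\; N\omega_N\int_0^\infty t^{a+N-1}\Phi(t)\,dt<\infty,\qquad \Phi(t):=(N\omega_N)^{-1}\int_{\Bbb{S}^{N-1}}|u(t\sigma)|^q\,d\sigma.
\]
When $a\ge -N$, so that $a+N-1\ge -1$, a direct contradiction argument yields $\underline{\lim}_{t\to\infty}\Phi(t)=0$: otherwise $\Phi(t)\ge\varepsilon>0$ for all large $t$ and the left-hand integral would dominate $\varepsilon\int_{t_0}^\infty t^{a+N-1}\,dt=\infty$. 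Since $q\ge 1$, Jensen's inequality gives $f_{|u|}(t)^q\le \Phi(t)$, hence $\underline{\lim}_{t\to\infty}f_{|u|}(t)=0$ as well. The case $a\le -N$ reduces to the previous one by applying it to the Kelvin transform $v(x):=u(|x|^{-2}x)$: a direct computation gives $f_{|v|}(t)=f_{|u|}(1/t)$ (so $v\in \widetilde{W}_{loc,-}^{1,1}\Leftrightarrow u\in \widetilde{W}_{loc,+}^{1,1}$), while the standard change of variable $y=|x|^{-2}x$ turns $|x|^a|u|^q\in L^1$ into $|x|^{-2N-a}|v|^q\in L^1$ with $-2N-a\ge -N$. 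The ``in particular'' assertion is immediate, since $|x|^b$ is bounded above and below on every compact subset of $\Bbb{R}_*^N$, yielding $\widetilde{W}_{\{a,b\}}^{1,(q,p)}\subset \widetilde{W}_{loc}^{1,1}$.

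Part (iv) is the radial specialization: when $u(x)=f(|x|)$ one has $f_{|u|}(t)=|f(t)|$ directly, so no appeal to Jensen is needed, and the same divergence estimate applied to $\int_0^\infty t^{a+N-1}|f(t)|^q\,dt$ now works for arbitrary $q>0$. The step I expect to require the most care is the Kelvin-transform reduction in (iii): one must verify cleanly that the transform acts on the ambient space $\widetilde{W}_{loc}^{1,1}$ (not only on the specific weighted spaces of Remark \ref{rm2}) and transforms both the weighted integrability condition and the defining $\underline{\lim}$ in the manner claimed.
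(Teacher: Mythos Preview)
Your proposal is correct and follows essentially the same approach as the paper: Lemma \ref{lm5} for (i), Lemma \ref{lm4} plus the pointwise bound $|f_u|\le f_{|u|}$ for (ii), a contradiction argument via the divergence of $\int t^{a+N-1}\,dt$ combined with Jensen's inequality $(f_{|u|})^q\le f_{|u|^q}$ for (iii), and the observation $f_{|u|^q}=(f_{|u|})^q$ in the radial case for (iv), with the $a\le -N$ cases handled by Kelvin transform. The only cosmetic difference is that in (iii) the paper runs the contradiction directly on $f_{|u|}$ and invokes Jensen inside the chain of inequalities, whereas you first establish $\underline{\lim}\,\Phi=0$ and then transfer to $f_{|u|}$; also, for the ``in particular'' clause of (iv) the paper takes a slightly longer detour through $W_{loc}^{1,p}(\Bbb{R}_*^N)$, but your simpler route (use $u\in L_{loc}^1$ from the definition and $\partial_\rho u\in L_{loc}^p\subset L_{loc}^1$) suffices.
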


\begin{proof}
(i) Use Lemma \ref{lm5} and the definitions (\ref{17}) and (\ref{18}).

(ii) That $u_{S}:=f_{u}\circ |\cdot |\in \widetilde{W}_{loc}^{1,1}$ and $%
\partial _{\rho }u_{S}(x)=f_{u}^{\prime }(|x|)$ follows from Lemma \ref{lm4}%
. Next, the remark that $f_{|u_{S}|}=|f_{u}|\leq f_{|u|}$ shows that if also 
$\underline{\lim }_{t\rightarrow \infty }f_{|u|}(t)=0$ (or $\underline{\lim }%
_{t\rightarrow 0^{+}}f_{|u|}(t)=0$), then $\underline{\lim }_{t\rightarrow
\infty }f_{|u_{S}|}(t)=0$ (or $\underline{\lim }_{t\rightarrow \infty
}f_{|u_{S}|}(t)(t)=0$).

(iii) Suppose $a\geq -N$ and, by contradiction, $u\notin \widetilde{W}%
_{loc,-}^{1,1}.$ Then, $f_{|u|}(t)\geq \ell >0$ for $t\geq T$ and large $%
T>0. $ Thus, by (\ref{15}), $\ell ^{q}\leq (f_{|u|})^{q}(t)\leq
f_{|u|^{q}}(t)$ for $t\geq T,$ so that $\int_{|x|\geq
T}|x|^{a}|u|^{q}=N\omega _{N}\int_{T}^{\infty }t^{a+N-1}f_{|u|^{q}}(t)dt\geq
N\omega _{N}\ell ^{q}\int_{T}^{\infty }t^{a+N-1}dt=\infty $ since $a\geq -N.$
This contradicts $|x|^{a}|u|^{q}\in L^{1}(\Bbb{R}^{N}).$ The case when $%
a\leq -N$ follows by Kelvin transform and$.$ the ``in particular'' part is
obvious.

(iv) If $u$ is radially symmetric, then $f_{|u|^{q}}=(f_{|u|})^{q}$ for
every $q>0,$ so that the contradiction argument in the proof of (iii) works
when $q>0,$ not just $q\geq 1.$ The ``in particular'' part is clear if we
show that $u\in \widetilde{W}_{loc}^{1,1}.$ To see this, note that $u\in 
\widetilde{W}_{\{a,b\}}^{1,(q,p)}$ implies $\partial _{\rho }u\in
L_{loc}^{p}(\Bbb{R}_{*}^{N}),$ which, by radial symmetry, implies $\nabla
u\in L_{loc}^{p}(\Bbb{R}_{*}^{N}).$ Thus, $u\in W_{loc}^{1,p}(\Bbb{R}
_{*}^{N})$ (\cite[p. 7]{Ma85}) and $W_{loc}^{1,p}(\Bbb{R}_{*}^{N})\subset 
\widetilde{W}_{loc}^{1,1}$ is obvious.
\end{proof}

If $u\in L_{loc}^{1}(\Bbb{R}_{*}^{N})$ is radially symmetric, then $%
u(x)=f_{u}(|x|).$ This justifies referring to the function $u_{S}$ in part
(ii) of Lemma \ref{lm6} as the ``radial symmetrization'' of $u.$

\begin{lemma}
\label{lm7}Let $a,b\in \Bbb{R}$ and $1\leq p,q<\infty $ be given. If $u\in 
\widetilde{W}_{\{a,b\}}^{1,(q,p)},$ then:\newline
(i) $|u|\in \widetilde{W}_{\{a,b\}}^{1,(q,p)}$ and $||\,|u|%
\,||_{a,q}=||u||_{a,q},$ $||\partial _{\rho }|u|\,||_{b,p}=||\,\partial
_{\rho }u\,||_{b,p}.$ If also $u$ is radially symmetric, this remains true
when $0<q<1.$\newline
(ii) $u_{S}\in \widetilde{W}_{\{a,b\}}^{1,(q,p)}$ and $||u_{S}||_{a,q}\leq
||u||_{a,q},||\,\partial _{\rho }u_{S}||_{b,p}\leq ||\partial _{\rho
}u||_{b,p}.$\newline
\end{lemma}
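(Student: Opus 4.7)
\smallskip
\noindent\textbf{Proof plan for Lemma \ref{lm7}.}

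\smallskip

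For part (i), the plan is to invoke the chain rule for $|u|$ developed in the paragraphs immediately preceding Lemma \ref{lm5}. Since $u\in \widetilde{W}_{\{a,b\}}^{1,(q,p)}$ satisfies $u\in L_{loc}^{1}(\Bbb{R}_{*}^{N})$ (by definition (\ref{7})) and $\partial_{\rho}u\in L_{loc}^{p}(\Bbb{R}_{*}^{N})\subset L_{loc}^{1}(\Bbb{R}_{*}^{N})$ (since $p\geq 1$), we have $u\in \widetilde{W}_{loc}^{1,1}$. The cited chain rule therefore gives $|u|\in \widetilde{W}_{loc}^{1,1}$ with $\partial_{\rho}|u|=(\operatorname{sgn}u)\partial_{\rho}u$ a.e., so that $|\partial_{\rho}|u||=|\partial_{\rho}u|$ a.e. Both quasi-norm equalities follow immediately: the first because $||u||_{a,q}$ depends only on $|u|$, the second because the integrands defining $||\partial_{\rho}|u|\,||_{b,p}$ and $||\partial_{\rho}u||_{b,p}$ are equal a.e. For the radially symmetric case with $0<q<1$, the same conclusion is obtained by passing to the one-dimensional setting: by Lemma \ref{lm4}, $u(x)=f(|x|)$ with $f\in W_{loc}^{1,p}(0,\infty)$, and the standard one-dimensional chain rule yields $|f|\in W_{loc}^{1,p}(0,\infty)$ with $|f|'=(\operatorname{sgn}f)f'$, so that $|u|(x)=|f|(|x|)$ inherits the required properties via the converse part of Lemma \ref{lm4}.

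For part (ii), the plan is to combine Lemma \ref{lm4} (and formula (\ref{16})) with Jensen's inequality for the spherical averaging operator. By Lemma \ref{lm4}, $f_{u}\in W_{loc}^{1,p}(0,\infty)$, and, by its converse part, $u_{S}(x):=f_{u}(|x|)\in \widetilde{W}_{loc}^{1,p}$ with $\partial_{\rho}u_{S}(x)=f_{u}'(|x|)$, where (\ref{16}) gives $f_{u}'(t)=(N\omega_{N})^{-1}\int_{\Bbb{S}^{N-1}}\partial_{\rho}u(t\sigma)\,d\sigma$. Applying Jensen's inequality with respect to the normalized measure $(N\omega_{N})^{-1}d\sigma$ on $\Bbb{S}^{N-1}$ (which requires the exponent to be $\geq 1$, satisfied by both $q$ and $p$) gives, for a.e. $t>0$,
\begin{equation*}
|f_{u}(t)|^{q}\leq (N\omega_{N})^{-1}\int_{\Bbb{S}^{N-1}}|u(t\sigma)|^{q}d\sigma,\qquad |f_{u}'(t)|^{p}\leq (N\omega_{N})^{-1}\int_{\Bbb{S}^{N-1}}|\partial_{\rho}u(t\sigma)|^{p}d\sigma.
\end{equation*}
Multiplying the first by $t^{a+N-1}$, integrating over $(0,\infty)$, multiplying by $N\omega_{N}$, and passing to spherical coordinates yields $||u_{S}||_{a,q}^{q}\leq ||u||_{a,q}^{q}$. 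The identical procedure applied to the second inequality with weight $t^{b+N-1}$ and exponent $p$ yields $||\partial_{\rho}u_{S}||_{b,p}^{p}\leq ||\partial_{\rho}u||_{b,p}^{p}$. These two bounds give simultaneously the membership $u_{S}\in \widetilde{W}_{\{a,b\}}^{1,(q,p)}$ and the asserted inequalities.

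There is no serious obstacle: part (i) is a direct application of the chain rule discussed before Lemma \ref{lm5}, and part (ii) is a transparent application of Jensen's inequality in spherical coordinates, with Lemma \ref{lm4} handling the identification $\partial_{\rho}u_{S}(x)=f_{u}'(|x|)$ and formula (\ref{16}) expressing $f_{u}'$ as the spherical mean of $\partial_{\rho}u$. The only mildly technical point is the $0<q<1$ case in (i), where one bypasses the general chain rule discussion (whose standard derivation uses $L^{p}$ with $p\geq 1$) by exploiting the radial symmetry to reduce to a one-dimensional statement in which the exponent $q$ plays no role in the derivative structure.
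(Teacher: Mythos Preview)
Your proof is correct and follows the same approach as the paper: part (i) via the chain rule $\partial_\rho|u|=(\operatorname{sgn}u)\partial_\rho u$ once $u\in\widetilde{W}_{loc}^{1,1}$, and part (ii) via Jensen's inequality on the sphere together with formula (\ref{16}) and the identification $\partial_\rho u_S(x)=f_u'(|x|)$. Two small remarks: your first argument for (i) already covers the radially symmetric $0<q<1$ case (since $u\in L_{loc}^1(\Bbb{R}_*^N)$ is part of the definition (\ref{7}), so $u\in\widetilde{W}_{loc}^{1,1}$ regardless of $q$), making the separate one-dimensional reduction unnecessary; and in (ii) it is safer to invoke Lemma \ref{lm4} with exponent $1$ rather than $p$, since $u\in L_{loc}^p(\Bbb{R}_*^N)$ is not immediate when $q<p$.
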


\begin{proof}
(i) This follows from $u\in \widetilde{W}_{\{a,b\}}^{1,(q,p)}\subset 
\widetilde{W}_{loc}^{1,1}$ (see Lemma \ref{lm6} (iv) if $u$ is radially
symmetric and $0<q<1$) so that $\partial _{\rho }|u|=(\limfunc{sgn}
u)\partial _{\rho }u$ by Lemma \ref{lm5}.

(ii) Since $u_{S}(x)=f_{u}(|x|)$ and $f_{u}$ in (\ref{15}) is continuous, $%
u_{S}$ is continuous and so $u_{S}\in L_{loc}^{1}(\Bbb{R}_{*}^{N}).$ By (\ref
{15}), $|u_{S}(x)|^{q}\leq (N\omega _{N})^{-1}\int_{\Bbb{S}%
^{N-1}}|u(|x|\sigma )|^{q}d\sigma $ since $q\geq 1$ and, by (\ref{16}) and
part (ii) of Lemma \ref{lm6}, $|\partial _{\rho }u_{S}(x)|^{p}\leq $ $%
(N\omega _{N})^{-1}\int_{\Bbb{S}^{N-1}}|\partial _{\rho }u(|x|\sigma
)|^{p}d\sigma $ a.e. Therefore, $||u_{S}||_{a,q}\leq ||u||_{a,q}$ and $%
||\partial _{\rho }u_{S}||_{b,p}\leq ||\partial _{\rho }u||_{b,p}.$
\end{proof}

We complete this section with an inequality (Theorem \ref{th9}) which is the
basic tool for the proof of Lemmas \ref{lm12} and \ref{lm13} in the next
section.

\begin{lemma}
\label{lm8}Let $f\in W_{loc}^{1,1}(0,\infty ),f\geq 0$ and $\gamma \in \Bbb{R%
}$ be given. \newline
(i) If $\gamma \geq 1-N$ and $\underline{\lim }_{t\rightarrow \infty
}f(t)=0, $ then 
\begin{equation}
0\leq t^{N-1+\gamma }f(t)\leq \int_{t}^{\infty }\tau ^{N-1+\gamma
}|f^{\prime }(\tau )|d\tau \leq \infty ,\qquad \forall t>0.  \label{19}
\end{equation}
\newline
(ii) If $\gamma \leq 1-N$ and $\underline{\lim }_{t\rightarrow 0^{+}}f(t)=0,$
then 
\begin{equation}
0\leq t^{N-1+\gamma }f(t)\leq \int_{0}^{t}\tau ^{N-1+\gamma }|f^{\prime
}(\tau )|d\tau \leq \infty ,\qquad \forall t>0.  \label{20}
\end{equation}
\newline
\end{lemma}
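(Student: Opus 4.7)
The plan is to exploit the local absolute continuity of $f$ (after choosing the canonical representative in $W_{loc}^{1,1}(0,\infty)$) together with the sign of the exponent $N-1+\gamma$ to convert a pointwise value of $f$ into a tail (resp.\ near-origin) integral of $|f'|$.

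For part (i), I would fix $t>0$ and, for every $s>t$, write the fundamental theorem of calculus in the form
\[
f(t) \;\leq\; f(s) + \int_{t}^{s}|f'(\tau)|\,d\tau.
\]
Multiplying by $t^{N-1+\gamma}\geq 0$ and using that $\gamma\geq 1-N$ makes $\tau\mapsto \tau^{N-1+\gamma}$ non-decreasing, so that $t^{N-1+\gamma}\leq \tau^{N-1+\gamma}$ for $\tau\in[t,s]$, one obtains
\[
t^{N-1+\gamma}f(t) \;\leq\; t^{N-1+\gamma}f(s) + \int_{t}^{s}\tau^{N-1+\gamma}|f'(\tau)|\,d\tau.
\]
By the hypothesis $\underline{\lim}_{s\to\infty}f(s)=0$, one may choose a sequence $s=s_n\to\infty$ with $f(s_n)\to 0$; since $t$ (and hence $t^{N-1+\gamma}$) is fixed, the first term on the right tends to $0$, and the second increases to $\int_t^\infty \tau^{N-1+\gamma}|f'(\tau)|\,d\tau$ by monotone convergence, yielding \eqref{19}.

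Part (ii) follows from the mirror-image argument: for $0<s<t$ use $f(t)\leq f(s)+\int_s^t|f'(\tau)|\,d\tau$; now $\gamma\leq 1-N$ reverses the monotonicity, so $t^{N-1+\gamma}\leq \tau^{N-1+\gamma}$ for $\tau\in[s,t]$, and the same multiplication step gives
\[
t^{N-1+\gamma}f(t) \;\leq\; t^{N-1+\gamma}f(s) + \int_{s}^{t}\tau^{N-1+\gamma}|f'(\tau)|\,d\tau.
\]
Choosing $s=s_n\to 0^+$ along a sequence with $f(s_n)\to 0$ produces \eqref{20}. Alternatively, (ii) can be read off from (i) via the substitution $\tau\mapsto 1/\tau$ (the one-dimensional shadow of the Kelvin transform emphasized in Remark~\ref{rm2}), which swaps $\gamma$ with $2-2N-\gamma$ and exchanges the behaviors at $0$ and $\infty$.

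There is no real obstacle here; the only subtlety is choosing the absolutely continuous representative of $f$ so that the pointwise inequality makes sense for \emph{every} $t>0$ (not just a.e.), and observing that the right-hand sides are allowed to be $+\infty$, so no integrability assumption on $|f'|$ is needed.
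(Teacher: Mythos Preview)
Your proof is correct and follows essentially the same route as the paper's: both use the fundamental theorem of calculus to write $f(t)\leq f(s)+\int |f'|$, multiply by $t^{N-1+\gamma}$, exploit the monotonicity of $\tau\mapsto\tau^{N-1+\gamma}$ dictated by the sign of $N-1+\gamma$, and pass to the limit along a sequence realizing the $\liminf$. Your remarks on the absolutely continuous representative and the Kelvin-transform symmetry are apt additions but do not alter the argument.
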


\begin{proof}
(i) Given $t>0,$ let $T>t$ and write $f(t)=f(T)-\int_{t}^{T}f^{\prime }(\tau
)d\tau .$ Since $\gamma \geq 1-N$ implies $t^{N-1+\gamma }\leq \tau
^{N-1+\gamma }$ when $t\leq \tau ,$ this yields $t^{N-1+\gamma }f(t)\leq
t^{N-1+\gamma }f(T)+\int_{t}^{T}\tau ^{N-1+\gamma }|f^{\prime }(\tau )|d\tau
\leq t^{N-1+\gamma }f(T)+\int_{t}^{\infty }\tau ^{N-1+\gamma }|f^{\prime
}(\tau )|d\tau .$ Thus, (\ref{19}) follows from $f\geq 0$ and from $%
\underline{\lim }_{T\rightarrow \infty }f(T)=0.$

(ii) Given $t>0,$ let $0<\varepsilon <t$ and write $f(t)=f(\varepsilon
)+\int_{\varepsilon }^{t}f^{\prime }(\tau )d\tau .$ Since $\gamma \leq 1-N$
implies $t^{N-1+\gamma }\leq \tau ^{N-1+\gamma }$ when $t\geq \tau ,$ this
yields $t^{N-1+\gamma }f(t)\leq t^{N-1+\gamma }f(\varepsilon
)+\int_{\varepsilon }^{t}\tau ^{N-1+\gamma }|f^{\prime }(\tau )|d\tau \leq
t^{N-1+\gamma }f(\varepsilon )+\int_{0}^{t}\tau ^{N-1+\gamma }|f^{\prime
}(\tau )|d\tau .$ Thus, (\ref{20}) follows from $f\geq 0$ and from $%
\underline{\lim }_{\varepsilon \rightarrow 0}f(\varepsilon )=0.$
\end{proof}

In Theorem \ref{th9} below, the norm notation is only used for convenience
since all the norms may actually be infinite. In practice, this simply means
that in the inequalities, the finiteness of the right-hand side implies the
finiteness of the left-hand side, which therefore need not be assumed
separately. An alternate proof can be based on the case ``$q=\infty $'' of 
\cite[Theorem 2, p.40]{Ma85} and Kelvin transform, but the direct argument
used below is more explicit and not longer.

\begin{theorem}
\label{th9}Let $\gamma \in \Bbb{R}$ and $1\leq p<\infty $ be given. There is
a constant $C>0$ such that if $u\in \widetilde{W}_{loc}^{1,1}$ is radially
symmetric and either $\gamma >1-N$ and $u\in \widetilde{W}_{loc,-}^{1,1}$ or 
$\gamma <1-N$ and $u\in \widetilde{W}_{loc,+}^{1,1},$ then $%
||\,|x|^{N-1+\gamma }u||_{\infty }\leq C||\,|x|^{\gamma +\frac{N}{p^{\prime }%
}}\partial _{\rho }u||_{p}.$\newline
Furthermore, if $p=1,$ this inequality remains true when $\gamma =1-N.$
\end{theorem}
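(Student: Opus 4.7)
I reduce to a one-variable weighted inequality for the radial profile of $u$. Writing $u(x)=f(|x|)$, Lemma~\ref{lm4} gives $f\in W^{1,p}_{loc}(0,\infty)$ and $\partial_\rho u(x)=f'(|x|)$, and Lemma~\ref{lm7}(i) allows me to assume $f\geq 0$ since the inequality is invariant under $u\mapsto|u|$. By Remark~\ref{rm2}, the Kelvin transform $x\mapsto x/|x|^2$ interchanges the two cases of the theorem: it swaps $\widetilde W_{loc,-}^{1,1}\leftrightarrow \widetilde W_{loc,+}^{1,1}$ and sends the weight parameter $\gamma$ to $-2(N-1)-\gamma$, which flips the side of $1-N$, while the radial-derivative weight transforms consistently (this can be checked against the $b\mapsto 2p-2N-b$ rule of Remark~\ref{rm2}). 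So I assume henceforth that $\gamma>1-N$ and $u\in\widetilde W_{loc,-}^{1,1}$; by definition~(\ref{17}), $\underline{\lim}_{t\to\infty}f(t)=0$.

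Set $\alpha:=N-1+\gamma>0$ and $\beta:=p(\gamma+N/p')+N-1 = p\gamma+Np-1$. After passing to spherical coordinates, the inequality is equivalent to $\sup_{t>0}t^\alpha f(t) \leq C'\bigl(\int_0^\infty\tau^\beta|f'|^p\,d\tau\bigr)^{1/p}$. I apply Lemma~\ref{lm8}(i) at the endpoint $\gamma=1-N$ of its statement (the only remaining hypothesis, $\underline{\lim}_\infty f=0$, still holds) to obtain the unweighted bound $f(t)\leq\int_t^\infty|f'(\tau)|\,d\tau$. H\"older's inequality with the splitting $|f'(\tau)|=\tau^{-\beta/p}\cdot\tau^{\beta/p}|f'(\tau)|$ then gives
\[
f(t)\leq\left(\int_t^\infty\tau^{-\beta p'/p}\,d\tau\right)^{1/p'}\left(\int_t^\infty\tau^\beta|f'(\tau)|^p\,d\tau\right)^{1/p}.
\]
A short computation using $p'/p=p'-1$ shows $-\beta p'/p = -(p'\alpha+1)$, so the first factor equals $t^{-\alpha}/(p'\alpha)^{1/p'}$, finite precisely because $\alpha>0$. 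Multiplying by $t^\alpha$ and replacing $\int_t^\infty$ by $\int_0^\infty$ yields the desired inequality with $C=(p'\alpha)^{-1/p'}(N\omega_N)^{-1/p}$. For $p=1$, H\"older degenerates and I instead use Lemma~\ref{lm8}(i) directly, noting $\gamma+N/p'=\gamma$: $t^{N-1+\gamma}f(t)\leq\int_0^\infty\tau^{N-1+\gamma}|f'|\,d\tau = (N\omega_N)^{-1}\||x|^\gamma\partial_\rho u\|_1$. This estimate needs only $\gamma\geq 1-N$, which is why the endpoint $\gamma=1-N$ is admitted precisely in this case.

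\textbf{Main obstacle.} The subtle point is the borderline nature of the H\"older split. Applying H\"older directly to the weighted form $\int_t^\infty\tau^\alpha|f'|\,d\tau$ from Lemma~\ref{lm8}(i) produces the divergent tail $\int_t^\infty\tau^{-1}\,d\tau$, and one might conclude that the estimate fails. The remedy is to first discard the interior power $\tau^\alpha$ by taking the $\gamma=1-N$ endpoint of Lemma~\ref{lm8}, and then to let all of the weight migrate into the H\"older exponent $\beta$. The divergence is thereby moved to $\tau^{-p'\alpha-1}$, which is integrable iff $\alpha>0$, i.e.\ iff $\gamma>1-N$; this exactly matches the hypothesis of the theorem and pinpoints why the endpoint $\gamma=1-N$ is forbidden for $p>1$.
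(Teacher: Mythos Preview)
Your proof is correct and follows essentially the same route as the paper's: reduce to a nonnegative radial profile, invoke Lemma~\ref{lm8}(i) at the endpoint $\gamma=1-N$ to obtain $f(t)\le\int_t^\infty|f'|$, and then split via H\"older to recover the weight $\tau^{\beta}$ in the $L^p$ factor, with the same constant $C=[p'(N-1+\gamma)]^{-1/p'}$. The only cosmetic differences are that you appeal to the Kelvin transform to collapse the two cases (the paper treats $\gamma<1-N$ directly via (\ref{20})), and two small citation slips: Lemma~\ref{lm4} gives only $f\in W^{1,1}_{loc}(0,\infty)$ here since the hypothesis is $u\in\widetilde W^{1,1}_{loc}$, and the reduction to $u\ge 0$ should cite Lemma~\ref{lm5} together with Lemma~\ref{lm6}(i) rather than Lemma~\ref{lm7}(i), which lives in the $\widetilde W^{1,(q,p)}_{\{a,b\}}$ setting.
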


\begin{proof}
Suppose first $p=1$ and $\gamma \geq 1-N$ and let $u\in \widetilde{W}
_{loc,-}^{1,1}.$ By part (i) of Lemma \ref{lm6} and Lemma \ref{lm5}, we may
and shall assume $u\geq 0$ with no loss of generality since $||\,|x|^{\gamma
}\partial _{\rho }u||_{1}$ and $||\,|x|^{N-1+\gamma }u||_{\infty }$ are
unchanged when $u$ is replaced by $|u|.$

By Lemma \ref{lm4}, $u(x)=f_{u}(|x|)$ with $f_{u}\in \widetilde{W}%
_{loc}^{1,1}(0,\infty ),f_{u}\geq 0$ and $\underline{\lim }_{t\rightarrow
\infty }f_{|u|}(t)=0$ by (\ref{17}). Thus, $||\,|x|^{N-1+\gamma }u||_{\infty
}=\sup_{t>0}t^{N-1+\gamma }f_{u}(t)$ and $\left\| \,\,|x|^{\gamma }\partial
_{\rho }u\right\| _{1}=$\linebreak $\int_{0}^{\infty }\tau ^{N-1+\gamma
}|f_{u}^{\prime }(\tau )|d\tau $ since $f_{u}^{\prime }(|x|)=\partial _{\rho
}u(x)$ (use $u=u_{S}$ and Lemma \ref{lm6} (ii)). Hence, it suffices to show
that $t^{N-1+\gamma }f_{u}(t)\leq \int_{0}^{\infty }\tau ^{N-1+\gamma
}|f_{u}^{\prime }(\tau )|d\tau \leq \infty $ for every $t>0,$ which follows
at once from (\ref{19}) for $f=f_{u}.$ If $\gamma \leq 1-N$ and $u\in 
\widetilde{W}_{loc,+}^{1,1},$ use (\ref{20}) instead of (\ref{19}).

Now, let $1<p<\infty .$ Once again we assume $u\geq 0$ with no loss of
generality, so that $u(x)=f_{u}(|x|)$ with $f_{u}\in W_{loc}^{1,1}(0,\infty
) $ and $f_{u}\geq 0.$ It suffices to prove 
\begin{equation}
t^{N-1+\gamma }f_{u}(t)\leq C\int_{0}^{\infty }|f_{u}^{\prime }(\tau
)|^{p}\tau ^{pN+p\gamma -1}d\tau ,  \label{21}
\end{equation}
for every $t>0.$ We merely show how the proof when $p=1$ above can be
modified to yield this inequality.

Suppose $\gamma >1-N$ and let $u\in \widetilde{W}_{loc,-}^{1,1}.$ The
inequality (\ref{19}) with $\gamma =1-N$ -which is allowed in Lemma \ref{lm8}
- and $f=f_{u}$ yields $f_{u}(t)\leq \int_{t}^{\infty }|f_{u}^{\prime }(\tau
)|d\tau $ for every $t>0.$ Write $|f_{u}^{\prime }(\tau )|=\left(
|f_{u}^{\prime }(\tau )|\tau ^{N-1+\gamma +\frac{1}{p^{\prime }}}\right)
\tau ^{1-N-\gamma -\frac{1}{p^{\prime }}}$ and, since $\gamma >1-N,$ use
H\"{o}lder's inequality to get $f_{u}(t)\leq Ct^{1-N-\gamma }\left(
\int_{t}^{\infty }|f_{u}^{\prime }(\tau )|^{p}\tau ^{pN+p\gamma -1}d\tau
\right) ^{\frac{1}{p}}$ with $C:=[p^{\prime }(\gamma +N-1)]^{-\frac{1}{%
p^{\prime }}},$ which is stronger than (\ref{21}). If $\gamma <1-N$ and $%
u\in \widetilde{W}_{loc,+}^{1,1},$ follow the same procedure, but starting
with the inequality (\ref{20}).
\end{proof}

\section{Embedding theorem for radially symmetric functions\label{radial}}

In this section, we give necessary and sufficient conditions for the
continuity of the embedding of the subspace of $\widetilde{W}
_{\{a,b\}}^{1,(q,p)}$ of radially symmetric functions into $L^{r}(\Bbb{R}
^{N};|x|^{c}dx).$ In principle, this can of course be done by reduction to
the half-line, which is reflected in the proofs, but we have found no
expository or technical advantage in doing so explicitly. Our first task
will be to make sure that the cut-off operation is continuous. As a
preamble, we need:

\begin{lemma}
\label{lm10}Let $\Omega $ denote a bounded open annulus centered at $0\notin 
\overline{\Omega }$ and let $a,b\in \Bbb{R}$ and $1\leq p<\infty ,0<q<\infty 
$ be given$.$ There is a constant $C>0$ such that $||u||_{p,\Omega }\leq
C||u||_{\{a,b\},(q,p)}$ for every radially symmetric $u\in \widetilde{W}%
_{\{a,b\}}^{1,(q,p)}.$
\end{lemma}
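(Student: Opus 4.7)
My plan is to reduce the estimate to a one-dimensional Sobolev-type inequality on the bounded interval underlying the annulus, using that all the relevant power weights are comparable to positive constants on a compact set disjoint from the origin. Write $\Omega =\{x:r_{1}<|x|<r_{2}\}$ with $0<r_{1}<r_{2}<\infty $ and set $I:=[r_{1},r_{2}]$. For a radially symmetric $u(x)=f(|x|)$, I first observe that the hypothesis $\partial _{\rho }u\in L^{p}(\Bbb{R}^{N};|x|^{b}dx)$, together with $p\geq 1$ and the local boundedness of $|x|^{b}$ on $\Bbb{R}_{\ast }^{N}$, forces $\partial _{\rho }u\in L_{loc}^{1}(\Bbb{R}_{\ast }^{N})$. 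Combined with the standing requirement $u\in L_{loc}^{1}(\Bbb{R}_{\ast }^{N})$ in (\ref{7}), this places $u$ in $\widetilde{W}_{loc}^{1,1}$, so Lemma \ref{lm4} identifies $f_{u}=f\in W_{loc}^{1,1}(0,\infty )$ and $\partial _{\rho }u(x)=f^{\prime }(|x|)$ a.e., whence $f$ is absolutely continuous on $I$. Passing to spherical coordinates, the comparability of $|x|^{a}$, $|x|^{b}$ and the Jacobian $t^{N-1}$ to positive constants on $I$ gives
\begin{equation*}
\|f\|_{L^{q}(I)}\leq C\|u\|_{a,q},\qquad \|f^{\prime }\|_{L^{p}(I)}\leq C\|\partial _{\rho }u\|_{b,p},\qquad \|u\|_{p,\Omega }\leq C\|f\|_{L^{p}(I)},
\end{equation*}
so the task reduces to the one-dimensional estimate
\begin{equation*}
\|f\|_{L^{p}(I)}\leq C\bigl(\|f\|_{L^{q}(I)}+\|f^{\prime }\|_{L^{p}(I)}\bigr).
\end{equation*}

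To establish this, my approach is to control $\|f\|_{L^{\infty }(I)}$. Since $f$ is continuous on the compact set $I$, the nonnegative function $|f|$ attains its minimum at some $t_{0}\in I$, and from $|f(t_{0})|^{q}\leq |f(t)|^{q}$ for every $t\in I$, integration over $I$ immediately gives $|f(t_{0})|\leq (r_{2}-r_{1})^{-1/q}\|f\|_{L^{q}(I)}$. The fundamental theorem of calculus then yields $|f(t)|\leq |f(t_{0})|+\int_{I}|f^{\prime }(\tau )|\,d\tau $ for every $t\in I$, and H\"{o}lder's inequality on the bounded interval bounds the last integral by $(r_{2}-r_{1})^{1/p^{\prime }}\|f^{\prime }\|_{L^{p}(I)}$. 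Taking the supremum over $t$ gives $\|f\|_{L^{\infty }(I)}\leq C(\|f\|_{L^{q}(I)}+\|f^{\prime }\|_{L^{p}(I)})$, and then the trivial bound $\|f\|_{L^{p}(I)}\leq |I|^{1/p}\|f\|_{L^{\infty }(I)}$ closes the one-dimensional estimate.

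The point requiring the most care is that the argument must accommodate the range $0<q<1$ allowed by the statement, where $L^{q}(I)$ is merely a quasi-normed space and no standard Sobolev embedding applies. The minimum-selection step above depends only on the continuity of $f$ and on $q>0$, so it handles this case without modification; this is the decisive technical observation that lets a single argument cover all admissible values of $q$.
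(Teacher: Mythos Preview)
Your proof is correct and takes a genuinely different route from the paper. The paper first observes that radial symmetry forces $u\in W^{1,p}(\Omega)$ (via \cite[p.~7]{Ma85}) and then proves the $N$-dimensional inequality $\|v\|_{p,\Omega}\leq C(\|v\|_{q,\Omega}+\|\nabla v\|_{p,\Omega})$ for \emph{all} $v\in W^{1,p}(\Omega)$ by a contradiction argument based on the compactness of $W^{1,p}(\Omega)\hookrightarrow L^{p}(\Omega)$, extracting an a.e.\ convergent subsequence to handle $0<q<1$. You instead exploit the radial symmetry at the outset to reduce to a one-dimensional estimate on $I=[r_{1},r_{2}]$, and then give a completely elementary argument (minimum of $|f|$ plus the fundamental theorem of calculus) that yields an explicit constant and works uniformly for all $q>0$ without any compactness. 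Your approach is shorter and constructive; the paper's approach is non-constructive but establishes the auxiliary inequality in greater generality (for non-radial $v$), which it later notes is relevant when $q\geq p$ or when the space $\widetilde{W}_{\{a,b\}}^{1,(q,p)}$ is replaced by $W_{\{a,b\}}^{1,(q,p)}(\Bbb{R}_{*}^{N})$.
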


\begin{proof}
Let $u\in \widetilde{W}_{\{a,b\}}^{1,(q,p)}$ be radially symmetric. We
already pointed out in the Introduction that $\widetilde{W}
_{\{a,b\}}^{1,(q,p)}$ and $W_{\{a,b\}}^{1,(q,p)}(\Bbb{R}_{*}^{N})$ have the
same radially symmetric functions, with the same induced (quasi) norms.
Since $u\in W_{\{a,b\}}^{1,(q,p)}(\Bbb{R}_{*}^{N})$ implies $\nabla u\in
L_{loc}^{p}(\Bbb{R}_{*}^{N}),$ it follows that $u\in W_{loc}^{1,p}(\Bbb{R}
_{*}^{N})$ (this was already used in the proof of Lemma \ref{lm6} (iv)) and
hence that $u\in W^{1,p}(\Omega ).$ Thus, it suffices to prove that $%
||v||_{p,\Omega }\leq C(||v||_{q,\Omega }+||\nabla v||_{p,\Omega })$ for
every $v\in W^{1,p}(\Omega ).$

This is common knowledge when $q\geq 1,$ but since only $q>0$ is assumed, we
give a proof for completeness. By contradiction, assume that there is a
sequence $(v_{n})\subset W^{1,p}(\Omega )$ such that $||v_{n}||_{p,\Omega
}=1 $ and $\lim_{n\rightarrow \infty }||v_{n}||_{q,\Omega }+||\nabla
v_{n}||_{p,\Omega }=0.$ Since $(v_{n})$ is bounded in $W^{1,p}(\Omega )$ and
the embedding $W^{1,p}(\Omega )\hookrightarrow L^{p}(\Omega )$ is compact
(even when $p=1$), there is $v\in L^{p}(\Omega )$ and a subsequence, still
denoted by $(v_{n}),$ such that $v_{n}\rightarrow v$ in $L^{p}(\Omega )$ and
that $v_{n}\rightarrow v$ a.e. on $\Omega .$ Obviously, $||v||_{p}=1.$

Now, since $|v_{n}|^{q}\rightarrow 0$ in $L^{1}(\Omega ),$ there is a
subsequence $(v_{n_{k}})$ such that $|v_{n_{k}}|^{q}\rightarrow 0$ a.e. on $%
\Omega .$ Thus, $v_{n_{k}}\rightarrow 0$ a.e. on $\Omega ,$ so that $v=0,$
which contradicts $||v||_{p}=1.$
\end{proof}

With the help of Lemma \ref{lm10}, we can now prove that truncation has the
expected properties in the subspace of $\widetilde{W}_{\{a,b\}}^{1,(q,p)}$
of radially symmetric functions.

\begin{lemma}
\label{lm11}Let $a,b\in \Bbb{R}$ and $1\leq p<\infty ,0<q<\infty $ be given
and let $\varphi \in C^{\infty }(\Bbb{R}^{N})$ be radially symmetric,
constant on a neighborhood of $0$ and constant outside a ball. Then, the
multiplication by $\varphi $ is continuous on the subspace of radially
symmetric functions of $\widetilde{W}_{\{a,b\}}^{1,(q,p)}.$
\end{lemma}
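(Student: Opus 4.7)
The plan is to bound $\|\varphi u\|_{\{a,b\},(q,p)} = \|\varphi u\|_{a,q} + \|\partial_\rho(\varphi u)\|_{b,p}$ term by term, using Lemma \ref{lm10} to handle the one piece that is not immediately dominated by the source norm. First I note that $u\in L^1_{loc}(\mathbb{R}_*^N)$ and $\partial_\rho u\in L^p_{loc}(\mathbb{R}_*^N)$, while $\varphi,\partial_\rho\varphi\in C^\infty(\mathbb{R}^N)$, so $\varphi u$ is a distribution on $\mathbb{R}_*^N$ and the product rule
\[
\partial_\rho(\varphi u)=(\partial_\rho\varphi)u+\varphi\,\partial_\rho u
\]
holds in the distributional sense. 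Radial symmetry of $\varphi$ and $u$ makes $\varphi u$ radially symmetric.

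For the $L^q$ part, the boundedness of $\varphi$ gives at once $\|\varphi u\|_{a,q}\leq \|\varphi\|_\infty\|u\|_{a,q}$. For the second term in the product rule, similarly $\|\varphi\,\partial_\rho u\|_{b,p}\leq \|\varphi\|_\infty\|\partial_\rho u\|_{b,p}$. The only nontrivial piece is $\|(\partial_\rho\varphi)u\|_{b,p}$. Here the key observation is that $\partial_\rho\varphi$ vanishes both on a neighborhood of $0$ (where $\varphi$ is constant) and outside a ball (where $\varphi$ is again constant), so $\limfunc{Supp}(\partial_\rho\varphi)$ is contained in some closed annulus $\overline{\Omega}$ centered at $0$ with $0\notin\overline{\Omega}$. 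On such an annulus both $|x|^b$ and $\partial_\rho\varphi$ are bounded, so
\[
\|(\partial_\rho\varphi)u\|_{b,p}\leq C\|u\|_{p,\Omega}
\]
for a constant $C$ depending only on $\varphi$, $b$, and $\Omega$.

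The last step is where Lemma \ref{lm10} enters decisively: since $u$ is radially symmetric and in $\widetilde{W}_{\{a,b\}}^{1,(q,p)}$, Lemma \ref{lm10} yields $\|u\|_{p,\Omega}\leq C\|u\|_{\{a,b\},(q,p)}$, and combining the three estimates gives $\|\varphi u\|_{\{a,b\},(q,p)}\leq C\|u\|_{\{a,b\},(q,p)}$ as desired. The main obstacle, though a mild one, is exactly the need to control an unweighted $L^p$ norm on an annulus by the source $(q,p)$-norm when $q$ bears no relation to $p$ (in particular, when $0<q<1$, so that the source is only a quasi-norm); this is precisely what Lemma \ref{lm10} was prepared to supply, by exploiting the radial symmetry of $u$ to promote it to an element of $W^{1,p}_{loc}(\mathbb{R}_*^N)$ and then using compactness of $W^{1,p}(\Omega)\hookrightarrow L^p(\Omega)$.
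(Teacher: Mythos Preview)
Your proof is correct and follows essentially the same route as the paper's: bound the $L^q$ piece and the $\varphi\,\partial_\rho u$ piece by $\|\varphi\|_\infty$, observe that $\partial_\rho\varphi$ is supported in an annulus away from the origin, and invoke Lemma~\ref{lm10} to control $\|u\|_{p,\Omega}$ by the source norm. The only difference is that you spell out the distributional product rule and the rationale behind Lemma~\ref{lm10} a bit more explicitly, which is fine.
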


\begin{proof}
If $u\in \widetilde{W}_{\{a,b\}}^{1,(q,p)},$ then $||\varphi u||_{a,q}\leq
||\varphi ||_{\infty }||u||_{a,q}$ and $\partial _{\rho }(\varphi u)=\varphi
\partial _{\rho }u+(\partial _{\rho }\varphi )u.$ Clearly, $||\varphi
\partial _{\rho }u||_{b,p}\leq ||\varphi ||_{\infty }||\partial _{\rho
}u||_{b,p}.$ To evaluate $||(\partial _{\rho }\varphi )u||_{b,p}$ when $u$
is radially symmetric, note that $\limfunc{Supp}\partial _{\rho }\varphi $
is contained in a bounded open annulus $\Omega $ centered at $0\notin 
\overline{\Omega }.$ Thus, $||(\partial _{\rho }\varphi )u||_{b,p}\leq
C||\partial _{\rho }\varphi ||_{\infty }||u||_{\{a,b\},(q,p)}$ by Lemma \ref
{lm10} since $|x|^{b}$ is bounded on $\Omega .$ Altogether, this yields $%
||\varphi u||_{\{a,b\},(q,p)}\leq C||u||_{\{a,b\},(q,p)}.$
\end{proof}

The radial symmetry is unimportant in Lemmas \ref{lm10} and \ref{lm11} if $%
q\geq p$ or if $\widetilde{W}_{\{a,b\}}^{1,(q,p)}$ is replaced by $%
W_{\{a,b\}}^{1,(q,p)}(\Bbb{R}_{*}^{N}),$ but it does matter if $q<p.$

We first address the embedding when $a$ and $b-p$ are on the same side of $%
-N.$

\begin{lemma}
\label{lm12}Let $a,b,c\in \Bbb{R}$ and $1\leq p<\infty ,0<q,r<\infty $ be
given. If $a$ and $b-p$ are on the same side of $-N$ (including $-N$), the
subspace of $\widetilde{W}_{\{a,b\}}^{1,(q,p)}$ of radially symmetric
functions is continuously embedded into $L^{r}(\Bbb{R}^{N};|x|^{c}dx)$ in
the following two cases (recall the definition of $c^{0}$ and $c^{1}$ in (%
\ref{1})): \newline
(i) $\frac{a+N}{q}\neq \frac{b-p+N}{p},r\leq q$ and $c$\thinspace is in the
open interval with\thinspace endpoints $c^{0}$\thinspace and\thinspace
\nolinebreak $\nolinebreak c^{1}.$\newline
(ii) $\frac{a+N}{q}\neq \frac{b-p+N}{p},b-p\neq -N$ if $p>1,r>q$ and $c$ is
in the semi-open interval with endpoints $c^{*}:=\left( 1-\frac{q}{r}\right)
c^{1}+\frac{q}{r}c^{0}$ (included) and $c^{1}$ (not included).\newline
\end{lemma}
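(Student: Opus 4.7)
The plan is to reduce to a one-dimensional estimate on $(0,\infty)$, use the pointwise decay from Theorem~\ref{th9} to bound $f_u$, and then split the radial integral at a free radius $T>0$ that is optimised at the end.

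\textbf{Reductions.} The Kelvin transform (Remark~\ref{rm2}) sends $(a,b-p,c)$ to $(-2N-a,-2N-(b-p),-2N-c)$ and exchanges $c^0$ with $c^1$, so I may assume $a\ge -N$ and $b-p\ge -N$. By Lemma~\ref{lm7}(i) I may assume $u\ge 0$, so that $u(x)=f(|x|)$ with $f:=f_u\ge 0$ in $W_{loc}^{1,p}(0,\infty)$ (Lemma~\ref{lm4}); by Lemma~\ref{lm6}(iv), $u\in\widetilde W_{loc,-}^{1,1}$. Setting $M:=\|\partial_\rho u\|_{b,p}$ and $K:=\|u\|_{a,q}$, the rescaling argument of Corollary~\ref{cor2} shows that the embedding is equivalent to the multiplicative inequality
\[
\|u\|_{c,r}\le C\,M^{\theta_c}K^{1-\theta_c},
\]
and up to the factor $N\omega_N$ one has $\|u\|_{c,r}^r=\int_0^\infty t^{c+N-1}f(t)^r\,dt$.

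\textbf{Main argument.} Provided $b-p>-N$, or $b-p=-N$ and $p=1$, Theorem~\ref{th9} with $\gamma=(b+N)/p-N$ yields the pointwise bound $f(t)\le C\,M\,t^{-(b-p+N)/p}$. For any $T>0$, I split the integral at $t=T$. Since $c<c^1$, the pointwise bound gives
\[
\int_T^\infty t^{c+N-1}f^r\,dt\le CM^r\int_T^\infty t^{c-c^1-1}\,dt=CM^r T^{c-c^1}.
\]
On $(0,T]$, the argument depends on the case. In \emph{case (i)}, where $r\le q$, Hölder with conjugate exponents $q/r$ and $q/(q-r)$ (interpreted trivially when $r=q$) gives
\[
\int_0^T t^{c+N-1}f^r\,dt\le K^r\Bigl(\int_0^T t^{q(c-c^0)/(q-r)-1}\,dt\Bigr)^{(q-r)/q}=CK^r T^{c-c^0},
\]
the inner integral being finite precisely because $c>c^0$. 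In \emph{case (ii)}, where $r>q$, I write $f^r=f^{r-q}f^q$ and apply the pointwise bound only to $f^{r-q}$, obtaining
\[
\int_0^T t^{c+N-1}f^r\,dt\le CM^{r-q}\int_0^T t^{(c-c^*)+a+N-1}f^q\,dt\le CM^{r-q}K^q T^{c-c^*},
\]
since $c\ge c^*$ makes $t^{c-c^*}\le T^{c-c^*}$ on $(0,T]$, and the identity $c^*=a+(r-q)(b-p+N)/p$ produces the weight $t^{a+N-1}$. In both cases the total bound reads
\[
\|u\|_{c,r}^r\le C\bigl(A\,T^{c-\bar c}+M^r T^{c-c^1}\bigr),
\]
with $\bar c<c<c^1$, so minimising over $T>0$ balances the two terms; a short computation identifies the minimum with $CM^{r\theta_c}K^{r(1-\theta_c)}$, which is the required inequality.

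\textbf{Main obstacle.} The one regime the above misses is $b-p=-N$ with $p>1$, which can occur only in case (i) and only with $a>-N$ (otherwise $\tfrac{a+N}{q}=\tfrac{b-p+N}{p}$). Here Theorem~\ref{th9} is not available, and the critical one-dimensional Hardy estimate $\int_t^{2t}|f'|\,d\tau\le CM$ does not give pointwise control of $f$. The remedy is a dyadic iteration: an $L^q$-mean value argument produces, at each step $k$, some $s_k\in(2^{k-1}t,2^kt)$ with $|f(s_k)|\le C(2^kt)^{-(a+N)/q}K$, and summing along the iteration yields the logarithmic pointwise bound
\[
|f(t)|\le CM\,\log_+\!\bigl(CK/(Mt^{(a+N)/q})\bigr).
\]
Since $c>c^1=-N$, this logarithm is integrable against $t^{c+N-1}$ on the set where it is positive, and the same $T$-optimisation recovers the multiplicative inequality. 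Handling this critical-exponent case is the main technical obstacle of the proof.
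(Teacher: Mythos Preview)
Your approach is correct and takes a genuinely different route from the paper's. Both rest on the pointwise decay of Theorem~\ref{th9}, but you split the radial integral at a free radius $T$ and optimise, whereas the paper cuts off at radius~$1$ via $\zeta$, treats $\zeta u$ and $(1-\zeta)u$ separately, and introduces auxiliary H\"older parameters $(k,\xi)$ (equivalently $s=kr-q$) to sweep out the admissible range of~$c$. Your route is more direct and yields the multiplicative form immediately. The main divergence is in the critical case $b-p=-N$, $p>1$: the paper exploits its cutoff by observing that on the bounded-support piece $\zeta u$ one may raise $b$ to any $\hat b>b$ with $-N<\hat c^{\,1}<c$ (since $|x|^{\hat b}\le|x|^{b}$ on $\overline B(0,1)$) and simply invoke the already-proved subcase, while $(1-\zeta)u$ needs only a single H\"older estimate using $c<c^0$. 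This two-line reduction replaces your dyadic--logarithmic bound, which also works (with the small correction $f(t)\le CM\bigl(1+\log_+\!\bigl(CK/(Mt^{(a+N)/q})\bigr)\bigr)$ and the concrete choice $T=t_0:=(CK/M)^{q/(a+N)}$, after which both pieces are $CM^{r\theta_c}K^{r(1-\theta_c)}$) but is noticeably heavier.

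One gap to close: as written, your estimates presuppose the ordering $c^0<c<c^1$ (you use $c<c^1$ for the tail and $c>c^0$ near the origin), yet after reducing to $a,b-p\ge-N$ the reverse ordering $c^1<c<c^0$ is equally possible---and is in fact forced when $b-p=-N$, since then $c^1=-N<c^0$. There you must swap the two halves: apply the pointwise bound on $(0,T]$ (finite because $c>c^1$) and H\"older on $[T,\infty)$ (finite because $c<c^0$); in case~(ii) the swap reads $t^{c-c^*}\le T^{c-c^*}$ on $[T,\infty)$ together with the pointwise bound on $(0,T]$. Finally, the Kelvin transform does not ``exchange $c^0$ with $c^1$'': it sends $c^j\mapsto-2N-c^j$, which preserves the hypotheses but not the labels.
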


\begin{proof}
By Kelvin transform (Remark \ref{rm2}), we may assume $a\geq -N$ and $%
b-p\geq -N$ and, by Lemma \ref{lm7}, $u\geq 0.$ By Lemma \ref{lm11} and with 
$\zeta $ as in subsection \ref{notation}, it suffices to show that $%
||(1-\zeta )u||_{c,r}\leq C||(1-\zeta )u||_{\{a,b\},(q,p)}$ and that $%
||\zeta u||_{c,r}\leq C||\zeta u||_{\{a,b\},(q,p)}$ for some constant $C>0$
independent of $u.$

(i) The assumption $0<r\leq q$ is retained.

\textit{Case (i-{\scriptsize 1}):} $b-p>-N$ or $p=1$ and $b-1\geq -N.$

We first prove $||v||_{c,r}\leq C||v||_{\{a,b\},(q,p)}$ when $v:=(1-\zeta )u$
($\geq 0$). Given $\xi \in \Bbb{R}$ and $c\in \Bbb{R},$ write $%
|x|^{c}v^{r}=|x|^{-\xi }\left( |x|^{c+\xi }v^{r}\right) .$ Since $\limfunc{
Supp}v\subset \Bbb{R}^{N}\backslash B(0,\frac{1}{2})$ and by H\"{o}lder's
inequality, $||v||_{c,r}^{r}=\int_{\Bbb{R}^{N}}|x|^{c}v^{r}\leq \left( \int_{%
\Bbb{R}^{N}\backslash B(0,\frac{1}{2})}|x|^{-k^{\prime }\xi }\right) ^{\frac{
1}{k^{\prime }}}\left( \int_{\Bbb{R}^{N}}|x|^{k(c+\xi )}v^{kr}\right) ^{%
\frac{1}{k}},$ where $k>1$ is arbitrary.

If $k^{\prime }\xi >N,$ then $M_{k,\xi }:=\left( \int_{\Bbb{R}^{N}\backslash
B(0,\frac{1}{2})}|x|^{-k^{\prime }\xi }\right) ^{\frac{1}{k^{\prime }}
}<\infty $ and it suffices to find a majorization of $\int_{\Bbb{R}
^{N}}|x|^{k(c+\xi )}v^{kr}.$ Split $|x|^{k(c+\xi )}v^{kr}=\left(
|x|^{k(c+\xi )-a}v^{kr-q}\right) |x|^{a}v^{q},$ so that, if $kr-q>0,$ then $%
\int_{\Bbb{R}^{N}}|x|^{k(c+\xi )}v^{kr}\leq \left\| |x|^{k(c+\xi
)-a}v^{kr-q}\right\| _{\infty }\int_{\Bbb{R}^{N}}|x|^{a}v^{q}=\left\| |x|^{%
\frac{k(c+\xi )-a}{kr-q}}v\right\| _{\infty }^{kr-q}||v||_{a,q}^{q}.$

The next task is to majorize $\left\| |x|^{\frac{k(c+\xi )-a}{kr-q}
}v\right\| _{\infty }.$ This can be done by using Theorem \ref{th9}, as we
now explain. Suppose in addition that $k$ and $\xi $ are chosen so that $%
\frac{k(c+\xi )-a}{kr-q}=\frac{b-p+N}{p}.$ By part (iii) of Lemma \ref{lm6}, 
$v\in \widetilde{W}_{loc,-}^{1,1}$ since $a\geq -N.$ Next, if $\gamma :=%
\frac{b}{p}-\frac{N}{p^{\prime }},$ then $\gamma >1-N$ if $p>1$ since $%
b-p>-N $ and $\gamma \geq 1-N$ if $p=1$ since $b-1\geq -N.$ Thus, $\left\|
|x|^{\frac{b-p+N}{p}}v\right\| _{\infty }\leq C||\partial _{\rho
}v||_{b,p}<\infty $ by Theorem \ref{th9}. To summarize, 
\begin{equation}
||v||_{c,r}\leq M_{k,\xi }^{\frac{1}{r}}C^{1-\frac{q}{kr}}||\partial _{\rho
}v||_{b,p}^{1-\frac{q}{kr}}||v||_{a,q}^{\frac{q}{kr}},  \label{22}
\end{equation}
if $k$ and $\xi \in \Bbb{R}$ can be found such that $k^{\prime }\xi
>N,kr-q>0 $ (hence $k>1$ since $r\leq q$) and $\frac{k(c+\xi )-a}{kr-q}=%
\frac{b-p+N}{p}.$ By introducing $s:=kr-q>0,$ so that $k=\frac{s+q}{r},$ it
follows that $\frac{k(c+\xi )-a}{kr-q}=\frac{b-p+N}{p}$ if and only if $\xi =%
\frac{arp+rs(b-p+N)}{p(s+q),}-c$ and then $k^{\prime }\xi >N$ if and only if 
\begin{equation}
c<\frac{arp+rs(b-p+N)-Nps-Npq+Npr}{p(s+q)}=\frac{c^{1}s+c^{0}q}{s+q}.
\label{23}
\end{equation}

Thus, this inequality for some $s>0$ ensures that (\ref{22}) holds with $k:=%
\frac{s+q}{r}>1$ and $\xi =\frac{arp+rs(b-p+N)}{p(s+q)}-c.$ The right-hand
side of (\ref{23}) is a monotone function of $s>0$ with limits $c^{0}$ and $%
c^{1}$ as $s$ tends to $0$ and $\infty ,$ respectively. Therefore, $s>0$ can
be chosen so that (\ref{23}) holds if and only if $c<\max \left\{
c^{0},c^{1}\right\} $ and then, since $v=$ $(1-\zeta )u$ in (\ref{22}), the
arithmetic-geometric inequality yields $||(1-\zeta )u||_{c,r}\leq
C||(1-\zeta )u||_{\{a,b\},(q,p)}$ with $C>0$ independent of $u.$

If now $v:=\zeta u,$ then once again $v\in \widetilde{W}_{loc,-}^{1,1}$
because $v$ has bounded support. The same procedure, but with $k^{\prime
}\xi >N$ replaced by $k^{\prime }\xi <N,$ shows that $||v||_{c,r}=||\zeta
u||_{c,r}\leq C||u||_{\{a,b\},(q,p)}$ if $c>\min \left\{ c^{0},c^{1}\right\}
.$ Hence, both $||(1-\zeta )u||_{c,r}\leq C||u||_{\{a,b\},(q,p)}$ and $%
||\zeta u||_{c,r}\leq C||u||_{\{a,b\},(q,p)}$ hold when $c$ is in the open
interval with endpoints $c^{0}$ and $c^{1}.$

\textit{Case (i-{\scriptsize 2}):} $b-p=-N$ (and\footnote{%
The argument also works when $p=1.$} $p>1$).

If so, $\frac{a+N}{q}\neq \frac{b-p+N}{p}=0$ and $a\geq -N$ imply $a>-N$ and 
$-N=c^{1}<c<c^{0}.$ If $v:=(1-\zeta )u,$ then, $||v||_{c,r}\leq
C||v||_{a,q}\leq C||v||_{\{a,b\},(q,p)}$ by H\"{o}lder's inequality (use $%
|x|^{c}|v|^{r}=|x|^{c-\frac{ar}{q}}\left( |x|^{\frac{ar}{q}}(1-\zeta
)^{r}|u|^{r}\right) ,$ $\limfunc{Supp}(1-\zeta )\subset \Bbb{R}
^{N}\backslash B(0,\frac{1}{2})$ and $\frac{cq-ar}{q-r}<-N,$ i.e., $c<c^{0},$
if $r<q,$ or $c<a=c^{0}$ if $r=q$).

Next, choose $\hat{b}>b$ (so that $\hat{b}-p>-N$) such that $\hat{c}^{1}:=%
\frac{r(\hat{b}-p+N)}{p}-N<c$ and use Case (i-{\scriptsize 1}) with $b$
replaced by $\hat{b}$ -which changes $c^{1}$ into $\hat{c}^{1}$ but does not
change $c^{0}$- and $u$ replaced by $\zeta u.$ This yields $||\zeta
u||_{c,r}\leq C||\zeta u||_{\{a,\hat{b}\},(q,p)}\leq C||\zeta
u||_{\{a,b\},(q,p)}$ where the second inequality follows from $\hat{b}>b$
and $\limfunc{ Supp}\zeta \subset \overline{B}(0,1)$ (so that $||\nabla
(\zeta u)||_{\hat{b},p}\leq ||\nabla (\zeta u)||_{b,p}$).

(ii) The assumption $0<q<r$ is retained.

By part (iv) of Lemma \ref{lm6} and Lemma \ref{lm11}, $u,\zeta u$ and $%
(1-\zeta )u$ are in $\widetilde{W}_{loc}^{1,1}$ (even $\widetilde{W}
_{loc,-}^{1,1}$ since $a\geq -N$ and $\zeta u$ has bounded support) due to
radial symmetry, even when $q<1.$ Since $b-p\geq -N$ and $b-p\neq -N$ when $%
p>1,$ it follows that $b-p>-N$ if $p>1.$

The general procedure is the same as in Case (i-{\scriptsize 1}), with the
following difference: To prove (\ref{22}) with $v:=(1-\zeta )u$ ($\geq 0$), $%
k$ and $\xi \in \Bbb{R}$ must be found so that $k^{\prime }\xi >N,k>1$ and $%
\frac{k(c+\xi )-a}{kr-q}=\frac{b-p+N}{p}.$ With the same change of variable $%
k:=\frac{s+q}{r}$ as before, $k>1$ amounts to $s>r-q,$ so that (\ref{22})
holds for some $\xi $ if and only if $c<\max \left\{ c^{*},c^{1}\right\} $
(the supremum of the right-hand-side of (\ref{23}) when $s>r-q$).

Likewise, as in Case (i-{\scriptsize 1}), (\ref{22}) holds with $v=\zeta u$
if $c>\min \left\{ c^{*},c^{1}\right\} .$ This proves (ii) when $b-p>-N,$ or 
$p=1$ and $b-1\geq -N,$ and when $c$ is in the \emph{open} interval with
endpoints $c^{*}$ and $c^{1}.$ Thus, it only remains to discuss the case $%
c=c^{*}.$

This can be done by proving the inequality (\ref{22}) for $v=u$ radially
symmetric, with $k=1$ and $\xi =0$ (no need to split $u$). Specifically,
since $r>q$ (unlike in part (i)), write $||u||_{c^{*},r}^{r}=\int_{\Bbb{R}%
^{N}}|x|^{c^{*}}|u|^{r}=\int_{\Bbb{R}^{N}}|x|^{a+(r-q)\left( \frac{b-p+N}{p}
\right) }|u|^{r}\leq \left\| |x|^{\frac{b-p+N}{p}}u\right\| _{\infty
}^{r-q}||u||_{a,q}^{q}$ and notice $\left\| |x|^{\frac{b-p+N}{p}}u\right\|
_{\infty }\leq $ $C||\partial _{\rho }u||_{b,p}$ by using, as before,
Theorem \ref{th9} with $\gamma :=\frac{b}{p}-\frac{N}{p^{\prime }}.$ This
requires $b-p>-N$ if $p>1,$ but $b-1=-N$ is allowed if $p=1.$
\end{proof}

Part (ii) of Lemma \ref{lm12} is not optimal, but before improving it (in
Lemma \ref{lm15} below) we prove a similar result when $a$ and $b-p$ are on
opposite sides of $-N.$

\begin{lemma}
\label{lm13} Let $a,b,c\in \Bbb{R}$ and $1\leq p<\infty ,0<q,r<\infty $ be
given. If $a$ and $b-p$ are strictly on opposite sides of $-N,$ the subspace
of $\widetilde{W}_{\{a,b\}}^{1,(q,p)}$ of radially symmetric functions is
continuously embedded into $L^{r}(\Bbb{R}^{N};|x|^{c}dx)$ in the following
two cases: \newline
(i) $r\leq q$ and $c$ is in the open interval with endpoints $c^{0}$ and $%
-N. $\newline
(ii) $q<r,1-\frac{q}{r}<\theta _{-N}$ and\footnote{%
Since $-N$ is between $c_{0}$ and $c_{1}$ when $a$ and $b-p$ are on opposite
sides of $-N,$ it follows that $\theta _{-N}\in (0,1).$} $c$ is in the
semi-open interval with endpoints $c^{*}:=\left( 1-\frac{q}{r}\right) c^{1}+%
\frac{q}{r}c^{0}$ (included) and $-N$ (not included).
\end{lemma}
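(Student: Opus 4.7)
The plan is to mirror the architecture of Lemma~\ref{lm12}, with one crucial modification forced by the opposite-sign configuration. First, using the Kelvin transform isometry of Remark~\ref{rm2}, I reduce to the case $a > -N$ and $b - p < -N$; then $c^0 > -N > c^1$, a short calculation shows $c^* = a + (r-q)(b-p+N)/p = (1 - q/r)c^1 + (q/r)c^0$, and the condition $1 - q/r < \theta_{-N}$ in (ii) is equivalent to $c^* > -N$. Lemma~\ref{lm11} then lets me estimate $\|(1-\zeta)u\|_{c,r}$ and $\|\zeta u\|_{c,r}$ separately, where $\zeta$ is the cut-off from Subsection~\ref{notation}.

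The key technical point is that, although $u$ lies in $\widetilde{W}_{loc,-}^{1,1}$ by Lemma~\ref{lm6}(iv), I must invoke the $\gamma < 1-N$, $\widetilde{W}_{loc,+}^{1,1}$ branch of Theorem~\ref{th9}: with $\gamma = (b - Np + N)/p$, the condition $\gamma < 1-N$ is exactly $b - p < -N$, and the resulting estimate $\|\,|x|^{(b-p+N)/p} v\|_\infty \leq C\|\partial_\rho v\|_{b,p}$ is available whenever $v \in \widetilde{W}_{loc,+}^{1,1}$. For $v = (1-\zeta)u$ this holds automatically from the support. For $v = \zeta u$, I first subtract off the radial limit at zero: since $b + N - 1 < p - 1$, H\"older applied to $t^{(b+N-1)/p} f_u'(t) \in L^p(0,1)$ against $t^{-(b+N-1)/p}$ shows $f_u' \in L^1_{loc}$ near $0$, so $A := f_u(0^+)$ exists and is finite, and $\tilde u := \zeta u - A\zeta$ lies in $\widetilde{W}_{loc,+}^{1,1}$.

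Granting the bound $|A| \leq C\|u\|_{\{a,b\},(q,p)}$, the two pieces are handled as follows. For $\zeta u$, Theorem~\ref{th9} applied to $\tilde u$ gives $|\tilde u(x)| \leq C|x|^{-(b-p+N)/p}\|\partial_\rho \tilde u\|_{b,p}$, so $\int_{|x|\leq 1} |x|^c |\tilde u|^r \leq C\|\partial_\rho \tilde u\|_{b,p}^r \int_{|x|\leq 1} |x|^{c - c^1 - N}\,dx$, which is finite because $c > -N > c^1$; adding the easy term $|A|\|\zeta\|_{c,r}$ (finite since $c > -N$) completes this piece in both (i) and (ii). For $(1-\zeta)u$, in part~(i) ($r \leq q$) I take $k = q/r$ and $\xi = ar/q - c$ in the H\"older split of Lemma~\ref{lm12}: $kr - q = 0$, no $L^\infty$ factor is needed, and the condition $k'\xi > N$ reduces to $c < c^0$ (when $r = q$ the split collapses but $|x|^{c-a}$ is bounded on $\{|x|\geq 1/2\}$ since $c < a = c^0$). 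In part~(ii) ($q < r$) I split $|v|^r = |v|^{r-q}|v|^q$, bound $|v|^{r-q}$ via the Theorem~\ref{th9} estimate, and arrive at $\|v\|_{c,r}^r \leq C\|\partial_\rho v\|_{b,p}^{r-q}\|v\|_{a,q}^q \sup_{|x|\geq 1/2}|x|^{c - c^*}$; the supremum is finite exactly when $c \leq c^*$, which is precisely the semi-open interval of the statement.

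The main obstacle I expect is the pointwise trace estimate $|A| \leq C\|u\|_{\{a,b\},(q,p)}$. I plan to obtain it in two steps: $|A - f_u(1)| \leq \int_0^1 |f_u'(t)|\,dt \leq C\|\partial_\rho u\|_{b,p}$ by the H\"older inequality already used above; and $|f_u(1)|$ controlled via the one-dimensional Sobolev embedding $W^{1,p}(1/2,3/2) \hookrightarrow L^\infty(1/2,3/2)$, after Lemma~\ref{lm10} bounds $\|f_u\|_{L^p(1/2,3/2)}$ by $C\|u\|_{\{a,b\},(q,p)}$ (using radial symmetry so that $\|u\|_{p,\Omega}$ pulls back to $\|f_u\|_{L^p(1/2,3/2)}$ up to a constant) and $\|f_u'\|_{L^p(1/2,3/2)}$ is controlled by $\|\partial_\rho u\|_{b,p}$ since $|x|^b$ is bounded below on that annulus. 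Combining these bounds yields the desired control on $A$, and the lemma follows.
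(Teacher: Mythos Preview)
Your proof is correct, but your treatment of the near-origin piece $\zeta u$ is genuinely different from the paper's. The paper never introduces the radial trace $A=f_u(0^+)$; instead it exploits the observation that on functions supported in $\overline{B}(0,1)$, increasing $b$ to some $\hat b\geq p-N$ only decreases $\|\partial_\rho(\zeta u)\|_{\hat b,p}$, and with this new exponent one is back in the ``same-side'' configuration of Lemma~\ref{lm12}, which is then invoked directly (with $\hat b=p-N$ in part~(i) and $\hat b$ slightly larger in part~(ii)). Your route---showing $f_u'\in L^1(0,1)$ from $b-p<-N$, subtracting $A\zeta$ so that the remainder lies in $\widetilde W^{1,1}_{loc,+}$, applying Theorem~\ref{th9} with $\gamma<1-N$, and controlling $|A|$ via Lemma~\ref{lm10} and a one-dimensional Sobolev trace---is more self-contained and avoids the back-reference to Lemma~\ref{lm12}, at the cost of the extra trace estimate. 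For the far piece $(1-\zeta)u$, your arguments are essentially streamlined versions of the paper's: in~(i) your choice $k=q/r$ makes $kr-q=0$ and bypasses the $L^\infty$ factor entirely (the paper instead reruns the full $k,\xi$ machinery of Lemma~\ref{lm12}, Case~(i-{\scriptsize 1}), via Theorem~\ref{th9}); in~(ii) your split $|v|^r=|v|^{r-q}|v|^q$ is exactly the paper's ``$k=1,\xi=0$'' endpoint argument, and you correctly observe that the resulting factor $\sup_{|x|\geq 1/2}|x|^{c-c^*}$ is finite on the whole range $c\leq c^*$, not just at $c=c^*$.
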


\begin{proof}
Since $a$ and $b-p$ are strictly on opposite sides of $-N,$ we may assume
that $b-p<-N<a$ by the usual Kelvin transform argument.

(i) By (\ref{1}), $c^{1}<-N<c^{0}.$ Let $c\in \left( -N,c^{0}\right) $ be
given. As in the proof of Lemma \ref{lm12}, it suffices to show that $%
||(1-\zeta )u||_{c,r}\leq C||(1-\zeta )u||_{\{a,b\},(q,p)}$ and that $%
||\zeta u||_{c,r}\leq C||\zeta u||_{\{a,b\},(q,p)}$ when $u$ is radially
symmetric.

Since $\limfunc{Supp}(1-\zeta )\subset \Bbb{R}^{N}\backslash B(0,\frac{1}{2}%
),$ it follows that $(1-\zeta )u\in \widetilde{W}_{loc,+}^{1,1}.$ As a
result, the argument of the proof of Case (i-{\scriptsize 1}) of Lemma \ref
{lm12}, based on Theorem \ref{th9}, can be repeated verbatim with now $%
\gamma :=\frac{b}{p}-\frac{N}{p^{\prime }}<1-N.$ This shows that $||(1-\zeta
)u||_{c,r}\leq C||(1-\zeta )u||_{\{a,b\},(q,p)}$ since $c<\max \left\{
c^{0},c^{1}\right\} =c^{0}.$

The inequality $||\zeta u||_{c,r}\leq C||\zeta u||_{\{a,b\},(q,p)}$ cannot
be obtained as in Case (i-{\scriptsize 1}) of Lemma \ref{lm12} because $%
b-p<-N$ but $\zeta u\notin \widetilde{W}_{loc,+}^{1,1},$ so that Theorem \ref
{th9} is not applicable. However, it can be proved with the trick used in
Case (i-{\scriptsize 2}) of that lemma: Since $-N<c<c^{0},$ part (i) of
Lemma \ref{lm12} can be used with $b$ replaced by $p-N>b$ because $a\neq -N$
and $c^{1}$ becomes $-N$ when $b$ is replaced by $p-N$ while $c^{0}$ is
unchanged. Thus, $||\zeta u||_{c,r}\leq C||\zeta u||_{\{a,p-N\},(q,p)}$
while $||\zeta u||_{\{a,p-N\},(q,p)}\leq ||\zeta u||_{\{a,b\},(q,p)}$ since $%
p-N>b$ and $\limfunc{Supp}\zeta \subset \overline{B}(0,1).$

(ii) Observe that $c^{1}<c^{*}<c^{0}$ because $q<r$ and $c^{1}<c^{0}$
(recall $b-p<-N<a$), while $1-\frac{q}{r}<\theta _{-N}$ ensures that $%
-N<c^{*}.$

Let then $c\in (-N,c^{*})$ be given. By using once again the fact that $%
(1-\zeta )u\in \widetilde{W}_{loc,+}^{1,1}$ since $\limfunc{Supp}(1-\zeta
)\subset \Bbb{R}^{N}\backslash B(0,\frac{1}{2})$ and Theorem \ref{th9} with $%
\gamma :=\frac{b}{p}-\frac{N}{p^{\prime }}<1-N,$ the argument of the proof
of part (ii) of Lemma \ref{lm12} (with obvious modifications) yields $%
||(1-\zeta )u||_{c,r}\leq C||(1-\zeta )u||_{\{a,b\},(q,p)}$ because $%
c<c^{*}=\max \left\{ c^{*},c^{1}\right\} .$

If $c=c^{*},$ the same argument works with ``$k=1,\xi =0$'': Let $%
v:=(1-\zeta )u\in \widetilde{W}_{loc,+}^{1,1}$ and write $%
||v||_{c^{*},r}^{r}=\int_{\Bbb{R}^{N}}|x|^{c^{*}}|v|^{r}=\int_{\Bbb{R}
^{N}}|x|^{a+(r-q)\left( \frac{b-p+N}{p}\right) }|v|^{r}\leq \left\| |x|^{%
\frac{b-p+N}{p}}v\right\| _{\infty }^{r-q}||v||_{a,q}^{q}.$ Then, use
Theorem \ref{th9} with $\gamma :=\frac{b}{p}-\frac{N}{p^{\prime }}<1-N$ to
get $\left\| |x|^{\frac{b-p+N}{p}}v\right\| _{\infty }\leq C||\partial
_{\rho }v||_{b,p}.$

The proof of $||\zeta u||_{c,r}\leq C||\zeta u||_{\{a,b\},(q,p)}$ when $c\in
(-N,c^{*}]$ proceeds as in (i) above, with minor modifications. If $\hat{b}%
>b,$ then $\hat{c}^{1}:=\frac{r(\hat{b}-p+N)}{p}-N>c^{1}$ and so $\hat{c}%
^{*}:=\left( 1-\frac{q}{r}\right) \hat{c}^{1}+\frac{q}{r}c^{0}>c^{*}.$ Note
also that $\hat{c}^{1}$ is arbitrarily close to $-N$ if $\hat{b}$ is close
enough to $p-N.$ As a result, $c$ is in the open interval with endpoints $%
\hat{c}^{*}$ and $\hat{c}^{1}$ (even when $c=c^{*}$) provided that $\hat{b}%
>p-N$ is close to $p-N,$ while $a$ and $\hat{b}-p$ are both on the right of $%
-N.$ Thus, part (ii) of Lemma \ref{lm12} is applicable with $b$ replaced by $%
\hat{b}$ (unlike in (i), $\hat{b}=p-N$ cannot be chosen if $p>1$ due to the
requirement $\hat{b}-p\neq -N$ to use part (ii) of Lemma \ref{lm12}).
\end{proof}

We shall now prove optimal variants of Lemmas \ref{lm12} and \ref{lm13}. To
do this, we need a complement of part (i) of Lemma \ref{lm7} in the radially
symmetric case.

\begin{lemma}
\label{lm14}Let $a,b\in \Bbb{R}$ and $1\leq p<\infty ,0<q<\infty $ be given.
If $1\leq \xi \leq \frac{q}{p^{\prime }}+1$ and $u\in \widetilde{W}%
_{\{a,b\}}^{1,(q,p)}$ is radially symmetric, then $|u|^{\xi }\in \widetilde{W%
}_{\{a,b_{\xi }\}}^{1,(q_{\xi },p_{\xi })},$ where 
\begin{equation}
p_{\xi }:=\frac{pq}{p(\xi -1)+q}\geq 1,\text{ }q_{\xi }:=\frac{q}{\xi }>0%
\text{ and }b_{\xi }:=\left( \frac{a(\xi -1)}{q}+\frac{b}{p}\right) p_{\xi }.
\label{24}
\end{equation}
Furthermore, $|u|^{\xi }$ (is radially symmetric and) 
\begin{equation}
||\,|u|^{\xi }||_{a,q_{\xi }}=||u||_{a,q}^{\xi },\qquad ||\partial _{\rho
}(|u|^{\xi })||_{b_{\xi },p_{\xi }}\leq \xi ||u||_{a,q}^{\xi -1}||\partial
_{\rho }u||_{b,p}.  \label{25}
\end{equation}
\end{lemma}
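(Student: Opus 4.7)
The plan is to reduce the problem to one dimension via Lemma \ref{lm4}, compute $\partial_\rho(u^\xi)$ by the one-dimensional chain rule, and derive (\ref{25}) from a single well-chosen application of H\"older's inequality.

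By Lemma \ref{lm7}(i), replacing $u$ by $|u|$ preserves both radial symmetry and the two (quasi-)norms appearing in (\ref{25}), so I assume $u\ge 0$ throughout. Writing $u(x)=f(|x|)$ with $f:=f_u\ge 0$, Lemmas \ref{lm4} and \ref{lm5} yield $f\in W^{1,p}_{\mathrm{loc}}(0,\infty)$ and $\partial_\rho u(x)=f'(|x|)$. A direct check shows that the hypothesis $1\le \xi\le q/p'+1$ is algebraically equivalent to $p(\xi-1)+q\le pq$, i.e., to $p_\xi\ge 1$, so the target space $\widetilde W_{\{a,b_\xi\}}^{1,(q_\xi,p_\xi)}$ is well defined; one also has automatically $p_\xi\le p$ and $q_\xi\le q$.

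The first equality in (\ref{25}) is immediate from $\xi q_\xi=q$. For the derivative, the one-dimensional chain rule applied to the $C^1$ function $t\mapsto t^\xi$ on $[0,\infty)$ (valid since $\xi\ge 1$) shows that $f^\xi$ is locally absolutely continuous on $(0,\infty)$ with $(f^\xi)'=\xi f^{\xi-1}f'$, which is locally in $L^p$ because $f$ is continuous. The converse part of Lemma \ref{lm4} then gives $u^\xi\in \widetilde W^{1,p_\xi}_{\mathrm{loc}}$ with $\partial_\rho(u^\xi)=\xi u^{\xi-1}\partial_\rho u$ a.e.

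For the norm bound, the definition of $b_\xi$ factorizes the integrand as $\bigl(|x|^{a(\xi-1)p_\xi/q}u^{(\xi-1)p_\xi}\bigr)\cdot\bigl(|x|^{bp_\xi/p}|\partial_\rho u|^{p_\xi}\bigr)$, and I apply H\"older's inequality with conjugate exponents $s:=q/((\xi-1)p_\xi)$ and $s':=p/p_\xi$ (the case $\xi=1$ being trivial). The definition of $p_\xi$ yields at once $s^{-1}+(s')^{-1}=p_\xi((\xi-1)/q+1/p)=1$, together with $(\xi-1)p_\xi s=q$ and $p_\xi s'=p$, so the two resulting integrals collapse exactly into $||u||_{a,q}^{(\xi-1)p_\xi}$ and $||\partial_\rho u||_{b,p}^{p_\xi}$, and extracting the $p_\xi$-th root produces the desired estimate. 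The only place where the upper bound $\xi\le q/p'+1$ is genuinely used is in guaranteeing $s'=p/p_\xi\ge 1$ so that H\"older is legitimate (and, equivalently, that the target space is a Banach space rather than merely quasi-normed in the derivative variable); the reduction to one variable via $f_u$ is what lets me sidestep any subtle chain-rule issue for non-integer powers of Sobolev functions in higher dimensions.
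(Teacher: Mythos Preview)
Your proof is correct and follows essentially the same route as the paper's: reduce to the one-dimensional profile $f_u$ via Lemma~\ref{lm4}, apply the chain rule to obtain $\partial_\rho(u^\xi)=\xi u^{\xi-1}\partial_\rho u$, and then use H\"older's inequality (which the paper packages as the multiplication $L^\mu\times L^\nu\to L^{\mu\nu/(\mu+\nu)}$) with the same pair of exponents. The only small point to tidy up is that invoking Lemma~\ref{lm4} with exponent $p$ presupposes $u\in\widetilde W^{1,p}_{loc}$, i.e.\ $u\in L^p_{loc}(\Bbb R^N_*)$, which is not immediate from $u\in\widetilde W_{\{a,b\}}^{1,(q,p)}$ when $q<p$; the paper handles this by working in $W^{1,1}_{loc}$ via Lemma~\ref{lm6}(iv), and you can either do the same or observe that $f_u\in W^{1,1}_{loc}(0,\infty)$ is continuous, whence $u=f_u(|\cdot|)\in L^p_{loc}$.
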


\begin{proof}
If $\xi =1,$ then $q_{\xi }=q,p_{\xi }=p$ and $b_{\xi }=b,$ the case covered
by Lemma \ref{lm7}, which also shows that it is not restrictive to assume $%
u\geq 0.$ From now on, $\xi >1.$ The assumption $\xi \leq \frac{q}{p^{\prime
}}+1$ ensures that $p_{\xi }\geq 1$ in (\ref{24}).

That $u^{\xi }$ is radially symmetric, $u^{\xi }\in L^{q_{\xi }}(\Bbb{R}
^{N};|x|^{a}dx)$ and $||\,|u|^{\xi }||_{a,q_{\xi }}=||u||_{a,q}^{\xi }$ is
obvious. It remains to prove that $u^{\xi }\in L_{loc}^{1}(\Bbb{R}_{*}^{N}),$
that $\partial _{\rho }(u^{\xi })\in L^{p_{\xi }}(\Bbb{R}^{N};|x|^{b_{\xi
}}dx)$ and that the second inequality holds in (\ref{25}).

By part (iv) of Lemma \ref{lm6}, $u\in \widetilde{W}_{loc,\pm }^{1,1}$ $%
\subset \widetilde{W}_{loc}^{1,1}$ (depending upon whether $a\geq -N$ or $%
a\leq -N$). Thus, from Lemma \ref{lm4}, $u(x)=f_{u}(|x|)$ with $f_{u}\in
W_{loc}^{1,1}(0,\infty ),f_{u}\geq 0$ and $\partial _{\rho
}u(x)=f_{u}^{\prime }(|x|).$ Since $\xi >1,$ it is clear that $f_{u}^{\xi
}\in W_{loc}^{1,1}(0,\infty )$ and that $\left( f_{u}^{\xi }\right) ^{\prime
}=\xi f_{u}^{\xi -1}f_{u}^{\prime }.$ Hence, once again by Lemma \ref{lm4}, $%
u^{\xi }(x)=f_{u}^{\xi }(|x|)$ is in $\widetilde{W}_{loc}^{1,1}\subset
L_{loc}^{1}(\Bbb{R}_{*}^{N})$ and $\partial _{\rho }(u^{\xi })(x)=\xi
f_{u}^{\xi -1}(|x|)f_{u}^{\prime }(|x|),$ i.e., $\partial _{\rho }(u^{\xi
})=\xi u^{\xi -1}\partial _{\rho }u.$

In general, if $\mu ,\nu >0,$ the multiplication maps $L^{\mu }\times L^{\nu
}$ into $L^{\frac{\mu \nu }{\mu +\nu }}$ and $||vw||_{\frac{\mu \nu }{\mu
+\nu }}\leq ||v||_{\mu }||w||_{\nu }.$ This does not require $\mu \geq 1$ or 
$\nu \geq 1$ (just use $|v|^{\frac{\mu \nu }{\mu +\nu }}\in L^{1+\frac{\mu }{%
\nu }}$ and $|w|^{\frac{\mu \nu }{\mu +\nu }}\in L^{1+\frac{\nu }{\mu }}$
and H\"{o}lder's inequality). Now, $|x|^{\frac{a(\xi -1)}{q}}|u|^{\xi -1}\in
L^{\frac{q}{\xi -1}}(\Bbb{R}^{N})$ since $|x|^{a}|u|^{q}\in L^{1}(\Bbb{R}%
^{N})$ and $\xi >1,$ and $|x|^{\frac{b}{p}}\partial _{\rho }u\in L^{p}(\Bbb{R%
}^{N}).$ Therefore, $|x|^{\frac{b_{\xi }}{p_{\xi }}}u^{\xi -1}\partial
_{\rho }u\in L^{p_{\xi }}(\Bbb{R}^{N})$ with $p_{\xi }$ and $b_{\xi }$ given
by (\ref{24}) and 
\begin{equation*}
\left\| \,|x|^{\frac{b_{\xi }}{p_{\xi }}}u^{\xi -1}\partial _{\rho
}u\right\| _{p_{\xi }}\leq \left\| \,|x|^{\frac{a(\xi -1)}{q}}|u|^{\xi
-1}\right\| _{\frac{q}{\xi -1}}||\,|x|^{\frac{b}{p}}\partial _{\rho
}u||_{p}=||u||_{a,q}^{\xi -1}||\partial _{\rho }u||_{b,p}.
\end{equation*}
From the above, this implies $\partial _{\rho }(u^{\xi })\in L^{p_{\xi }}(%
\Bbb{R}^{N};|x|^{b_{\xi }}dx)$ with $||\partial _{\rho }(u^{\xi })||_{b_{\xi
},p_{\xi }}\leq \xi ||u||_{a,q}^{\xi -1}||\partial _{\rho }u||_{b,p}.$
\end{proof}

\begin{remark}
If $1\leq \xi \leq \min \left\{ q,\frac{q}{p^{\prime }}+1\right\} ,$ Lemma 
\ref{lm14} is true without the radial symmetry assumption. Indeed, if $u\in 
\widetilde{W}_{\{a,b\}}^{1,(q,p)},$ then $|u|^{\xi }\in L^{q_{\xi }}(\Bbb{R}%
^{N};|x|^{a}dx)\subset L_{loc}^{1}(\Bbb{R}_{*}^{N})$ and $u\in \widetilde{W}%
_{\{a,b\}}^{1,(q,p)}\subset \widetilde{W}^{1,1}$ implies that $|u|^{\xi }$
is locally absolutely continuous on almost every ray through the origin (see
Section \ref{spaces}) with $\partial _{\rho }(|u|^{\xi })=\xi |u|^{\xi -1}(%
\limfunc{sgn}u)\partial _{\rho }u\in L^{p_{\xi }}(\Bbb{R}^{N};|x|^{b_{\xi
}}dx)\subset L_{loc}^{1}(\Bbb{R}_{*}^{N}).$ This will be used elsewhere.
\end{remark}

\begin{lemma}
\label{lm15}Let $a,b,c\in \Bbb{R}$ and $1\leq p<\infty ,0<q,r<\infty $ be
given and let $\breve{\theta}:=\left( 1-\frac{q}{r}\right) \left( \frac{q}{%
p^{\prime }}+1\right) ^{-1}<1$ ($\leq 0$ if $r\leq q$). The subspace of $%
\widetilde{W}_{\{a,b\}}^{1,(q,p)}$ of radially symmetric functions is
continuously embedded into $L^{r}(\Bbb{R}^{N};|x|^{c}dx)$ in the following
two cases.\newline
(i) $a$ and $b-p$ are on the same side of $-N$ (including $-N$), $\frac{a+N}{%
q}\neq \frac{b-p+N}{p},$ $c$ is in the open interval\thinspace with
endpoints $c^{0}$\thinspace and $c^{1}$\thinspace and$\,\theta _{c}\geq 
\breve{\theta}$ (vacuous if $r\leq q$).\newline
(ii) $a$ and $b-p$ are strictly on opposite sides of $-N,c$ is in the open
interval with endpoints $c^{0}$ and $-N$ and $\theta _{c}\geq \breve{\theta}$
(empty set if $\breve{\theta}\geq \theta _{-N}$).
\end{lemma}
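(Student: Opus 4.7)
The plan is to \emph{lift} a radially symmetric $u\in \widetilde{W}_{\{a,b\}}^{1,(q,p)}$ to $|u|^{\xi }$ via Lemma~\ref{lm14} and then invoke the already proved Lemma~\ref{lm12} (for part (i)) or Lemma~\ref{lm13} (for part (ii)) with the new data $(a,b_{\xi },q_{\xi }=q/\xi ,p_{\xi },r/\xi )$, for a suitably chosen $\xi \in [1,q/p^{\prime }+1].$ If $r\leq q$, then $\breve{\theta}\leq 0$, the constraint $\theta _{c}\geq \breve{\theta}$ is vacuous, and each part follows directly from Lemma~\ref{lm12}(i) or Lemma~\ref{lm13}(i) (the case $\xi =1$). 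Henceforth assume $r>q$, hence $\breve{\theta}\in (0,1).$

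A direct computation using $c^{0}+N=r(a+N)/q$ and $c^{1}+N=r(b-p+N)/p$ shows that for any $\xi \in [1,q/p^{\prime }+1]$ the lifted endpoints satisfy $c^{0,\xi }=c^{0}$ and $c^{1,\xi }=((\xi -1)c^{0}+c^{1})/\xi $, so the original $\theta $-value of $c^{1,\xi }$ is $1/\xi $. Consequently the lifted Lemmas~\ref{lm12}(ii) and \ref{lm13}(ii) produce the admissible range $\theta _{c}\in [(1-q/r)/\xi ,\,1/\xi )$ in case (i) and $\theta _{c}\in [(1-q/r)/\xi ,\,\theta _{-N})$ in case (ii). The natural choice $\xi :=(1-q/r)/\theta _{c}$ places $\theta _{c}$ precisely at the included left endpoint, and the constraint $\xi \in [1,q/p^{\prime }+1]$ becomes exactly $\theta _{c}\in [\breve{\theta},\,1-q/r]$; the complementary range $\theta _{c}\in [1-q/r,\,1)$ is handled by $\xi =1$, i.e., by the unlifted Lemmas~\ref{lm12}(ii) and \ref{lm13}(ii).

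Verifying the lifted lemma's hypotheses involves a little bookkeeping. The identities $(a+N)/q_{\xi }=\xi (a+N)/q$ and $(b_{\xi }-p_{\xi }+N)/p_{\xi }=(\xi -1)(a+N)/q+(b-p+N)/p$ show that the non-equality $(a+N)/q_{\xi }\neq (b_{\xi }-p_{\xi }+N)/p_{\xi }$ reduces to the original one, and that in case (i), $(b_{\xi }-p_{\xi }+N)$ is a nonnegative combination of $(a+N)$ and $(b-p+N)$ (both of the same sign by hypothesis), so $a$ and $b_{\xi }-p_{\xi }$ remain on the same side of $-N$ and $b_{\xi }-p_{\xi }\neq -N$ whenever $\xi >1$. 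In case (ii) the same identity gives $b_{\xi }-p_{\xi }+N=0$ exactly at $\xi =1/\theta _{-N}$, so for $\xi <1/\theta _{-N}$ one remains in ``opposite sides'' and applies Lemma~\ref{lm13}(ii), whose hypothesis $1-q/r<\theta _{-N}^{(\xi )}=\xi \theta _{-N}$ follows automatically from $\theta _{c}<\theta _{-N}$; for $\xi >1/\theta _{-N}$ one slides into ``same sides'' and Lemma~\ref{lm12}(ii) takes over.

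Finally, once the lifted lemma supplies $||\,|u|^{\xi }||_{c,r/\xi }\leq C(||\,|u|^{\xi }||_{a,q_{\xi }}+||\partial _{\rho }|u|^{\xi }||_{b_{\xi },p_{\xi }})$, Lemma~\ref{lm14} converts this to $||u||_{c,r}^{\xi }\leq C(||u||_{a,q}^{\xi }+\xi ||u||_{a,q}^{\xi -1}||\partial _{\rho }u||_{b,p})$, and the elementary bound $x^{\xi }+x^{\xi -1}y\leq 2(x+y)^{\xi }$ (valid for $x,y\geq 0$ and $\xi \geq 1$) yields $||u||_{c,r}\leq C(||u||_{a,q}+||\partial _{\rho }u||_{b,p})$, the desired embedding. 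I expect the main obstacle to be the careful perturbation of $\xi $ at the thresholds $\xi =1$ (where $b_{\xi }-p_{\xi }=b-p$ could equal $-N$ with $p>1$) and $\xi =1/\theta _{-N}$ in case (ii): at each, a slight shift of $\xi $ keeps $c$ strictly inside the (half-open) lifted admissible interval while restoring $b_{\xi }-p_{\xi }\neq -N$ as required by Lemma~\ref{lm12}(ii).
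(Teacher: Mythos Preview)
Your approach is exactly the paper's: lift $u$ to $|u|^{\xi}$ via Lemma~\ref{lm14} and invoke Lemma~\ref{lm12}(ii) or Lemma~\ref{lm13}(ii) with the shifted parameters, then take a union (equivalently, pick the right $\xi$ for each $\theta_c$). Your computations $c^{0,\xi}=c^{0}$, $\theta(c^{1,\xi})=1/\xi$, $\theta(c^{*,\xi})=(1-q/r)/\xi$, $\theta_{-N}^{(\xi)}=\xi\theta_{-N}$, and the transition point $\xi_0=1/\theta_{-N}$ are all correct, and so is the verification that $b_\xi-p_\xi\neq -N$ for $\xi>1$ in case~(i).

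There is one boundary situation in case~(ii) where your ``slight shift of $\xi$'' does \emph{not} work: when $\xi_0=1/\theta_{-N}$ happens to equal $\breve{\xi}=q/p'+1$ and you need the endpoint $\theta_c=\breve{\theta}$. Shifting up is forbidden ($\xi>\breve{\xi}$ is out of range for Lemma~\ref{lm14}), and shifting down to $\xi<\xi_0$ puts you in the opposite-sides regime, where the left endpoint of the admissible interval is $(1-q/r)/\xi>\breve{\theta}$, so $\theta_c=\breve{\theta}$ is lost. The fix is not a perturbation but an observation you overlooked: $p_{\breve{\xi}}=1$ (this is exactly what the upper bound $\xi\leq q/p'+1$ encodes), and Lemma~\ref{lm12}(ii) drops the requirement $b_\xi-p_\xi\neq -N$ when $p_\xi=1$. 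So at $\xi=\breve{\xi}=\xi_0$ one applies Lemma~\ref{lm12}(ii) directly, with $a$ and $b_{\breve{\xi}}-p_{\breve{\xi}}=-N$ on the same side ``including $-N$''. This is the paper's Case~(ii-{\scriptsize 3}); with this one addition your argument is complete.
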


\begin{proof}
If $r\leq q$ (so that $\breve{\theta}\leq 0$) or if $r>q$ and $p=1$ (so that 
$\breve{\theta}=$ $1-\frac{q}{r}$), (i) follows from Lemma \ref{lm12} (where 
$b-p\neq -N$ is not required in part (ii) when $p=1$) and (ii) follows from
Lemma \ref{lm13}. From now on, $r>q$ (so that $\breve{\theta}\in (0,1)$) and 
$p>1.$ For convenience, we set $\breve{\xi}:=\frac{q}{p^{\prime }}+1>1.$ In
particular, the interval $(1,\breve{\xi}]$ is not empty, which is implicitly
used below.

(i) Let $u\in \widetilde{W}_{\{a,b\}}^{1,(q,p)}$ be radially symmetric. If $%
1\leq \xi \leq \breve{\xi},$ then, by Lemma \ref{lm14}, $|u|^{\xi }\in 
\widetilde{W}_{\{a,b_{\xi }\}}^{1,(q_{\xi },p_{\xi })}$ with $q_{\xi
}>0,p_{\xi }\geq 1$ and $b_{\xi }$ given by (\ref{24}). A routine
verification shows that $a$ and $b_{\xi }-p_{\xi }$ are on the same side of $%
-N$ (since the same thing is true of $a$ and $b-p$) and that $\frac{a+N}{%
q_{\xi }}\neq \frac{b_{\xi }-p_{\xi }+N}{p_{\xi }}$ (since $\frac{a+N}{q}%
\neq \frac{b-p+N}{p}$).

Furthermore, $b_{\xi }-p_{\xi }\neq -N$ if $\xi >1$ (which need not be true
if $\xi =1$ since $b-p\neq -N$ is \emph{not} assumed). Indeed, $b_{\xi
}-p_{\xi }=-N$ amounts to $\frac{a+N}{q}(\xi -1)+\frac{b-p+N}{p}=0.$ Since $%
a $ and $b-p$ are on the same side of $-N,$ this can only happen if $%
a+N=b-p+N=0$ when $\xi >1,$ which contradicts the assumption $\frac{a+N}{q}
\neq \frac{b-p+N}{p}.$

Accordingly, from part (ii) of Lemma \ref{lm12} with $b,p,q$ and $r$
replaced by $b_{\xi },p_{\xi },q_{\xi }$ and $s,$ respectively, $%
W_{\{a,b_{\xi }\}}^{1,\left( q_{\xi },p_{\xi }\right) }(\Bbb{R}
_{*}^{N})\hookrightarrow L^{s}(\Bbb{R}^{N};|x|^{c}dx)$ whenever $1<\xi \leq 
\breve{\xi},0<q_{\xi }<s$ and $c$ is in the semi-open interval with
endpoints $a+\frac{(s-q_{\xi })(b_{\xi }-p_{\xi }+N)}{p_{\xi }}$ (included;
this corresponds to $c^{*}$ with the parameters $b_{\xi },p_{\xi },q_{\xi
},s $) and $\frac{s(b_{\xi }-p_{\xi }+N)}{p_{\xi }}-N$ (not included; this
corresponds to $c^{1}$ with the parameters $b_{\xi },p_{\xi },q_{\xi },s$).
Since $r>q,$ and $q_{\xi }=\frac{q}{\xi },$ the condition $0<q_{\xi }<s$
holds when $s=\frac{r}{\xi }.$ If so, the embedding inequality $||\,|u|^{\xi
}||_{c,\frac{r}{\xi }}\leq C_{\xi }(||\,|u|^{\xi }||_{a,q_{\xi }}+||\partial
_{\rho }(|u|^{\xi })||_{b_{\xi },p_{\xi }})$ reads (use (\ref{25})) 
\begin{equation*}
||u||_{c,r}^{\xi }\leq C_{\xi }(||u||_{a,q}^{\xi }+||u||_{a,q}^{\xi
-1}||\partial _{\rho }u||_{b,p})\leq C_{\xi }(||u||_{a,q}+||\partial _{\rho
}u||_{b,p})^{\xi },
\end{equation*}
so that $||u||_{c,r}\leq C_{\xi }^{\xi ^{-1}}||u||_{\{a,b\},(q,p)}.$ Above, $%
c$ is in the semi-open interval $J_{\xi }$ with (distinct) endpoints $%
e_{1}(\xi ):=a+\frac{(r-q)(b_{\xi }-p_{\xi }+N)}{\xi p_{\xi }}$ (included)
and $e_{2}(\xi ):=\frac{r(b_{\xi }-p_{\xi }+N)}{\xi p_{\xi }}-N$ (not
included) and $1<\xi \leq \breve{\xi}.$ Thus, when $c\in J:=\cup _{\xi \in
(1,\breve{\xi}]}J_{\xi },$ 
\begin{equation}
||u||_{c,r}\leq C(||u||_{a,q}+||\partial _{\rho }u||_{b,p}),  \label{26}
\end{equation}
for some constant $C>0$ independent of the radially symmetric $u\in
W_{\{a,b\}}^{1,(q,p)}(\Bbb{R}_{*}^{N})$ (specifically, $C=C_{\xi }^{\xi
^{-1}}$ for any $\xi $ such that $c\in J_{\xi }$).

Since the distinct endpoints of $J_{\xi }$ depend continuously upon $\xi ,$
the lower (upper) endpoint $e_{-}(\xi )$ ($e_{+}(\xi )$) is either $%
e_{1}(\xi )$ for every $\xi $ or $e_{2}(\xi )$ for every $\xi .$ Hence, $%
e_{\pm }$ are continuous and never equal functions of $\xi .$ With that
remark, it is an easy exercise to show that $J$ contains the open interval
with endpoints $\inf e_{-}$ and $\sup e_{+}.$

If $\frac{a+N}{q}>\frac{b-p+N}{p},$ then $e_{1}>e_{2}$ and both $e_{1}$ and $%
e_{2}$ are increasing functions of $\xi ,$ so that $J$ contains $%
(e_{2}(1),e_{1}(\breve{\xi})).$ In addition, since it contains $e_{1}(\breve{
\xi})\in J_{\breve{\xi}},$ it contains -and, in fact, coincides with- $%
(e_{2}(1),e_{1}(\breve{\xi})].$

If $\frac{a+N}{q}<\frac{b-p+N}{p},$ then $e_{2}>e_{1}$ and both $e_{1}$ and $%
e_{2}$ are decreasing functions of $\xi ,$ so that $J$ contains the open
interval $(e_{1}(\breve{\xi}),e_{2}(1)).$ Once again, it also contains $%
e_{1}(\breve{\xi}).$ Therefore, in all cases, $J$ is the semi-open interval
with endpoints $e_{1}(\breve{\xi})=\breve{\theta}c^{1}+(1-\breve{\theta}%
)c^{0}$ (included) and $e_{2}(1)=c^{1}$ (not included). For every $c$ in
that interval, (\ref{26}) holds for some constant $C$ independent of the
radially symmetric $u\in \widetilde{W}_{\{a,b\}}^{1,(q,p)}.$ Clearly, $J$ is
equally characterized as the set of those $c$ in the open interval with
endpoints $c^{0}$ and $c^{1}$ such that $\theta _{c}\geq \breve{\theta}.$

(ii) First, since $a$ and $b-p$ are on opposite sides of $-N,$ it is obvious
that $\frac{a+N}{q}\neq \frac{b-p+N}{p}.$ The proof will proceed as in part
(i), but extra technicalities arise from the fact that the points $a$ and $%
b_{\xi }-p_{\xi }$ (see (\ref{24})) need not remain on opposite sides of $-N$
for all $\xi \in [1,\breve{\xi}].$

Nonetheless, since $b_{\xi }-p_{\xi }$ is a strictly monotone function of $%
\xi $ equal to $b-p$ when $\xi =1$ and since $a$ and $b-p$ are strictly on
opposite sides of $-N,$ there are only two options: Either $a$ and $b_{\xi
}-p_{\xi }$ are strictly on opposite sides of $-N$ when $\xi =\breve{\xi}$
-which amounts to $a$ and $\frac{b}{p}+\frac{a}{p^{\prime }}-1$ being
strictly on opposite sides of $-N$- and then the same thing is true for
every $\xi \in [1,\breve{\xi}],$ or $b_{\xi _{0}}-p_{\xi _{0}}=-N$ for some
unique $\xi _{0}\in (1,\breve{\xi}],$ and then $a$ and $b_{\xi }-p_{\xi }$
are on the same side of $-N$ for every $\xi \in [\xi _{0},\breve{\xi}].$

\textit{Case (ii-{\scriptsize 1}):} $a$ and $\frac{b}{p}+\frac{a}{p^{\prime }%
}-1$ are strictly on opposite sides of $-N.$

Replace $q,r,b,p$ by $q_{\breve{\xi}}=\frac{q}{\breve{\xi}},r_{\breve{\xi}}=%
\frac{r}{\breve{\xi}},b_{\breve{\xi}}=\frac{a}{p^{\prime }}+\frac{b}{p},p_{%
\breve{\xi}}=1,$ respectively, in part (ii) of Lemma \ref{lm13} and use that
result with $u$ replaced by $|u|^{\breve{\xi}}.$ This is justified by Lemma 
\ref{lm14}. However, it is crucial to notice that, due to the change of
parameters, the condition ``$1-\frac{q}{r}<\theta _{-N}$'' in Lemma \ref
{lm13} does not involve $\theta _{-N}$ but, instead, the number $\breve{%
\theta}_{-N}$ given by the same formula (\ref{2}) when $c^{0}$ and $c^{1}$
are replaced by $\breve{c}^{0}$ and $\breve{c}^{1}$ defined by (\ref{1})
with the new parameters $q_{\breve{\xi}},r_{\breve{\xi}},b_{\breve{\xi}},p_{%
\breve{\xi}}.$ Thus, $\breve{c}^{0}=c^{0}$ but $\breve{c}^{1}=$ $r\breve{\xi}%
^{-1}\left( \frac{a+N}{p^{\prime }}+\frac{b-p+N}{p}\right) -N,$ so that $%
\breve{c}^{1}-\breve{c}^{0}=r\breve{\xi}^{-1}\left( \frac{b-p+N}{p}-\frac{a+N%
}{q}\right) .$ With this remark, it is readily checked that $\breve{\theta}
_{-N}=\breve{\xi}\theta _{-N},$ so that the condition $1-\frac{q_{\breve{\xi}
}}{r_{\breve{\xi}}}<\breve{\theta}_{-N}$ is $\breve{\theta}:=\left( 1-\frac{q%
}{r}\right) \left( \frac{q}{p^{\prime }}+1\right) ^{-1}<\theta _{-N}.$

In summary, the continuity of the embedding is ensured if $\breve{\theta}
<\theta _{-N}$ and $c$ is in the semi-open interval with endpoints $\breve{c}
:=\left( 1-\frac{q}{r}\right) \breve{c}^{1}+\frac{q}{r}\breve{c}^{0}=\breve{
\theta}c^{1}+(1-\breve{\theta})c^{0}$ (included) and $-N$ (not included),
which -since $r>q$- coincides with the set of those $c$ in the open interval
with endpoints $c^{0}$ and $-N$ such that $\theta _{c}\geq \breve{\theta}.$

\textit{Case (ii-{\scriptsize 2}):} $b_{\xi _{0}}-p_{\xi _{0}}=-N$ for some $%
\xi _{0}\in (1,\breve{\xi}).$

Since $a$ and $b_{\xi }-p_{\xi }$ are on the same side of $-N$ for $\xi \in
[\xi _{0},\breve{\xi}]$ and since $b_{\xi }-p_{\xi }\neq -N$ if $\xi \in
(\xi _{0},\breve{\xi}],$ part (ii) of Lemma \ref{lm12} with $u$ replaced by $%
|u|^{\xi }$ and $q,r,b,p$ replaced by $\frac{q}{\xi },\frac{r}{\xi },b_{\xi
},p_{\xi },$ respectively, yields that the subspace of $W_{\{a,b%
\}}^{1,(q,p)}(\Bbb{R}_{*}^{N})$ of radially symmetric functions is
continuously embedded into $L^{r}(\Bbb{R}^{N};|x|^{c}dx)$ for every $c\in
J:=\cup _{\xi \in (\xi _{0},\breve{\xi}]}J_{\xi },$ where $J_{\xi }$ is the
semi-open interval with endpoints $e_{1}(\xi ):=a+\frac{(r-q)(b_{\xi
}-p_{\xi }+N)}{\xi p_{\xi }}$ (included) and $e_{2}(\xi ):=\frac{r(b_{\xi
}-p_{\xi }+N)}{\xi p_{\xi }}-N$ (not included). Both endpoints are distinct
(because $\frac{a+N}{q}\neq \frac{b-p+N}{p}$) and on the same side of $-N$
when $\xi >\xi _{0}.$ By arguing as in the proof of (i) above, $J$ is found
to be the semi-open interval with endpoints $e_{1}(\breve{\xi})=\breve{c}=%
\breve{\theta}c^{1}+(1-\breve{\theta})c^{0}$ (included) and $e_{2}(\xi
_{0})=-N$ (not included), exactly as in (ii-{\scriptsize 1}). Therefore, the
final argument is also the same.

\textit{Case (ii-{\scriptsize 3}):} $b_{\xi _{0}}-p_{\xi _{0}}=-N$ with $\xi
_{0}=\breve{\xi}.$

Since $a$ and $b_{\breve{\xi}}-p_{\breve{\xi}}$ are on the same side of $-N$ 
\emph{\ and} since $p_{\breve{\xi}}=1,$ it suffices to use part (ii) of
Lemma \ref{lm12} with $u$ replaced by $|u|^{\breve{\xi}}$ and $q,r,b,p$
replaced by $\frac{q}{\breve{\xi}},\frac{r}{\breve{\xi}},b_{\breve{\xi}}=%
\frac{b}{p}+\frac{a}{p^{\prime }},p_{\breve{\xi}}=1,$ respectively.
\end{proof}

It is informative that even if $q,r\geq 1$ in Lemma \ref{lm15}, the proof
involves part (ii) of Lemmas \ref{lm12} and \ref{lm13} when $q,r>0$ ($%
q,r\geq 1$ is not enough). The next theorem gives necessary and sufficient
conditions for the continuous embedding of the subspace of radially
symmetric functions.

\begin{theorem}
\label{th16}Let $a,b,c\in \Bbb{R},1\leq p<\infty $ and $0<q,r<\infty $ and
set $\breve{\theta}:=\left( 1-\frac{q}{r}\right) \left( \frac{q}{p^{\prime }}%
+1\right) ^{-1}.$ The subspace of $\widetilde{W}_{\{a,b\}}^{1,(q,p)}$ of
radially symmetric functions is continuously embedded into $L^{r}(\Bbb{R}%
^{N};|x|^{c}dx)$ (and hence into $\widetilde{W}_{\{c,b\}}^{1,(r,p)}$) if and
only if one of the following conditions holds:\newline
(i) $a$ and $b-p$ are on the same side of $-N$ (including $-N$), $\frac{a+N}{%
q}\neq \frac{b-p+N}{p},$ $c$ is in the open interval with endpoints $c^{0}$
and $c^{1}$ and $\theta _{c}\geq \breve{\theta}$ (vacuous if $q\geq r$).%
\newline
(ii) $a$ and $b-p$ are strictly on opposite sides of $-N,c$ is in the open
interval with endpoints $c^{0}$ and $-N$ and $\theta _{c}\geq \breve{\theta}$
(empty set if $\breve{\theta}\geq \theta _{-N}$).\newline
(iii) $r\geq p,a\leq -N$ and $b-p<-N$ or $a\geq -N$ and $b-p>-N,c=c^{1}.$
Furthermore, there is a constant $C>0$ such that 
\begin{equation}
||u||_{c,r}\leq C||\partial _{\rho }u||_{b,p},  \label{27}
\end{equation}
for every radially symmetric function $u\in \widetilde{W}_{\{a,b%
\}}^{1,(q,p)}.$ \newline
(iv) $r=q$ and $c=c^{0}$ ($=a$), or $p\neq q,\min \{p,q\}\leq r\leq \max
\{p,q\},\frac{a+N}{q}=\frac{b-p+N}{p}\neq 0$ and $c=c^{0}$ ($=c^{1}$).
Furthermore, there is a constant $C>0$ such that 
\begin{equation}
||u||_{c,r}\leq C||\partial _{\rho }u||_{b,p}^{\theta }||u||_{a,q}^{1-\theta
},  \label{28}
\end{equation}
for every radially symmetric function $u\in \widetilde{W}_{\{a,b%
\}}^{1,(q,p)},$ where $\theta =0$ if $r=q$ and $c=a$ and $\theta =\frac{%
p(r-q)}{r(p-q)}$ otherwise.\newline
(v) $a=-N,b=p-N,r>q$ and $c=c^{0}$ ($=c^{1}=-N$). Furthermore, there is a
constant $C>0$ such that 
\begin{equation}
||u||_{-N,r}\leq C||\partial _{\rho }u||_{p-N,p}^{\breve{\theta}%
}||u||_{-N,q}^{1-\breve{\theta}},  \label{29}
\end{equation}
for every radially symmetric function $u\in \widetilde{W}_{\{a,b%
\}}^{1,(q,p)}.$
\end{theorem}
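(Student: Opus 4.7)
The overall strategy is to reduce, via Lemma~\ref{lm4}, to one-dimensional inequalities for the spherical mean $f_{u}$, then to apply Lemma~\ref{lm15} (which already handles the open interval of admissible $c$ in cases (i)--(ii)) and to supply separate one-dimensional weighted Hardy and Gagliardo--Nirenberg estimates for the endpoint cases (iii)--(v).

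For necessity, the test functions produced in the proof of Theorem~\ref{th1}, namely $|x|^{-(c+N)/r}\zeta(x)$, $|x|^{-(c+N)/r}(1-\zeta(x))$ and the radial functions $u(x)=f(|x|)$ used in parts (iii)--(vi), are all radially symmetric, so those obstructions to embedding survive verbatim for the radial subspace. The obstruction $r\leq\max\{p^{*},q\}$ of Theorem~\ref{th3} was obtained by translation and is not directly available, but for radial $u$ it can be replaced by a rescaling $u(x)\mapsto u(\lambda x)$ combined with the weighted $L^{\infty}$ control of Theorem~\ref{th9} and the power trick $u\mapsto|u|^{\xi}$ of Lemma~\ref{lm14}; pushing $\xi$ up to $q/p'+1$ produces exactly the threshold $\theta_{c}\geq\breve{\theta}$ in (i) and (ii), and the remaining endpoint obstructions come from the scaling arguments in parts (iii)--(vi) of Theorem~\ref{th1} applied to radial test functions.

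For sufficiency in the open-interval cases (i) and (ii), one invokes Lemma~\ref{lm15} directly. In case (iii), writing $u(x)=f(|x|)$ by Lemma~\ref{lm4} reduces \eqref{27} to the one-dimensional weighted Hardy-type inequality
\begin{equation*}
\Bigl(\int_{0}^{\infty}t^{c^{1}+N-1}f(t)^{r}\,dt\Bigr)^{1/r}\leq C\Bigl(\int_{0}^{\infty}t^{b+N-1}|f'(t)|^{p}\,dt\Bigr)^{1/p},
\end{equation*}
in which the dimensional matching $(c^{1}+N)/r=(b-p+N)/p$ holds by definition of $c^{1}$. Under the sign hypothesis $a\geq -N$, $b-p>-N$, Bradley's inequality supplies the constant when $r\geq p$; the opposite-sign subcase $a\leq -N$, $b-p<-N$ is reduced to this one by the Kelvin transform (Remark~\ref{rm2}). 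In case (iv) with $p\neq q$, set $\eta:=(a+N)/q=(b-p+N)/p\neq 0$ and substitute $f(t)=t^{-\eta}g(\ln t)$ in the reduced one-dimensional inequality; the weighted integrals become, up to the first-order perturbation $g'-\eta g$, unweighted $L^{p}$, $L^{q}$ and $L^{r}$ norms on $\mathbb{R}$. Since $r$ lies between $p$ and $q$, H\"older's inequality gives $\|g\|_{r}\leq\|g\|_{p}^{\theta}\|g\|_{q}^{1-\theta}$ with $\theta=(1/r-1/q)/(1/p-1/q)=p(r-q)/(r(p-q))$, and the elementary one-dimensional bound $\|g\|_{p}\leq|\eta|^{-1}\|g'-\eta g\|_{p}$ (obtained by integrating the ODE $g'-\eta g=h$ and applying Young's convolution inequality) converts this into \eqref{28}. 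The trivial subcase $r=q$, $c=a$ is immediate from the definition of $\|\cdot\|_{a,q}$. Case (v) is the variant $\eta=0$: the same logarithmic substitution yields the unweighted Gagliardo--Nirenberg inequality on $\mathbb{R}$, and a direct check shows that its scaling exponent equals $\breve{\theta}$, giving \eqref{29}.

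The principal obstacle will be the endpoint case (iii): Lemma~\ref{lm15} is silent on $c=c^{1}$, and the Hardy step requires case-by-case verification of the Muckenhoupt--Bradley power-weight condition, with the two sign subcases calling for separate Kelvin-transform maneuvers. A secondary bookkeeping difficulty is to verify that the exponents produced by the logarithmic substitution match precisely the stated $\theta=p(r-q)/(r(p-q))$ in \eqref{28} and $\breve{\theta}$ in \eqref{29}; this amounts to elementary but careful algebraic manipulation of the scaling identities \eqref{1}--\eqref{4}.
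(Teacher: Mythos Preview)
Your sufficiency arguments are largely aligned with the paper. For (i)--(ii) you invoke Lemma~\ref{lm15}, exactly as the paper does; for (iii) you reduce via Lemma~\ref{lm4} to Bradley's weighted Hardy inequality and pass between the two sign cases by Kelvin transform, which is the paper's argument (the paper starts from $a\leq -N$, $b-p<-N$ and uses the forward Hardy inequality $f_u(t)\leq\int_0^t|f_u'|$, but the two directions are Kelvin-equivalent). For (v) the paper also makes the logarithmic substitution $g(s)=f_u(e^s)$; the only difference is that it derives the needed one-dimensional Gagliardo--Nirenberg inequality internally from parts (i)--(ii) of the theorem itself rather than citing it as known.

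For (iv) with $p\neq q$ your route is genuinely different from the paper's and a bit longer. The paper never passes to logarithmic variables: writing $r=\mu p+(1-\mu)q$ with $\mu=(r-q)/(p-q)\in[0,1]$ gives $c=\mu(b-p)+(1-\mu)a$, so H\"older's inequality applied directly to $|x|^c|u|^r=(|x|^{b-p}|u|^p)^\mu(|x|^a|u|^q)^{1-\mu}$ yields $\|u\|_{c,r}^r\leq\|u\|_{b-p,p}^{\mu p}\|u\|_{a,q}^{(1-\mu)q}$, and then case (iii) with $r=p$ supplies $\|u\|_{b-p,p}\leq C\|\partial_\rho u\|_{b,p}$. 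Your resolvent bound $\|g\|_p\leq|\eta|^{-1}\|g'-\eta g\|_p$ is correct (the homogeneous solution $e^{\eta s}$ is killed because $g\in L^q(\mathbb{R})$), but it amounts to re-proving the Hardy inequality of (iii) through the back door.

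The genuine gap is in your necessity paragraph. You correctly note that the translation $\varphi(\cdot+x_0)$ in Theorem~\ref{th3} destroys radial symmetry, but your proposed fix---rescaling combined with Theorem~\ref{th9} and Lemma~\ref{lm14}---does not produce an obstruction: those are tools that \emph{establish} embeddings, not rule them out. The paper handles necessity by observing that the radial problem in $\mathbb{R}^N$ is equivalent, via $u\mapsto f_u$, to Theorem~\ref{th0} in dimension $1$ with parameters $a+N-1$, $b+N-1$, $c+N-1$; there $p^*=\infty$, and the condition $\theta_c(1/p-1/N-1/q)\leq 1/r-1/q$ reads exactly $\theta_c\geq\breve{\theta}$, so the necessary conditions of Section~\ref{necessary} transfer directly. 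If you prefer a self-contained radial argument, use annular bumps $u_R(x)=\varphi(|x|-R)$ with $\varphi\in C_0^\infty(1,2)$: then $\|u_R\|_{c,r}\sim R^{(c+N-1)/r}$, $\|\partial_\rho u_R\|_{b,p}\sim R^{(b+N-1)/p}$, $\|u_R\|_{a,q}\sim R^{(a+N-1)/q}$, and inserting these into the multiplicative form \eqref{12} and letting $R\to\infty$ gives, after using \eqref{4}, precisely $\theta_c\geq\breve{\theta}$.
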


\begin{proof}
The theorem is (as it should be) equivalent to Theorem \ref{th0} when $N=1$
(in particular, $p^{*}=\infty $ regardless of $p$ and $\frac{1}{p}-\frac{1}{N%
}-\frac{1}{q}=-\left( \frac{1}{p^{\prime }}+\frac{1}{q}\right) $) and $a,b,c$
replaced by $a+N-1,b+N-1$ and $c+N-1,$ respectively. Since the hypotheses of
Theorem \ref{th0} are necessary (Section \ref{necessary}), the necessity
follows.

The sufficiency of parts (i) and (ii) was already proved in Lemma \ref{lm15}%
. To complete the proof, we show that parts (iii), (iv) or (v) are also
sufficient.

(iii) By Kelvin transform (Remark \ref{rm2}), we may assume $a\leq -N$ and $%
b-p<-N.$ In particular, $u\in \widetilde{W}_{loc,+}^{1,1}$ by part (iv) of
Lemma \ref{lm6}. By part (i) of the same lemma, we may also assume $u\geq 0$
with no loss of generality. Then, $u(x)=f_{u}(|x|)$ with $f_{u}\in
W_{loc}^{1,1}(0,\infty ),f_{u}\geq 0$ and $\underline{\lim }_{t\rightarrow
0^{+}}f_{u}(t)=0,$ so that $f_{u}(t)\leq \int_{0}^{t}|f_{u}^{\prime }(\tau
)|d\tau $ by (\ref{20}) with $\gamma =1-N$ and $f=f_{u}.$

On the other hand, 
\begin{equation*}
\left( \int_{0}^{\infty }t^{\frac{r(b-p+N)}{p}-1}\left(
\int_{0}^{t}|f_{u}^{\prime }(\tau )|d\tau \right) ^{r}dt\right) ^{\frac{1}{r}
}\leq C\left( \int_{0}^{\infty }t^{b+N-1}|f_{u}^{\prime }(t)|^{p}dt\right) ^{%
\frac{1}{p}},
\end{equation*}
by a weighted Hardy inequality of Bradley for nonnegative measurable
functions on $(0,\infty )$ (\cite[Theorem 1]{Br78}, \cite[p. 40]{Ma85})
inspired by Muckenhoupt \cite{Mu72} when $r=p.$ This yields (\ref{27}) since 
$c=c^{1}=\frac{r(b-p+N)}{p}-N$ and $\partial _{\rho }u(x)=f_{u}^{\prime
}(|x|)$ (Lemma \ref{lm4}).

(iv) This is trivial if $r=q$ and $c=a.$ From now on, $p\neq q$ and $r$ is
between $p$ and $q$ (both included), so that $r=\mu p+(1-\mu )q$ where $\mu =%
\frac{r-q}{p-q}\in [0,1],$ whence $\mu (b-p)+(1-\mu )a=c^{0}=c$ (use $b-p=%
\frac{p(a+N)}{q}-N$). Thus, if $u$ is measurable, $\int_{\Bbb{R}
^{N}}|x|^{c}|u|^{r}=\int_{\Bbb{R}^{N}}(|x|^{b-p}|u|^{p})^{\mu
}(|x|^{a}|u|^{q})^{1-\mu }$ and, by H\"{o}lder's inequality, 
\begin{equation}
||u||_{c,r}^{r}\leq ||u||_{b-p,p}^{\mu p}||u||_{a,q}^{(1-\mu )q}.  \label{30}
\end{equation}
Since $\frac{a+N}{q}=\frac{b-p+N}{p}\neq 0,$ both $a$ and $b-p$ are on the
same side of $-N$ and neither equals $-N.$ Therefore, when $u\in \widetilde{W%
}_{\{a,b\}}^{1,(q,p)}$ is radially symmetric, $||u||_{b-p,p}\leq C||\partial
_{\rho }u||_{b,p}$ by (iii) with $r=p$ (hence $c=b-p$). By substitution into
(\ref{30}), $||u||_{c,r}\leq C||\partial _{\rho }u||_{b,p}^{\mu \frac{p}{r}
}||u||_{a,q}^{(1-\mu )\frac{q}{r}}=C||\partial _{\rho }u||_{b,p}^{\theta
}||u||_{a,q}^{1-\theta }$ with $\theta =\mu \frac{p}{r}=\frac{p(r-q)}{r(p-q)}
.$ This proves (\ref{28}) and hence the embedding property as well.

(v) If $r>q,N=1$ and $a=b=c=0,$ it follows from part (i) of the theorem if $%
p=1$ and from its part (ii) if $p>1,$ that the subspace of even functions in
the unweighted space $W^{1,(q,p)}(\Bbb{R}_{*})$ is continuously embedded
into $L^{r}(\Bbb{R}).$ In this one-dimensional setting, this readily implies
the same result without the evenness assumption, i.e., $W^{1,(q,p)}(\Bbb{R}
_{*})\hookrightarrow L^{r}(\Bbb{R}),$ and then 
\begin{equation}
||g||_{r}\leq C||g^{\prime }||_{p}^{\breve{\theta}}||g||_{q}^{1-\breve{\theta%
}},  \label{31}
\end{equation}
for $g\in W^{1,(q,p)}(\Bbb{R}_{*})$ by the usual rescaling argument. In
particular, (\ref{31}) holds with $g\in W^{1,(q,p)}(\Bbb{R})$ (if $g\in
C_{0}^{\infty }(\Bbb{R})$ and $q\geq 1,$ this also follows from \cite
{CaKoNi84}).

Now, as in (iii), if $u\in \widetilde{W}_{\{-N,p-N\}}^{1,(q,p)}$ is radially
symmetric, then $u(x)=f_{u}(|x|)$ with $f_{u}\in W_{loc}^{1,1}(0,\infty )$
and $\partial _{\rho }u(x)=f_{u}^{\prime }(|x|),$ so that $%
||u||_{-N,q}^{q}=N\omega _{N}\int_{0}^{\infty }t^{-1}|f_{u}(t)|^{q}dt<\infty 
$ and $||\partial _{\rho }u||_{p-N,p}^{p}=N\omega _{N}\int_{0}^{\infty
}t^{p-1}|f_{u}^{\prime }(t)|^{p}dt<\infty .$

On the other hand, with $g(s):=f_{u}(e^{s}),$ it is readily checked that $%
g\in W^{1,(q,p)}(\Bbb{R})$ with $||g||_{q}^{q}=\int_{0}^{\infty
}t^{-1}|f_{u}(t)|^{q}dt$ and $||g^{\prime }||_{p}^{p}=\int_{0}^{\infty
}t^{p-1}|f_{u}^{\prime }(t)|^{p}dt.$ Therefore, (\ref{31}) may be rewritten
as $||u||_{-N,r}\leq C||\partial _{\rho }u||_{p-N,p}^{\breve{\theta}
}||u||_{-N,q}^{1-\breve{\theta}}.$ This completes the proof.
\end{proof}

\begin{remark}
Since $\widetilde{W}_{\{a,b\}}^{1,(q,p)}$ and $W_{\{a,b\}}^{1,(q,p)}(\Bbb{R}%
_{*}^{N})$ contain the same radially symmetric functions and the induced
norms are the same, Theorem \ref{th16} is also true when $\widetilde{W}%
_{\{a,b\}}^{1,(q,p)}$ is replaced by $W_{\{a,b\}}^{1,(q,p)}(\Bbb{R}%
_{*}^{N}). $
\end{remark}

\begin{remark}
\label{rm4}In part (ii) of Theorem \ref{th16}, the admissible interval is
empty if $\breve{\theta}\geq \theta _{-N},$ which can only happen if $r>q.$
However, a careful examination of the proofs reveals that the subspace of
(radially symmetric) functions with support in a ball $\overline{B}$
centered at $0$ is continuously embedded into $L^{r}(\Bbb{R}^{N};|x|^{c}dx)$
if $c>-N$ when $b-p<-N<a$ and if $c\geq \breve{c}$ (even if $\breve{c}=-N$)
when $a<-N<b-p.$ For functions with support in $\Bbb{R}^{N}\backslash B$ the
conditions ($c\leq \breve{c}$ if $b-p<-N<a$ and $c<-N$ if $a<-N<b-p$) follow
by Kelvin transform. Details are left to the reader.
\end{remark}

\section{Embedding theorem when $1\leq r\leq \min \{p,q\}$\label{embedding1}}

We now extend Theorem \ref{th16} to the non-symmetric case when $1\leq r\leq
\min \{p,q\}.$ To do this, we need the following refinement of part (ii) of
Lemma \ref{lm7}:

\begin{lemma}
\label{lm17}Let $a,b\in \Bbb{R}$ and $1\leq r\leq p,q<\infty $ be given. If $%
u\in \widetilde{W}_{\{a,b\}}^{1,(q,p)},$ then $v:=[(|u|^{r})_{S}]^{\frac{1}{r%
}}\in \widetilde{W}_{\{a,b\}}^{1,(q,p)}.$ Furthermore, $||v||_{a,q}\leq
||u||_{a,q}$ and $||\,\partial _{\rho }v||_{b,p}\leq ||\partial _{\rho
}u||_{b,p},$ so that $||v||_{\{a,b\},(q,p)}\leq ||u||_{\{a,b\},(q,p)}.$
\end{lemma}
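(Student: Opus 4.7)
The plan is to work on the half-line through the spherical mean $f := f_{|u|^r}$ and recover $v$ via $v(x) = f(|x|)^{1/r}$, moving freely between $\mathbb{R}^N_*$ and $(0,\infty)$ by means of Lemma \ref{lm4}. Two ingredients will carry the argument: a chain rule for the power map $t \mapsto t^{1/r}$ on $(0,\infty)$ (which is delicate because this map is only $1/r$-Hölder at $0$ when $r>1$), and two applications of Jensen's inequality with exponents $q/r \geq 1$ and $p/r \geq 1$, both justified by the hypothesis $r \leq \min\{p,q\}$.

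First I would verify that $u \in \widetilde{W}^{1,r}_{loc}$, which follows from $r \leq \min\{p,q\}$ and the local boundedness of $|x|^a,|x|^b$ on $\mathbb{R}^N_*$. The chain rule recalled before Lemma \ref{lm5} then gives $|u|^r \in \widetilde{W}^{1,1}_{loc}$ with $\partial_\rho(|u|^r) = r|u|^{r-1}(\mathrm{sgn}\, u)\partial_\rho u$, so by Lemma \ref{lm4} the spherical mean $f$ lies in $W^{1,1}_{loc}(0,\infty)$, is nonnegative, and its derivative is the spherical mean of $\partial_\rho(|u|^r)$. Hölder's inequality on $\mathbb{S}^{N-1}$ with exponents $r/(r-1)$ and $r$ yields the pointwise bound
\begin{equation*}
|f'(t)| \leq r\, f(t)^{(r-1)/r}\, \mathcal{V}(t)^{1/r}, \qquad \mathcal{V}(t) := (N\omega_N)^{-1}\int_{\mathbb{S}^{N-1}}|\partial_\rho u(t\sigma)|^r d\sigma.
\end{equation*}

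The main obstacle is the chain rule for $g := f^{1/r}$: showing $g \in W^{1,1}_{loc}(0,\infty)$ with $|g'(t)|^r \leq \mathcal{V}(t)$ a.e. I would regularize by $g_\epsilon := (f+\epsilon)^{1/r}$, which lies in $W^{1,1}_{loc}$ with $g_\epsilon' = f'/[r(f+\epsilon)^{(r-1)/r}]$; combining this with the preceding bound on $|f'|$, together with the standard fact that $f' = 0$ a.e.\ on $\{f=0\}$, gives $|g_\epsilon'| \leq \mathcal{V}^{1/r}$ a.e., uniformly in $\epsilon$. Passing to $\epsilon \to 0^+$ is legitimate because $g_\epsilon \to g$ uniformly on compact subintervals of $(0,\infty)$ (since $|(s+\epsilon)^{1/r} - s^{1/r}| \leq \epsilon^{1/r}$ for $s \geq 0$, $r \geq 1$); the distributional derivatives converge and inherit the bound $|g'|^r \leq \mathcal{V}$ a.e.

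Finally, Lemma \ref{lm4} applied to $g$ returns $v(x) = g(|x|) \in \widetilde{W}^{1,1}_{loc} \subset L^1_{loc}(\mathbb{R}^N_*)$ with $\partial_\rho v(x) = g'(|x|)$. The two norm inequalities are then straight Jensen estimates: using $q/r \geq 1$,
\begin{equation*}
\|v\|_{a,q}^q = N\omega_N \int_0^\infty t^{a+N-1} \left[(N\omega_N)^{-1}\int_{\mathbb{S}^{N-1}} |u(t\sigma)|^r d\sigma\right]^{q/r} dt \leq \|u\|_{a,q}^q,
\end{equation*}
and similarly, integrating $|\partial_\rho v(x)|^p \leq \mathcal{V}(|x|)^{p/r}$ in spherical coordinates and applying Jensen with exponent $p/r \geq 1$ gives $\|\partial_\rho v\|_{b,p} \leq \|\partial_\rho u\|_{b,p}$. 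The triangle inequality for $\|\cdot\|_{\{a,b\},(q,p)}$ then delivers the last claim. The hypothesis $r \geq 1$ is used precisely at the Hölder step in the bound for $|f'|$ and in the uniform convergence of the $g_\epsilon$; without it the chain rule for $f^{1/r}$ breaks down, which is why $r$ is bounded below by $1$ rather than by some smaller number.
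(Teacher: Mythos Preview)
Your proof is correct and follows essentially the same route as the paper: reduce to the half-line via the spherical mean $f=f_{|u|^r}$, justify the chain rule for $g=f^{1/r}$, and finish with H\"older/Jensen on the sphere using $q/r\geq 1$ and $p/r\geq 1$. The only cosmetic difference is that the paper invokes the de~la~Vall\'ee~Poussin chain-rule criterion to obtain $g\in W^{1,1}_{loc}$, whereas you reach the same conclusion by the $\epsilon$-regularization $g_\epsilon=(f+\epsilon)^{1/r}$ and a dominated-convergence limit.
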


\begin{proof}
By part (i) of Lemma \ref{lm7}, $|u|\in \widetilde{W}_{\{a,b\}}^{1,(q,p)},$
so that it is not restrictive to assume $u\geq 0.$ Since $%
v(x)=[f_{u^{r}}(|x|)]^{\frac{1}{r}}$ with $f_{u^{r}}(t):=(N\omega
_{N})^{-1}\int_{\Bbb{S}^{N-1}}u^{r}(t\sigma )d\sigma ,$ it follows from $%
r\leq q$ and H\"{o}lder's inequality that $(v(x))^{q}\leq (N\omega
_{N})^{-1}\int_{\Bbb{S}^{N-1}}u^{q}(|x|\sigma )d\sigma .$ Thus, $%
||v||_{a,q}\leq ||u||_{a,q}$ is clear.

We now show that $\partial _{\rho }v\in L^{p}(\Bbb{R}^{N};|x|^{b}dx)$ and
prove the desired estimate. Formally, if $h:=(f_{u^{r}})^{\frac{1}{r}},$
then $h^{\prime }=\frac{1}{r}(f_{u^{r}})^{-\frac{1}{r^{\prime }}
}f_{u^{r}}^{\prime }$ but, by the de la Vall\'{e}e Poussin criterion (\cite
{Va15}, \cite[Lemma 1.2]{MaMi72}, \cite[Corollary 8]{SeVa69}), this formula
holds and $h\in W_{loc}^{1,1}(0,\infty )$ if and only if $f_{u^{r}}\in
W_{loc}^{1,1}(0,\infty )$ and $(f_{u^{r}})^{-\frac{1}{r^{\prime }}%
}f_{u^{r}}^{\prime }\in L_{loc}^{1}(0,\infty ),$ with the understanding that 
$(f_{u^{r}})^{-\frac{1}{r^{\prime }}}f_{u^{r}}^{\prime }=0$ when $%
f_{u^{r}}^{\prime }=0,$ irrespective of whether $(f_{u^{r}})^{-\frac{1}{
r^{\prime }}}$ is defined. Since $f_{u^{r}}^{\prime }=0$ a.e. on $%
(f_{u^{r}})^{-1}(0),$ this amounts to defining $(f_{u^{r}})^{-\frac{1}{
r^{\prime }}}f_{u^{r}}^{\prime }=0$ on $(f_{u^{r}})^{-1}(0).$ That $%
(f_{u^{r}})^{-\frac{1}{r^{\prime }}}f_{u^{r}}^{\prime }\in
L_{loc}^{1}(0,\infty )$ is verified below.

First, $u\in \widetilde{W}_{loc}^{1,r}$ since $r\leq p,q.$ By Lemma \ref{lm5}%
, $u^{r}\in \widetilde{W}_{loc}^{1,1}$ (so that $f_{u^{r}}\in
W_{loc}^{1,1}(0,\infty )$) and $\partial _{\rho }(u^{r})=ru^{r-1}\partial
_{\rho }u.$ Upon replacing $u$ by $u^{r}$ in (\ref{16}) and by H\"{o}lder's
inequality, it follows that $|f_{u^{r}}^{\prime }|\leq r(f_{u^{r}})^{\frac{1%
}{r^{\prime }}}\left( (N\omega _{N})^{-1}\int_{\Bbb{S}^{N-1}}|\partial
_{\rho }u|^{r}d\sigma \right) ^{\frac{1}{r}}\leq r(f_{u^{r}})^{\frac{1}{%
r^{\prime }}}\left( (N\omega _{N})^{-1}\int_{\Bbb{S}^{N-1}}|\partial _{\rho
}u|^{p}d\sigma \right) ^{\frac{1}{p}}.$ Since $(f_{u^{r}})^{-\frac{1}{
r^{\prime }}}f_{u^{r}}^{\prime }=0$ on $(f_{u^{r}})^{-1}(0),$ this yields $%
(f_{u^{r}})^{-\frac{1}{r^{\prime }}}|f_{u^{r}}^{\prime }|\leq r\left(
(N\omega _{N})^{-1}\int_{\Bbb{S}^{N-1}}|\partial _{\rho }u|^{p}d\sigma
\right) ^{\frac{1}{p}}\in L_{loc}^{p}(0,\infty )\subset L_{loc}^{1}(0,\infty
).$

From the above, $h\in W_{loc}^{1,1}(0,\infty ),h^{\prime }=\frac{1}{r}
(f_{u^{r}})^{-\frac{1}{r^{\prime }}}f_{u^{r}}^{\prime }$ and, in addition, $%
|h^{\prime }(t)|\leq \left( (N\omega _{N})^{-1}\int_{\Bbb{S}^{N-1}}|\partial
_{\rho }u(t\sigma )|^{p}d\sigma \right) ^{\frac{1}{p}}.$ Since $\partial
_{\rho }v(x)=h^{\prime }(|x|)$ by Lemma \ref{lm4}, $|\partial _{\rho
}v(x)|^{p}\leq (N\omega _{N})^{-1}\int_{\Bbb{\ \ S}^{N-1}}|\partial _{\rho
}u(|x|\sigma )|^{p}d\sigma ,$ so that $||\,\partial _{\rho }v||_{b,p}\leq
||\partial _{\rho }u||_{b,p}.$
\end{proof}

\begin{theorem}
\label{th18} Let $a,b,c\in \Bbb{R}$ and $1\leq r\leq p,q<\infty $ be given.
Then, $\widetilde{W}_{\{a,b\}}^{1,(q,p)}\hookrightarrow L^{r}(\Bbb{R}%
^{N};|x|^{c}dx)$ (and hence $\widetilde{W}_{\{a,b\}}^{1,(q,p)}%
\hookrightarrow \widetilde{W}_{\{c,b\}}^{1,(r,p)}$) in the following cases: 
\newline
(i) $a$ and $b-p$ are on the same side of $-N$ (including $-N$), $\frac{a+N}{%
q}\neq \frac{b-p+N}{p}$ and $c$ is in the open interval with endpoints $%
c^{0} $ and $c^{1}.$\newline
(ii) $a$ and $b-p$ are strictly on opposite sides of $-N$ (hence $\frac{a+N}{%
q}\neq \frac{b-p+N}{p}$) and $c$ is in the open interval with endpoints $%
c^{0}$ and $-N.$\newline
(iii) $r=q$ $\leq p$ and $c=a.$\newline
(iv) $r=p$ $\leq q,$ $a\leq -N$ and $b-p<-N$ or $a\geq -N$ and $b-p>-N,$ and 
$c=b-p.$
\end{theorem}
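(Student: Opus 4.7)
The strategy is to reduce to the radially symmetric case already settled in Theorem~\ref{th16} by means of the symmetrization $u\mapsto v:=[(|u|^r)_S]^{1/r}$ supplied by Lemma~\ref{lm17}. That lemma produces, out of an arbitrary $u\in\widetilde{W}_{\{a,b\}}^{1,(q,p)}$, a radially symmetric function $v$ in the same space with $\|v\|_{a,q}\le\|u\|_{a,q}$ and $\|\partial_\rho v\|_{b,p}\le\|\partial_\rho u\|_{b,p}$; the hypothesis $1\le r\le\min\{p,q\}$ is exactly what Lemma~\ref{lm17} requires.

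The first step is to record the crucial norm-preservation identity $\|v\|_{c,r}=\|u\|_{c,r}$. By construction $v^r=f_{|u|^r}\circ|\cdot|$ is the spherical mean of $|u|^r$, so since $|x|^c$ is radial, Fubini in spherical coordinates gives
\[
\|v\|_{c,r}^r=\int_{\mathbb{R}^N}|x|^c (|u|^r)_S(x)\,dx=\int_{\mathbb{R}^N}|x|^c|u(x)|^r\,dx=\|u\|_{c,r}^r.
\]
The second step is to apply Theorem~\ref{th16} to the radial function $v$ and verify that the four cases listed here each fall under one of the cases of Theorem~\ref{th16}. Cases (i) and (ii) correspond to parts (i) and (ii) of Theorem~\ref{th16}: since $r\le q$, the threshold $\breve\theta=(1-q/r)(q/p'+1)^{-1}$ is non-positive, so the condition $\theta_c\ge\breve\theta$ is automatic for every $c$ in the admissible open interval. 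Case (iii) ($r=q\le p$, $c=a$) is the first branch of Theorem~\ref{th16}(iv). Case (iv) ($r=p\le q$, $c=b-p$) is Theorem~\ref{th16}(iii), noting that when $r=p$ the point $c^1=r(b-p+N)/p-N$ simplifies to $b-p$.

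Concatenating the steps gives $\|u\|_{c,r}=\|v\|_{c,r}\le C\|v\|_{\{a,b\},(q,p)}\le C\|u\|_{\{a,b\},(q,p)}$, which is the desired continuous embedding $\widetilde{W}_{\{a,b\}}^{1,(q,p)}\hookrightarrow L^r(\mathbb{R}^N;|x|^c dx)$. The real obstacle has already been absorbed by Lemma~\ref{lm17}, where the non-smooth composition with $t\mapsto t^{1/r}$ required the de la Vall\'ee Poussin criterion to justify the chain rule and yield the pointwise bound $|\partial_\rho v(x)|^p\le(N\omega_N)^{-1}\!\int_{\mathbb{S}^{N-1}}|\partial_\rho u(|x|\sigma)|^p d\sigma$; what remains at this stage is essentially bookkeeping of parameters together with the Fubini identity above.
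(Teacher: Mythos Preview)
Your proposal is correct and follows essentially the same approach as the paper: reduce to the radial case via the symmetrization $v=[(|u|^r)_S]^{1/r}$ of Lemma~\ref{lm17}, invoke the identity $\|v\|_{c,r}=\|u\|_{c,r}$, and then appeal to the appropriate cases of Theorem~\ref{th16} (with $\breve\theta\le 0$ since $r\le q$). The only cosmetic difference is that the paper dismisses case~(iii) as trivial (since $\|u\|_{a,q}$ is already part of the $\widetilde{W}_{\{a,b\}}^{1,(q,p)}$ norm), whereas you route it through the first branch of Theorem~\ref{th16}(iv); both are fine.
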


\begin{proof}
(i)-(ii) Set $v:=[(|u|^{r})_{S}]^{\frac{1}{r}}.$ By Lemma \ref{lm17}, $v\in 
\widetilde{W}_{\{a,b\}}^{1,(q,p)}$ and $||v||_{\{a,b\},(q,p)}\leq
||u||_{\{a,b\},(q,p)}.$ Thus, since $v$ is radially symmetric, it follows
from parts (i) and (ii) of Theorem \ref{th16} (where $\theta _{c}\geq \breve{
\theta}$ holds since $\breve{\theta}\leq 0$) that $||v||_{c,r}\leq
C||u||_{\{a,b\},(q,p)},$ where $C>0$ is independent of $u.$ The conclusion
follows from the remark that $||v||_{c,r}=||u||_{c,r}.$

(iii) is trivial.

(iv) Argue as in (i)-(ii) above, now using part (iii) of Theorem \ref{th16}
with $r=p.$
\end{proof}

\smallskip When $\widetilde{W}_{\{a,b\}}^{1,(q,p)}$ is replaced by the
smaller space $W_{\{a,b\}}^{1,(q,p)}(\Bbb{R}_{*}^{N}),$ Theorem \ref{th18}
coincides with Theorem \ref{th0} when $1\leq r\leq p,q<\infty .$ Indeed, $%
r\leq \min \{p,q\}$ implies $r\leq \min \{p^{*},q\},$ so that the inequality 
$\theta _{c}\left( \frac{1}{p}-\frac{1}{N}-\frac{1}{q}\right) \leq \frac{1}{r%
}-\frac{1}{q}$ holds for every $c$ in the closed interval with endpoints $%
c^{0}$ and $c^{1}$ (Remark \ref{rm1}).

\section{The Caffarelli-Kohn-Nirenberg Lemma and application\label{CKNlemma}}

The reduction to the radially symmetric case in the previous section cannot
be used when $r>\min \{p,q\}.$ Consistent with the strategy outlined in the
Introduction, this section is devoted to the formulation and proof of an
embedding property for a direct complement of the subspace of radially
symmetric functions.

It will be necessary to confine attention to the space $W_{\{a,b%
\}}^{1,(q,p)}(\Bbb{R}_{*}^{N})$ (as opposed to $\widetilde{W}
_{\{a,b\}}^{1,(q,p)}$), because integrability conditions about all the first
order partial derivatives are implicitly required. While phrased differently
and under less general conditions, Lemma \ref{lm19} below is already
contained in \cite{CaKoNi84}.

\begin{lemma}[CKN lemma]
\label{lm19}Let $a,b,c\in \Bbb{R}$ and $1\leq p,q,r<\infty $ be given and
suppose that there are $\delta \leq \frac{b}{p}$ and $\theta \in [0,1]$ such
that:\newline
(i) $\frac{c}{r}=\theta \delta +(1-\theta )\frac{a}{q}.$\newline
(ii) $\frac{c+N}{r}=\theta \frac{b-p+N}{p}+(1-\theta )\frac{a+N}{q}.$\newline
(iii) $\frac{\theta r}{p}+\frac{(1-\theta )r}{q}\geq 1.$\newline
Then, 
\begin{equation}
W_{0}:=\left\{ u\in W_{\{a,b\}}^{1,(q,p)}(\Bbb{R}_{*}^{N}):u_{S}=0\right\}
\hookrightarrow L^{r}(\Bbb{R}^{N};|x|^{c}dx)  \label{33}
\end{equation}
and there is a constant $C>0$ such that 
\begin{equation}
||u||_{c,r}\leq C||\nabla u||_{b,p}^{\theta }||u||_{a,q}^{(1-\theta
)},\qquad \forall u\in W_{0}.  \label{34}
\end{equation}
\end{lemma}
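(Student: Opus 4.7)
My plan is a three-step localize--scale--sum strategy. First, decompose $\Bbb{R}_*^N$ into the disjoint dyadic annuli $A_n := \{x\in\Bbb{R}^N : 2^n \leq |x| < 2^{n+1}\}$ for $n\in\Bbb{Z}$, and let $A := A_0$. On $A$ I would prove a Sobolev--Poincar\'e--interpolation inequality for functions with zero spherical mean, rescale it to each $A_n$ by a change of variables, and sum the local estimates via a discrete H\"older-type inequality.

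For the local step, I claim that for $v\in W^{1,p}(A)\cap L^q(A)$ with $v_S\equiv 0$ on $A$,
$$\|v\|_{L^r(A)} \leq C\,\|\nabla v\|_{L^p(A)}^{\theta}\,\|v\|_{L^q(A)}^{1-\theta}.$$
Since $v_S\equiv 0$ forces $\int_A v = 0$, the Sobolev--Poincar\'e inequality yields $\|v\|_{L^{r_1}(A)} \leq C\|\nabla v\|_{L^p(A)}$ for every $r_1\geq 1$ with $1/r_1 \geq 1/p - 1/N$ (with the standard convention when $p\geq N$), and plain H\"older interpolation gives $\|v\|_{L^r(A)}\leq \|v\|_{L^{r_1}(A)}^{\theta}\|v\|_{L^q(A)}^{1-\theta}$ whenever $1/r = \theta/r_1 + (1-\theta)/q$. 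A valid $r_1$ can be chosen exactly when
$$\theta\left(\tfrac{1}{p}-\tfrac{1}{N}\right) + \tfrac{1-\theta}{q}\;\leq\;\tfrac{1}{r}\;\leq\;\tfrac{\theta}{p} + \tfrac{1-\theta}{q}.$$
The upper bound is precisely (iii); the lower bound follows by subtracting (i) from (ii) and invoking $\delta\leq b/p$. The extremal cases $\theta\in\{0,1\}$ reduce respectively to H\"older on the bounded set $A$ (which forces $r=q$, $c=a$) and to Sobolev--Poincar\'e alone ($p\leq r\leq p^*$).

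Next, apply the local inequality to $v_n(y) := u(2^n y)$; the hypothesis $u_S\equiv 0$ transfers to $(v_n)_S\equiv 0$. A change of variables converts $L^s(A)$-norms of $v_n$ into $L^s(A_n)$-norms of $u$ with explicit powers of $2^n$, and since $|x|^a\asymp 2^{na}$, $|x|^b\asymp 2^{nb}$, $|x|^c\asymp 2^{nc}$ on $A_n$, one obtains
$$\left(\int_{A_n}|x|^c|u|^r\,dx\right)^{1/r} \leq C\left(\int_{A_n}|x|^b|\nabla u|^p\,dx\right)^{\theta/p}\left(\int_{A_n}|x|^a|u|^q\,dx\right)^{(1-\theta)/q}$$
with $C$ independent of $n$; the net power of $2^n$ cancels precisely because of condition (ii). Setting $\alpha_n := \int_{A_n}|x|^b|\nabla u|^p\,dx$, $\beta_n := \int_{A_n}|x|^a|u|^q\,dx$, $s := \theta r/p$, $t := (1-\theta)r/q$, raising to the $r$-th power and summing over $n$ yields $\|u\|_{c,r}^r \leq C^r\sum_n \alpha_n^s\beta_n^t$. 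By (iii), $s+t\geq 1$, so with $\gamma_n := \alpha_n^{s/(s+t)}\beta_n^{t/(s+t)}$ one has $\alpha_n^s\beta_n^t = \gamma_n^{s+t}$, and the $\ell^p$-monotonicity $\|\gamma\|_{\ell^{s+t}}\leq\|\gamma\|_{\ell^1}$ (valid because $s+t\geq 1$) gives $\sum_n\gamma_n^{s+t}\leq(\sum_n\gamma_n)^{s+t}$; H\"older with conjugate exponents $(s+t)/s,\,(s+t)/t$ then gives $\sum_n\gamma_n \leq (\sum_n\alpha_n)^{s/(s+t)}(\sum_n\beta_n)^{t/(s+t)}$. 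Combining and taking $r$-th roots produces (\ref{34}), and (\ref{33}) follows.

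The chief obstacle is Step 1: one must reconcile the algebraic conditions (i), (ii), (iii) and $\delta\leq b/p$ with the joint availability of the Sobolev--Poincar\'e inequality and the H\"older interpolation on $A$, i.e., verify that the admissibility range of the intermediate Sobolev exponent $r_1$ is nonempty. The scaling and summation steps are then essentially bookkeeping, with the role of $\delta$ precisely that of an auxiliary parameter encoding the slack in the Sobolev exponent.
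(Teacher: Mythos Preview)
Your proposal is correct and follows essentially the same route as the paper: a dyadic annular decomposition, an unweighted mean-zero Gagliardo--Nirenberg/Sobolev--Poincar\'e estimate on the reference annulus, rescaling via condition (ii), and summation via the discrete inequality $\sum A_k^{\alpha}B_k^{\beta}\le(\sum A_k)^{\alpha}(\sum B_k)^{\beta}$ for $\alpha+\beta\ge 1$ (your $\ell^{s+t}$--$\ell^1$ monotonicity plus H\"older is exactly the paper's inequality~(37)). The only cosmetic difference is that the paper cites the local estimate from \cite{CaKoNi84} and passes through a density argument in $W^{1,(q,p)}(\Omega_1)$, whereas you derive it directly from Sobolev--Poincar\'e plus H\"older interpolation; your explicit identification of conditions (i)--(iii) and $\delta\le b/p$ with the admissibility range for the intermediate exponent $r_1$ is precisely what underlies the cited argument.
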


\begin{proof}
Of course, it suffices to prove (\ref{34}). For $\tau >0,$ let $\Omega
_{\tau }$ denote the annulus $\{x\in \Bbb{R}^{N}:\tau <|x|<2\tau \}.$ Under
the conditions (i) and (ii) of the lemma\footnote{%
None of the other assumptions in \cite{CaKoNi84} is involved.}, it is shown
in \cite[pp. 262-263]{CaKoNi84} that the \emph{unweighted} inequality 
\begin{equation}
\int_{\Omega _{1}}|u|^{r}\leq C\left( \int_{\Omega _{1}}|\nabla
u|^{p}\right) ^{\frac{\theta r}{p}}\left( \int_{\Omega _{1}}|u|^{q}\right) ^{%
\frac{(1-\theta )r}{q}},  \label{35}
\end{equation}
holds for some constant $C$ and every $u\in C_{0}^{\infty }(\Bbb{R}^{N})$
such that $\int_{\Omega _{1}}u=0.$ The proof relies on the
Gagliardo-Nirenberg and Sobolev inequalities. (Since $a,b,c$ are not
involved in (\ref{35}), what matters is the relation $\frac{1}{r}=\theta
\left( \frac{1}{p}-\frac{1}{N}+\gamma \right) +(1-\theta )\frac{1}{q}$ with $%
\gamma \geq 0;$ that $\gamma =\frac{1}{N}\left( \frac{b}{p}-\delta \right) $
from (i) and (ii) combined, is not relevant at this stage.)

From the geometric properties of $\Omega _{1},$ the denseness of $%
C_{0}^{\infty }(\Bbb{R}^{N})$ in the unweighted space $W^{1,(q,p)}(\Omega
_{1}):=\{u\in L^{q}(\Omega _{1}):\nabla u\in L^{p}(\Omega _{1})\}$ is
routine (see \cite{BeIlNi79}, \cite{Ra79} for more general results) and it
is trivial that denseness remains true if, in both spaces, attention is
confined to functions with mean $0$ on $\Omega _{1}.$ Thus, (\ref{35})
continues to hold for $u\in W^{1,(q,p)}(\Omega _{1})$ such that $%
\int_{\Omega _{1}}u=0$ and hence for $u\in W_{\{a,b\}}^{1,(q,p)}(\Bbb{R}
_{*}^{N})$ such that $\int_{\Omega _{1}}u=0$ since, irrespective of $a$ and $%
b,$ the restrictions to $\Omega _{1}$ of functions in $W_{\{a,b\}}^{1,(q,p)}(%
\Bbb{R}_{*}^{N})$ are obviously in $W^{1,(q,p)}(\Omega _{1}).$

If $x\in \Omega _{1},$ then $|x|^{a},|x|^{b}$ are bounded below and $|x|^{c}$
is bounded above. Thus, after changing $C,$ (\ref{35}) yields $\int_{\Omega
_{1}}|x|^{c}|u|^{r}\leq C\left( \int_{\Omega _{1}}|x|^{b}|\nabla
u|^{p}\right) ^{\frac{\theta r}{p}}\left( \int_{\Omega
_{1}}|x|^{a}|u|^{q}\right) ^{\frac{(1-\theta )r}{q}}$ for every $u\in
W_{\{a,b\}}^{1,(q,p)}(\Bbb{R}_{*}^{N})$ such that $\int_{\Omega _{1}}u=0.$
By rescaling and using (ii), this implies, with the same $C$ independent of $%
\tau ,$ 
\begin{equation}
\int_{\Omega _{\tau }}|x|^{c}|u|^{r}\leq C\left( \int_{\Omega _{\tau
}}|x|^{b}|\nabla u|^{p}\right) ^{\frac{\theta r}{p}}\left( \int_{\Omega
_{\tau }}|x|^{a}|u|^{q}\right) ^{\frac{(1-\theta )r}{q}},  \label{36}
\end{equation}
for every $u\in W_{\{a,b\}}^{1,(q,p)}(\Bbb{R}_{*}^{N})$ such that $%
\int_{\Omega _{\tau }}u=0.$ In particular, (\ref{36}) holds for every $\tau
>0$ and every $u\in W_{0}$ defined in (\ref{33}).

It is also observed in \cite[p. 268]{CaKoNi84} that if $k\in \Bbb{Z}$ and $%
A_{k},B_{k}\geq 0$ and if $\alpha ,\beta \geq 0$ satisfy $\alpha +\beta \geq
1,$ then 
\begin{equation}
\sum_{k\in \Bbb{Z}}A_{k}^{\alpha }B_{k}^{\beta }\leq \left( \sum_{k\in \Bbb{Z%
}}A_{k}\right) ^{\alpha }\left( \sum_{k\in \Bbb{Z}}B_{k}\right) ^{\beta },
\label{37}
\end{equation}
where the first (second) factor on the right is $1$ when $\alpha =0$ ($\beta
=0$). Thus, when condition (iii) holds, (\ref{34}) follows from (\ref{37})
and (\ref{36}) with $\tau =2^{k},k\in \Bbb{Z}.$
\end{proof}

There is a clearer and more convenient formulation of Lemma \ref{lm19}:

\begin{corollary}
\label{cor20}Let $a,b,c\in \Bbb{R}$ and $1\leq p,q,r<\infty $ be given.%
\newline
(i) If $\frac{a+N}{q}\neq \frac{b-p+N}{p}$ and $c$ is in the closed interval
with endpoints $c^{0}$ and $c^{1},$ then $W_{0}\hookrightarrow L^{r}(\Bbb{R}%
^{N};|x|^{c}dx)$ if the following conditions hold (with $\theta _{c}$ given
by (\ref{2})):\newline
(i-{\scriptsize 1}) Either $r=q$ and $c=c^{0}$ ($=a$) or $c\neq c^{0}$ and $%
\theta _{c}\left( \frac{1}{p}-\frac{1}{N}-\frac{1}{q}\right) \leq \frac{1}{r}%
-\frac{1}{q}.$\newline
(i-{\scriptsize 2}) $\frac{\theta _{c}r}{p}+\frac{(1-\theta _{c})r}{q}\geq
1. $\newline
(ii) If $\frac{a+N}{q}=\frac{b-p+N}{p}$ and $c=c^{0}$ ($=c^{1}$), then $%
W_{0}\hookrightarrow L^{r}(\Bbb{R}^{N};|x|^{c}dx)$ if $\min \{p,q\}\leq $ $%
r\leq \max \{p^{*},q\}.$ Furthermore, there is a constant $C>0$ such that,
for every $u\in W_{0},$ 
\begin{equation}
||u||_{c,r}\leq C||\nabla u||_{b,p}\quad \text{ if }p\leq r\leq p^{*},
\label{38}
\end{equation}
\begin{multline}
||u||_{c,r}\leq C||\nabla u||_{b,p}^{\theta }||u||_{a,q}^{1-\theta }\text{
if }r=q\text{ or if }p\neq q\text{ and }  \label{39} \\
\min \{p,q\}\leq r\leq \max \{p,q\},
\end{multline}
where $\theta :=\frac{p(r-q)}{r(p-q)}$ if $p\neq q$ and $\theta =0$ if $%
p=q=r.$
\end{corollary}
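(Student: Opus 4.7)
The plan is to apply Lemma~\ref{lm19} by choosing its parameters $\theta$ and $\delta$ so that the hypotheses translate precisely to the conditions in parts (i) and (ii) of the corollary.

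For part (i), set $\theta := \theta_c$, so that condition (ii) of Lemma~\ref{lm19} is exactly (\ref{4}). When $\theta_c > 0$, condition (i) of Lemma~\ref{lm19} determines $\delta$ uniquely: $\theta_c \delta = \frac{c}{r} - (1-\theta_c)\frac{a}{q}$. Subtracting this from condition (ii) and rearranging transforms the requirement $\delta \leq \frac{b}{p}$ into $\frac{1}{r} - \frac{1}{q} \geq \theta_c\bigl(\frac{1}{p} - \frac{1}{N} - \frac{1}{q}\bigr)$, which is exactly the inequality appearing in (i-1). When $\theta_c = 0$, then $c = c^0$, and the first alternative of (i-1) forces $r = q$, whence $c = c^0 = a$ and any $\delta \leq \frac{b}{p}$ fulfils conditions (i) and (ii) of Lemma~\ref{lm19}. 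Condition (iii) of Lemma~\ref{lm19} is (i-2), which closes part (i).

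For part (ii), the equality $\frac{a+N}{q} = \frac{b-p+N}{p}$ combined with $c = c^0 = c^1$ gives $\frac{c+N}{r} = \frac{a+N}{q} = \frac{b-p+N}{p}$, so condition (ii) of Lemma~\ref{lm19} holds for every $\theta \in [0,1]$. I cover the range $\min\{p,q\} \leq r \leq \max\{p^*,q\}$ by two overlapping sub-cases whose union is this interval. If $p \leq r \leq p^*$, take $\theta = 1$ and $\delta = \frac{c}{r}$: condition (iii) is $r \geq p$, and $\delta \leq \frac{b}{p}$ reduces to $\frac{N}{r} \geq \frac{N-p}{p}$, i.e., $r \leq p^*$; this yields (\ref{38}). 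Otherwise $r$ lies in $[\min\{p,q\}, \max\{p,q\}]$, and I take $\theta := \frac{p(r-q)}{r(p-q)}$ when $p \neq q$ (respectively $\theta = 0$ when $p = q = r$). A short computation shows $\theta \in [0,1]$, and substitution gives $\frac{\theta r}{p} + \frac{(1-\theta)r}{q} = 1$, so (iii) holds with equality. Solving condition (i) for $\delta$ and using $\frac{a+N}{q} = \frac{b-p+N}{p}$ reduces $\delta \leq \frac{b}{p}$ to the trivially true $0 \leq 1$; this gives (\ref{39}).

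The hard work has already been done in Lemma~\ref{lm19}; what remains is bookkeeping. The main points to watch are: verifying that each proposed $\theta$ lies in $[0,1]$, confirming that the two sub-cases in part (ii) together exhaust the claimed range of $r$, and handling the corner cases $\theta_c \in \{0,1\}$ and $p = q$ without dividing by zero.
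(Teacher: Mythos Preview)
Your proof is correct and follows essentially the same route as the paper's: choose $\theta=\theta_c$ in part (i), then translate $\delta\le\frac{b}{p}$ into the inequality in (i-{\scriptsize 1}); in part (ii), split into the sub-cases $p\le r\le p^*$ (take $\theta=1$) and $\min\{p,q\}\le r\le\max\{p,q\}$ (take $\theta$ so that condition (iii) of Lemma~\ref{lm19} holds with equality, giving $\delta=\frac{b}{p}-1$). The paper carries out exactly these computations, including the observation that the two sub-cases together cover $[\min\{p,q\},\max\{p^*,q\}]$.
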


\begin{proof}
(i) Suppose $\frac{a+N}{q}\neq \frac{b-p+N}{p}.$ By (\ref{4}), condition
(ii) of Lemma \ref{lm19} holds if and only if $\theta =\theta _{c}.$ If $r=q$
and $c=c^{0}=a,$ so that $\theta _{c^{0}}=0,$ condition (i) of Lemma \ref
{lm19} holds with any $\delta .$ On the other hand, if $c\neq c^{0},$ then $%
\theta _{c}\in (0,1]$ and condition (i) of Lemma \ref{lm19} holds with $%
\delta =\frac{b-p+N}{p}+\frac{1-\theta _{c}}{\theta _{c}}\frac{N}{q}-\frac{1%
}{\theta _{c}}\frac{N}{r}.$ Hence, $\delta \leq \frac{b}{p}$ -as required in
Lemma \ref{lm19}- if and only if $\theta _{c}\left( \frac{1}{p}-\frac{1}{N}-%
\frac{1}{q}\right) \leq \frac{1}{r}-\frac{1}{q}.$ Thus, $W_{0}%
\hookrightarrow L^{r}(\Bbb{R}^{N};|x|^{c}dx)$ if also $\frac{\theta _{c}r}{p}
+\frac{(1-\theta _{c})r}{q}\geq 1.$

(ii) Suppose $\frac{a+N}{q}=\frac{b-p+N}{p}$ and let $c=c^{0}=c^{1}.$ Then,
condition (ii) of Lemma \ref{lm19} holds with any $\theta \in [0,1].$ Thus,
it only remains to show that if $\min \{p,q\}\leq r\leq \max \{p^{*},q\},$
then $\delta \leq \frac{b}{p}$ and $\theta \in [0,1]$ can be chosen such
that $\frac{c}{r}=\theta \delta +(1-\theta )\frac{a}{q}$ and that $\frac{
\theta r}{p}+\frac{(1-\theta )r}{q}\geq 1.$ If so, all the requirements of
Lemma \ref{lm19} are satisfied, whence $W_{0}\hookrightarrow L^{r}(\Bbb{R}
^{N};|x|^{c}dx).$

Observe that $\min \{p,q\}\leq r\leq \max \{p^{*},q\}$ if and only if either 
$p\leq r\leq p^{*}$ or $p\neq q$ and $\min \{p,q\}\leq r\leq \max \{p,q\}$
(possibly both). If $p\leq $ $r\leq p^{*},$ we may choose $\delta =\frac{c}{r%
}=\frac{b}{p}+\frac{N}{p}-\frac{N}{r}-1\leq \frac{b}{p}$ (since $r\leq p^{*}$%
) and $\theta =1,$ so that $\frac{\theta r}{p}+\frac{(1-\theta )r}{q}=\frac{r%
}{p}\geq 1.$ Then, (\ref{38}) follows from (\ref{34}).

If now $p\neq q$ and $\min \{p,q\}\leq r\leq \max \{p,q\},$ let $\theta $ be
defined by $\frac{1}{r}=\frac{\theta }{p}+\frac{1-\theta }{q},$ i.e., $%
\theta =\frac{p(q-r)}{r(q-p)}.$ Obviously, $\frac{\theta r}{p}+\frac{
(1-\theta )r}{q}=1,$ but it must be checked that $\frac{c}{r}=\theta \delta
+(1-\theta )\frac{a}{q}$ for some $\delta \leq \frac{b}{p}.$ Since $c=c^{0}$
and $\frac{a+N}{q}=\frac{b-p+N}{p},$ a straightforward verification shows
that $\frac{c}{r}=\theta \delta +(1-\theta )\frac{a}{q}$ with $\delta =\frac{%
b}{p}-1.$ Thus, (\ref{39}) follows from (\ref{34}). Of course, (\ref{39})
remains true with $\theta =0$ if $p=q=r.$
\end{proof}

\smallskip While Corollary \ref{cor20} gives sufficient conditions for $%
W_{0}\hookrightarrow L^{r}(\Bbb{R}^{N};|x|^{c}dx),$ necessary and sufficient
ones for $W_{rad}\hookrightarrow L^{r}(\Bbb{R}^{N};|x|^{c}dx)$ are listed in
Theorem \ref{th16}, where $W_{rad}$ is the subspace of radially symmetric
functions in $W_{\{a,b\}}^{1,(q,p)}(\Bbb{R}_{*}^{N}).$ Thus, $%
W_{\{a,b\}}^{1,(q,p)}(\Bbb{R}_{*}^{N})\hookrightarrow L^{r}(\Bbb{R}
^{N};|x|^{c}dx)$ can be inferred from the remark that $W_{\{a,b\}}^{1,(q,p)}(%
\Bbb{R}_{*}^{N})=W_{rad}\oplus W_{0}$ together with the following obvious
lemma:

\begin{lemma}
\label{lm21} Let $X$ and $Y$ be normed spaces and let $X_{1}$ and $X_{2}$ be
two subspaces of $X$ such that $X=X_{1}\oplus X_{2}$ (topological direct
sum). Then, $X\hookrightarrow Y$ if and only if $X_{i}\hookrightarrow
Y,i=1,2.$
\end{lemma}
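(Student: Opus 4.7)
The proof plan is short, since the statement truly is obvious once one unpacks what is meant by ``topological direct sum.'' The phrase means not merely that every $x\in X$ has a unique decomposition $x=x_{1}+x_{2}$ with $x_{i}\in X_{i}$, but that the corresponding projections $\pi_{i}:X\to X_{i}$ are continuous (equivalently, the map $(x_{1},x_{2})\mapsto x_{1}+x_{2}$ is a topological isomorphism from $X_{1}\times X_{2}$ onto $X$). This will be used without comment in the sufficiency direction.

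For the necessity direction (``only if''), I would simply remark that since $X_{1}$ and $X_{2}$ are subspaces of $X$, the inclusion maps $X_{i}\hookrightarrow X$ are continuous by definition (with norm $1$); composing with the assumed continuous embedding $X\hookrightarrow Y$ gives $X_{i}\hookrightarrow Y$ for $i=1,2$.

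For the sufficiency direction (``if''), given $x\in X$, decompose $x=\pi_{1}x+\pi_{2}x$ with $\pi_{i}x\in X_{i}$. Let $C_{i}$ denote the norm of the embedding $X_{i}\hookrightarrow Y$ and let $M_{i}:=\|\pi_{i}\|_{\mathcal{L}(X,X_{i})}$, both finite by hypothesis and by the topological direct-sum assumption respectively. Then the triangle inequality in $Y$ together with the two embedding estimates gives
\begin{equation*}
\|x\|_{Y}\;\leq\;\|\pi_{1}x\|_{Y}+\|\pi_{2}x\|_{Y}\;\leq\;C_{1}\|\pi_{1}x\|_{X_{1}}+C_{2}\|\pi_{2}x\|_{X_{2}}\;\leq\;\bigl(C_{1}M_{1}+C_{2}M_{2}\bigr)\|x\|_{X},
\end{equation*}
which is exactly $X\hookrightarrow Y$ with an explicit constant.

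There is no genuine obstacle; the only substantive content is the appeal to continuity of the projections, which is why the topological (as opposed to merely algebraic) direct-sum hypothesis is essential. A one-paragraph proof along these lines is all that is needed, and it is presumably why the authors label the lemma ``obvious.''
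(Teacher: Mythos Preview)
Your proof is correct and complete. The paper does not actually give a proof of this lemma at all; it merely introduces it as ``the following obvious lemma'' and moves on, so your argument is exactly the sort of routine unpacking the authors had in mind.
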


The relation $W_{\{a,b\}}^{1,(q,p)}(\Bbb{R}_{*}^{N})=W_{rad}\oplus W_{0}$
reflects the equality $u=u_{S}+(u-u_{S})$ with $u_{S}$ the radial
symmetrization of $u,$ that is, $u_{S}(x)=f_{u}(|x|)$ with $f_{u}$ given by (%
\ref{15}). Then, $u_{S}\in W_{rad}$ and $||u_{S}||_{\{a,b\},(q,p)}\leq
||u||_{\{a,b\},(q,p)}\leq ||u||_{a,q}+||\nabla u||_{b,p}$ by part (ii) of
Lemma \ref{lm7}, which proves the continuity of $u\mapsto u_{S}$ ($%
W_{\{a,b\}}^{1,(q,p)}(\Bbb{R}_{*}^{N})$ and $\widetilde{W}
_{\{a,b\}}^{1,(q,p)}$ contain the same radially symmetric functions and the
induced norms are the same). That $u-u_{S}\in W_{0}$ and $W_{rad}\cap
W_{0}=\{0\}$ is trivial.

The principle outlined above is simple, but it cannot always be implemented
in a straightforward way, primarily because the condition (i-{\scriptsize 2}%
) in Corollary \ref{cor20} is far from being necessary. The case when $%
r<\min \{p,q\}$ (Section \ref{embedding1}) is one, but not the only,
example. In practice, this means that Corollary \ref{cor20} alone does not
always suffice to prove that $W_{0}\hookrightarrow L^{r}(\Bbb{R}%
^{N};|x|^{c}dx)$ under optimal conditions about $c.$ Other arguments will be
needed, most notably Theorem \ref{th18} (but with other parameters); see the
proofs of Lemma \ref{lm23} and of Theorem \ref{th27}.

\section{Embedding theorem when $p<r\leq q$\label{embedding2}}

In this section, we discuss the embedding $W_{\{a,b\}}^{1,(q,p)}(\Bbb{R}
_{*}^{N})\hookrightarrow L^{r}(\Bbb{R}^{N};|x|^{c}dx)$ when $p<r\leq q.$
Together with Theorem \ref{th18} (when $1\leq r\leq \min \{p,q\}$), this
will settle the issue when $1\leq r\leq q.$

\begin{theorem}
\label{th22}Let $a,b,c\in \Bbb{R}$ and $1\leq p<r\leq q<\infty $ be given.
Then, $W_{\{a,b\}}^{1,(q,p)}(\Bbb{R}_{*}^{N})\hookrightarrow L^{r}(\Bbb{R}%
^{N};|x|^{c}dx)$ (and hence $W_{\{a,b\}}^{1,(q,p)}(\Bbb{R}%
_{*}^{N})\hookrightarrow W_{\{c,b\}}^{1,(r,p)}(\Bbb{R}_{*}^{N})$) in the
following cases: \newline
(i) $a$ and $b-p$ are on the same side of $-N$ (including $-N$), $\frac{a+N}{%
q}\neq \frac{b-p+N}{p},c$ is in the open interval with endpoints $c^{0}$ and 
$c^{1}$ and $\theta _{c}\left( \frac{1}{p}-\frac{1}{N}-\frac{1}{q}\right)
\leq \frac{1}{r}-\frac{1}{q}.$ \newline
(ii) $a$ and $b-p$ are strictly on opposite sides of $-N$ (hence $\frac{a+N}{%
q}\neq \frac{b-p+N}{p}$), $c$ is in the open interval with endpoints $c^{0}$
and $-N$ and $\theta _{c}\left( \frac{1}{p}-\frac{1}{N}-\frac{1}{q}\right)
\leq \frac{1}{r}-\frac{1}{q}.$ \newline
(iii) $r=q$ and $c=a.$ \newline
(iv) $r\leq p^{*},a\leq -N$ and $b-p<-N$ or $a\geq -N$ and $b-p>-N,c=c^{1}.$%
\newline
(v) $\frac{a+N}{q}=\frac{b-p+N}{p}\neq 0$ and $c=c^{1}$ ($=c^{0}$).
\end{theorem}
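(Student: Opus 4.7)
The plan is to prove each of the five cases via the topological direct-sum decomposition $W_{\{a,b\}}^{1,(q,p)}(\Bbb{R}_{*}^{N})=W_{\mathrm{rad}}\oplus W_{0}$, where $W_{\mathrm{rad}}$ is the subspace of radially symmetric functions and $W_{0}=\{u:u_{S}=0\}$; continuity of the projection $u\mapsto u_{S}$ follows from Lemma~\ref{lm7}(ii). By Lemma~\ref{lm21}, it then suffices to embed each summand into $L^{r}(\Bbb{R}^{N};|x|^{c}dx)$ separately.

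For $W_{\mathrm{rad}}$, Theorem~\ref{th16} supplies the conclusion in every case. Since $r\leq q$, the threshold $\breve{\theta}=(1-q/r)(q/p'+1)^{-1}$ is non-positive, so the condition $\theta_{c}\geq\breve{\theta}$ in parts (i)--(ii) of Theorem~\ref{th16} is automatic for any $\theta_{c}\in[0,1]$; parts (iii) and (iv) of that theorem cover cases (iv) and (v) here, and case (iii) is immediate.

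For $W_{0}$, the main tool is Corollary~\ref{cor20}. Cases (iii)--(v) are direct verifications: in case (iv), $\theta_{c}=1$, $r\leq p^{*}$, and $r\geq p$ together give (i-1) and (i-2) of Corollary~\ref{cor20}(i); in case (v), the range $\min\{p,q\}\leq r\leq\max\{p^{*},q\}$ puts us in Corollary~\ref{cor20}(ii). In cases (i) and (ii), the theorem's hypothesis is exactly Corollary~\ref{cor20}(i-1), but (i-2) becomes the genuine restriction $\theta_{c}\geq\theta^{*}:=p(q-r)/[r(q-p)]$, which is a real lower bound precisely when $r<q$. To cover the residual range $\theta_{c}<\theta^{*}$, I would apply Lemma~\ref{lm14} to $u$ with exponent $\xi_{0}:=1/\theta^{*}=r(q-p)/[p(q-r)]$. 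An elementary computation shows that the transformed function $|u|^{\xi_{0}}\in\widetilde{W}_{\{a,b_{\xi_{0}}\}}^{1,(q_{\xi_{0}},p_{\xi_{0}})}$ satisfies $r_{\xi_{0}}\leq\min\{p_{\xi_{0}},q_{\xi_{0}}\}$, while the new upper endpoint $c^{1,\xi_{0}}=(1-1/\xi_{0})c^{0}+(1/\xi_{0})c^{1}$ corresponds exactly to $\theta^{*}$, so $c$ lies strictly in the open interval with endpoints $c^{0}$ and $c^{1,\xi_{0}}$. Theorem~\ref{th18}(i) (or (ii) in case (ii)) then embeds $|u|^{\xi_{0}}$ into $L^{r_{\xi_{0}}}(\Bbb{R}^{N};|x|^{c}dx)$, and the identity $\||u|^{\xi_{0}}\|_{c,r_{\xi_{0}}}=\|u\|_{c,r}^{\xi_{0}}$ together with the norm bounds of Lemma~\ref{lm14} transfers the estimate back to $u$.

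The principal obstacle is the non-radial compatibility condition $\xi\leq\min\{q,q/p'+1\}$ demanded by Lemma~\ref{lm14}: the needed $\xi_{0}$ lies in this range generically but can fall out of it in parameter regimes where $p$ is close to $1$. I plan to close the resulting gap by a perturbation in the spirit of Case~(i-2) of the proof of Lemma~\ref{lm12}: replace $b$ by a nearby $\hat{b}$ for which $\xi_{0}$ is admissible, run the above argument to obtain the bound on $\zeta u$ or $(1-\zeta)u$ with the perturbed parameter, and then revert using that $\|\nabla(\zeta u)\|_{\hat{b},p}$ is controlled by $\|\nabla(\zeta u)\|_{b,p}$ on the relevant support. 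Since the two regimes $\theta_{c}\geq\theta^{*}$ (Corollary~\ref{cor20}) and $\theta_{c}<\theta^{*}$ (Lemma~\ref{lm14} plus Theorem~\ref{th18}) exactly tile the admissible $c$-interval, handling the compatibility condition is the only nontrivial step separating this decomposition scheme from a complete proof.
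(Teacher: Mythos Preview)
Your overall architecture---the splitting $W_{\{a,b\}}^{1,(q,p)}(\Bbb{R}_{*}^{N})=W_{\mathrm{rad}}\oplus W_{0}$, Theorem~\ref{th16} for $W_{\mathrm{rad}}$, and Corollary~\ref{cor20} for $W_{0}$---matches the paper exactly, and your treatment of cases (iii)--(v) is correct and essentially identical to the paper's. You also correctly locate the only genuine difficulty: for $r<q$ and $\theta_{c}<\theta^{*}=\frac{p(q-r)}{r(q-p)}$, condition (i-{\scriptsize 2}) of Corollary~\ref{cor20} fails and another argument is needed.

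The gap is in your proposed fix for that region. Your exponent $\xi_{0}=r(q-p)/[p(q-r)]$ depends only on $p,q,r$, not on $b$; perturbing $b$ therefore does nothing to the compatibility constraint $\xi_{0}\leq\min\{q,\,q/p'+1\}$ required by the Remark after Lemma~\ref{lm14}, so the ``perturbation in the spirit of Case~(i-{\scriptsize 2}) of Lemma~\ref{lm12}'' cannot close the gap as you describe. Moreover, Theorem~\ref{th18} applied to $|u|^{\xi_{0}}$ also requires $r_{\xi_{0}}=r/\xi_{0}\geq 1$, i.e.\ $\xi_{0}\leq r$; a short computation shows this is equivalent to $p(r-1)\leq q(p-1)$, which fails for instance when $p=1$ (and for many nearby choices). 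So the Lemma~\ref{lm14}/Theorem~\ref{th18} route, as formulated, is genuinely obstructed on a nonempty parameter range, and the suggested repair does not address the obstruction.

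The paper handles this region by a different and simpler device (its Lemma~\ref{lm23}): write $|x|^{c}|u|^{r}=\bigl(|x|^{\sigma/\gamma}|u|^{p}\bigr)^{\gamma}\bigl(|x|^{a}|u|^{q}\bigr)^{1-\gamma}$ with $\gamma=\frac{q-r}{q-p}\in(0,1)$ and $\sigma=c-a\frac{r-p}{q-p}$, apply H\"older's inequality, and then invoke Theorem~\ref{th18} with $r$ replaced by $p$ (which is always admissible since $p=\min\{p,q\}\geq 1$). This yields the \emph{full} embedding $W_{\{a,b\}}^{1,(q,p)}(\Bbb{R}_{*}^{N})\hookrightarrow L^{r}(\Bbb{R}^{N};|x|^{c}dx)$ for $c$ in an open subinterval with endpoint $c^{0}$; together with Corollary~\ref{cor20} at $c=\bar{c}$ and the elementary fact that $\{c:W_{0}\hookrightarrow L^{r}(\Bbb{R}^{N};|x|^{c}dx)\}$ is an interval, this covers all of $J$. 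Replacing your power-transform step by this H\"older splitting removes the compatibility issue entirely and completes the proof.
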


\subsection{Proof of parts (i) and (ii)}

In this subsection, we assume $\frac{a+N}{q}\neq \frac{b-p+N}{p}.$ Let $%
0\leq \bar{\theta}\leq 1$ denote the largest value of $\theta $ such that $%
\theta \left( \frac{1}{p}-\frac{1}{N}-\frac{1}{q}\right) \leq \frac{1}{r}-%
\frac{1}{q},$ that is, since $r\leq q$ is assumed, 
\begin{equation}
\bar{\theta}=\left\{ 
\begin{array}{l}
1\text{ if }r\leq p^{*},\text{ } \\ 
\left( \frac{1}{r}-\frac{1}{q}\right) \left( \frac{1}{p}-\frac{1}{N}-\frac{1%
}{q}\right) ^{-1}<1\text{ if }p<N\text{ and }r>p^{*}
\end{array}
\right. \text{ }  \label{40}
\end{equation}
and call $\bar{c}$ the corresponding value of $c,$ namely, 
\begin{equation}
\bar{c}:=\bar{\theta}c^{1}+(1-\bar{\theta})c^{0}  \label{41}
\end{equation}
(so that $\bar{\theta}=\theta _{\bar{c}};$ see (\ref{2})). Since $\frac{a+N}{
q}\neq \frac{b-p+N}{p},$ the points $c^{0}$ and $\bar{c}$ coincide if and
only if $\bar{\theta}=0,$ i.e., $r=q>p^{*},$ and then $\bar{c}=c^{0}=a$ by (%
\ref{1}).

\begin{lemma}
\label{lm23}If $\frac{a+N}{q}\neq \frac{b-p+N}{p}$ and $\bar{c}$ is given by
(\ref{40}) and (\ref{41}), the subspace $W_{0}$ of $W_{\{a,b\}}^{1,(q,p)}(%
\Bbb{R}_{*}^{N})$ in (\ref{33}) is continuously embedded into $L^{r}(\Bbb{R}%
^{N};|x|^{c}dx)$ for every $c$ in the interval $J$ with endpoints $\bar{c}$
(included) and $c^{0}$ (not included, unless $r=q$).
\end{lemma}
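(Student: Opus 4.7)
The strategy is to cover the interval $J$ of the claim by decomposing it according to the parameter $\theta _{c}=(c-c^{0})/(c^{1}-c^{0})$ and invoking different tools on each piece. Introduce the threshold
\begin{equation*}
\theta ^{*}:=\frac{p(q-r)}{r(q-p)}\text{ if }r<q,\qquad \theta ^{*}:=0\text{ if }r=q.
\end{equation*}
In the regime $p<r\leq q$ at hand, condition (i-1) of Corollary \ref{cor20} translates to $\theta _{c}\leq \bar{\theta}$ and (i-2) to $\theta _{c}\geq \theta ^{*}$, so Corollary \ref{cor20}(i) yields $W_{0}\hookrightarrow L^{r}(\Bbb{R}^{N};|x|^{c}dx)$ for every $c$ with $\theta _{c}\in [\theta ^{*},\bar{\theta}]$. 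This handles the portion of $J$ adjacent to $\bar{c}$. When $r=q$ one has $\theta ^{*}=0$, so this already exhausts $J$ (the endpoint $c^{0}=a$ being covered by the trivial sub-case of (i-1)); from now on $r<q$.

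For the complementary range $\theta _{c}\in (0,\theta ^{*})$, I would apply Theorem \ref{th18} with the reduced target exponent $r'=p$ (and unchanged $a,b,p,q$) to obtain $W_{\{a,b\}}^{1,(q,p)}(\Bbb{R}_{*}^{N})\hookrightarrow L^{p}(\Bbb{R}^{N};|x|^{c_{1}}dx)$ for every $c_{1}$ in the open interval $J^{(p)}$ furnished by that theorem. H\"older's inequality with $\alpha :=(q-r)/(q-p)$ and $\beta :=1-\alpha =(r-p)/(q-p)$ (so that $\alpha +\beta =1$ and $\alpha p+\beta q=r$), combined with the trivial $||u||_{a,q}\leq ||u||_{\{a,b\},(q,p)}$, then gives, upon setting $\tilde{c}:=\alpha c_{1}+\beta a$,
\begin{equation*}
||u||_{\tilde{c},r}^{r}\leq ||u||_{c_{1},p}^{\alpha p}||u||_{a,q}^{\beta q},\qquad \forall u\in W_{\{a,b\}}^{1,(q,p)}(\Bbb{R}_{*}^{N}).
\end{equation*}
Passing to the multiplicative form via Corollary \ref{cor2}, a direct computation from (\ref{4}) and the identity $\alpha p/r=\theta ^{*}$ shows $\theta _{\tilde{c}}=\theta ^{*}\theta _{c_{1}}$. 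When $a$ and $b-p$ are on the same side of $-N$ (the setting of Theorem \ref{th22}(i)), $c_{1}$ ranges over the full open interval with endpoints $p(a+N)/q-N$ and $b-p$, so $\theta _{c_{1}}\in (0,1)$ and $\theta _{\tilde{c}}$ sweeps all of $(0,\theta ^{*})$; together with the range supplied by Corollary \ref{cor20} this exhausts $J$ and completes the proof.

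The main obstacle arises when $a$ and $b-p$ are strictly on opposite sides of $-N$ (the setting of Theorem \ref{th22}(ii)): here $J^{(p)}$ reduces to the open interval with endpoints $p(a+N)/q-N$ and $-N$, so that $\theta _{c_{1}}\in (0,\theta _{-N})$ and the interpolation above yields only $\theta _{\tilde{c}}\in (0,\theta ^{*}\theta _{-N})$, leaving a residual range $[\theta ^{*}\theta _{-N},\theta ^{*})$ uncovered. To close this gap I would iterate using the lifting $u\mapsto |u|^{\xi }$ of Lemma \ref{lm14}, which the remark following that lemma shows is valid without radial symmetry for $1\leq \xi \leq \min \{q,q/p^{\prime }+1\}$. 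A direct computation gives the key identity $(b_{\xi }-p_{\xi }+N)/p_{\xi }=(\xi -1)(a+N)/q+(b-p+N)/p$, so as $\xi $ increases past a critical value $\xi ^{*}>1$ the lifted pivot $b_{\xi }-p_{\xi }+N$ becomes positive and the parameters $(a,b_{\xi },q_{\xi },p_{\xi })$ fall into the same-side setting of Theorem \ref{th18}(i). That theorem then delivers $|u|^{\xi }\in L^{p_{\xi }}(\Bbb{R}^{N};|x|^{c_{1}}dx)$, hence $u\in L^{\xi p_{\xi }}(\Bbb{R}^{N};|x|^{c_{1}}dx)$, on a much wider open interval of $c_{1}$. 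Reinterpolating with the trivial $L^{q}(\Bbb{R}^{N};|x|^{a}dx)$ embedding under the compatibility constraint $\xi p_{\xi }\leq r$ (equivalent to $\xi \leq 1/\theta ^{*}$) and tracking $\theta _{\tilde{c}}$ through (\ref{25}) and the second H\"older step should fill the residual range. The delicate step is to verify that the various admissibility constraints on $\xi $ remain simultaneously compatible throughout the gap.
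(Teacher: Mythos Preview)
Your argument in the same-side case is essentially the paper's: Corollary \ref{cor20} handles the $\bar c$ end and the H\"older/Theorem \ref{th18} interpolation handles the $c^{0}$ end, and together they cover the whole interval $J$. The difficulty you flag in the opposite-side case is genuine for the route you have chosen, but it is an artifact of trying to cover \emph{every} $c\in J$ directly.

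The paper sidesteps the gap with a one-line observation you are missing: the set
\[
\{c\in\Bbb{R}:W_{0}\hookrightarrow L^{r}(\Bbb{R}^{N};|x|^{c}dx)\}
\]
is automatically an interval (for any normed space of measurable functions in place of $W_{0}$: split $\int |x|^{c}|u|^{r}$ at $|x|=1$ and compare with the two endpoint weights). Hence it suffices to prove the embedding at $c=\bar c$ (Corollary \ref{cor20}, exactly as you do) and for $c$ \emph{arbitrarily close to} $c^{0}$. For the latter, the paper runs your H\"older/Theorem \ref{th18} argument only on a short one-sided neighborhood of $c^{0}$: as $c$ moves from $c^{0}$ toward $\tilde c$, the auxiliary exponent $d=\sigma/\gamma$ moves from $d^{0}=\frac{p(a+N)}{q}-N$ toward $b-p$, and Theorem \ref{th18}(i)\emph{ or }(ii) furnishes a nonempty open interval for $d$ starting at $d^{0}$ in \emph{both} configurations. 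That is all that is needed; no attempt is made to push $\theta_{c_{1}}$ all the way to $1$, so the obstruction $\theta_{c_{1}}<\theta_{-N}$ never bites.

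Your proposed fix via the $|u|^{\xi}$ lifting and iteration may be salvageable, but as written it is incomplete (you do not verify the simultaneous admissibility constraints on $\xi$) and, in light of the interval trick, unnecessary. Replacing your third paragraph with the interval observation gives a complete proof that is shorter than what you have now.
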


\begin{proof}
If $r=q,$ the embedding $W_{0}\hookrightarrow L^{r}(\Bbb{R}^{N};|x|^{c}dx)$
for $c\in J$ follows from part (i) of Corollary \ref{cor20} since $\theta
_{c}\left( \frac{1}{p}-\frac{1}{N}-\frac{1}{q}\right) \leq \frac{1}{r}-\frac{
1}{q}=0$ by definition of $J$ and $\frac{\theta _{c}r}{p}+(1-\theta
_{c})\geq 1$ irrespective of $\theta _{c}\in [0,1]$ since $r>p$ is assumed.

From now on, $r<q$ and $c^{0}\notin J.$ Observe that the set $\{c\in \Bbb{R}
:W_{0}\hookrightarrow L^{r}(\Bbb{R}^{N};|x|^{c}dx)\}$ is always an interval
(in this statement, $W_{0}$ may be replaced by any normed space of
measurable functions on $\Bbb{R}^{N}$). Thus, to prove that this interval
contains $J,$ it suffices to show that $W_{0}\hookrightarrow L^{r}(\Bbb{R}%
^{N};|x|^{c}dx)$ when $c=$ $\bar{c}$ and when $c\in J$ is arbitrarily close
to $c^{0}.$

The embedding $W_{0}\hookrightarrow L^{r}(\Bbb{R}^{N};|x|^{\bar{c}}dx)$
follows once again from part (i) of Corollary \ref{cor20} since $\bar{\theta}%
\left( \frac{1}{p}-\frac{1}{N}-\frac{1}{q}\right) \leq \frac{1}{r}-\frac{1}{q%
}$ by definition of $\bar{\theta}$ and $\frac{\bar{\theta}r}{p}+\frac{(1-%
\bar{\theta})r}{q}\geq 1$ by a simple calculation (obvious if $\bar{\theta}
=1;$ otherwise, use $p<N$ and $q>r>p^{*}$).

To complete the proof, assume that $c\in J$ is close to $c^{0},$ so that $%
\theta _{c}>0$ is small. If so, condition (i-{\scriptsize 2}) of Corollary 
\ref{cor20} fails when $r<q$ and this corollary cannot be used. Nonetheless,
we shall prove by another argument that $W_{\{a,b\}}^{1,(q,p)}(\Bbb{R}
_{*}^{N})\hookrightarrow L^{r}(\Bbb{R}^{N};|x|^{c}dx)$ in this case, a
stronger result than actually needed.

Define $\tilde{c}:=\frac{(b-p)(q-r)+a(r-p)}{q-p}$ and note that, by (\ref{4}%
), $\theta _{\tilde{c}}=\frac{p(q-r)}{r(q-p)}\in (0,1)$ (recall $p<r<q$), so
that $\tilde{c}\neq c^{0}.$ Both the open intervals with endpoints $c^{0}$
and $\tilde{c}\neq c^{0}$ or $\bar{c}\neq c^{0}$ consist of convex
combinations of $c^{0}$ and $c^{1}.$ Thus, they intersect along a nontrivial
open interval having $c^{0}$ as an endpoint. As a result, it suffices to
show that $W_{\{a,b\}}^{1,(q,p)}(\Bbb{R}_{*}^{N})\hookrightarrow L^{r}(\Bbb{R%
}^{N};|x|^{c}dx)$ for $c$ close enough to $c^{0}$ in the open interval $%
\widetilde{J}$ with endpoints $c^{0}$ and $\tilde{c}.$

Given any such $c,$ set $\sigma :=c-a\frac{r-p}{q-p}$ and $\gamma :=\frac{q-r%
}{q-p}\in (0,1).$ If $u\in W_{\{a,b\}}^{1,(q,p)}(\Bbb{R}_{*}^{N}),$ write $%
|x|^{c}|u|^{r}=|x|^{\sigma }|u|^{p\gamma }|x|^{c-\sigma }|u|^{r-p\gamma }$
and use H\"{o}lder's inequality to get 
\begin{equation}
\int_{\Bbb{R}^{N}}|x|^{c}|u|^{r}dx\leq \left( \int_{\Bbb{R}^{N}}|x|^{\frac{
\sigma }{\gamma }}|u|^{p}dx\right) ^{\gamma }\left( \int_{\Bbb{R}
^{N}}|x|^{a}|u|^{q}dx\right) ^{1-\gamma }.  \label{42}
\end{equation}

By parts (i) and (ii) of Theorem \ref{th18} with $c$ replaced by $d$ and $r$
replaced by $p$ (since $p=\min \{p,q\}$), there is a nonempty open interval $%
I$ with endpoint $d^{0}:=\frac{p(a+N)}{q}-N$ and second endpoint between $%
d^{0}$ and $d^{1}:=b-p$ (specifically, $b-p$ or $-N$), such that $%
W_{\{a,b\}}^{1,(q,p)}(\Bbb{R}_{*}^{N})\hookrightarrow L^{p}(\Bbb{R}%
^{N};|x|^{d}dx)$ when $d\in I.$

When $c$ is moved from $c^{0}$ to $\tilde{c},$ the point $d:=\frac{\sigma }{
\gamma }=\frac{c(q-p)-a(r-p)}{q-r}$ moves from $d^{0}$ to $b-p.$ Therefore, $%
d\in I$ for $c$ in some nonempty open subinterval $\widetilde{I}$ of $%
\widetilde{J}$ having $c^{0}$ as an endpoint. From the above, $%
W_{\{a,b\}}^{1,(q,p)}(\Bbb{R}_{*}^{N})\hookrightarrow L^{p}(\Bbb{R}%
^{N};|x|^{d}dx)$ when $c\in \widetilde{I}.$ By Corollary \ref{cor2}, this
embedding is accounted for by a multiplicative inequality of the type (\ref
{11}) (with $c$ replaced by $d$ and $r$ replaced by $p$), namely $%
||u||_{d,p}\leq C||\nabla u||_{b,p}^{\theta _{d}}||u||_{a,q}^{1-\theta _{d}}$
with $\theta _{d}:=\frac{d-d^{0}}{d^{1}-d^{0}}.$ Since $d=\frac{\sigma }{%
\gamma },$ the substitution into (\ref{42}) yields, when $c\in \widetilde{I}%
, $ the inequality $||u||_{c,r}\leq C||\nabla u||_{b,p}^{\nu
}||u||_{a,q}^{1-\nu }$ for $u\in W_{\{a,b\}}^{1,(q,p)}(\Bbb{R}_{*}^{N}),$
where $\nu =\frac{p\gamma \theta _{d}}{r}\in (0,1).$ In turn, this implies a
corresponding additive (i.e., embedding) inequality.
\end{proof}

\textit{Proof of part (i):} If $\bar{\theta}=0$ in (\ref{40}) (so that $%
r=q>p^{*}$), no $c$ in the open interval with endpoints $c^{0}=a$ and $c^{1}=%
\frac{q(b-p+N)}{p}-N$ satisfies $\theta _{c}\left( \frac{1}{p}-\frac{1}{N}-%
\frac{1}{q}\right) \leq \frac{1}{r}-\frac{1}{q}$ since $\theta _{c}>0$ and $%
\theta _{c}\leq \bar{\theta}=0$ are contradictory. Thus, there is nothing to
prove.

If $0<\bar{\theta}\leq 1,$ so that $\bar{c}\neq c^{0},$ Lemma \ref{lm23}
ensures that $W_{0}\hookrightarrow L^{r}(\Bbb{R}^{N};|x|^{c}dx)$ for $c$ in
the semi-open interval $J$ with endpoints $\bar{c}$ (included) and $c^{0}$
(not included, unless $r=q$). Meanwhile, by part (i) of Theorem \ref{th16}, $%
W_{rad}\hookrightarrow L^{r}(\Bbb{R}^{N};|x|^{c}dx)$ for $c$ in the open
interval with endpoints $c^{0}$ and $c^{1}$ (since $\breve{\theta}\leq 0$
when $r\leq q$). Thus, by Lemma \ref{lm21}, $W_{\{a,b\}}^{1,(q,p)}(\Bbb{R}
_{*}^{N})\hookrightarrow L^{r}(\Bbb{R}^{N};|x|^{c}dx)$ for $c$ in the
intersection of these two intervals. By definition of $\bar{\theta},$ this
intersection is the set of those $c$ in the open interval with endpoints $%
c^{0}$ and $c^{1}$ such that $\theta _{c}\left( \frac{1}{p}-\frac{1}{N}-%
\frac{1}{q}\right) \leq \frac{1}{r}-\frac{1}{q}.$

\textit{Proof of part (ii): }Once again, it is not restrictive to assume $0<%
\bar{\theta}\leq 1.$ By part (ii) of Theorem \ref{th16}, $%
W_{rad}\hookrightarrow L^{r}(\Bbb{R}^{N};|x|^{c}dx)$ for $c$ in the open
interval with endpoints $c^{0}$ and $-N$ (since $\breve{\theta}\leq 0$) and,
by Lemma \ref{lm23}, $W_{0}\hookrightarrow L^{r}(\Bbb{R}^{N};|x|^{c}dx)$ for 
$c$ in the semi-open interval with endpoints $c^{0}$ and $\bar{c}$ ($\neq
c^{0}$ since $\bar{\theta}>0$), including $\bar{c}$ but not $c^{0}.$ Hence,
by Lemma \ref{lm21}, $W_{\{a,b\}}^{1,(q,p)}(\Bbb{R}_{*}^{N})\hookrightarrow
L^{r}(\Bbb{R}^{N};|x|^{c}dx)$ for $c$ in the intersection of these two
intervals, which is the set of those $c$ in the open interval with endpoints 
$c^{0}$ and $-N$ such that $\theta _{c}\left( \frac{1}{p}-\frac{1}{N}-\frac{1%
}{q}\right) \leq \frac{1}{r}-\frac{1}{q}.$

\subsection{Proof of parts (iii), (iv) and (v)}

Since part (iii) is obvious, it only remains to prove (iv) and (v). If $%
\frac{a+N}{q}\neq \frac{b-p+N}{p}$ (so that $c^{1}\neq c^{0}$), the proof of
(iv) follows from Lemma \ref{lm21}, from part (iii) of Theorem \ref{th16}
and from part (i) of Corollary \ref{cor20} (recall $\theta _{c^{1}}=1$ and $%
p<r\leq p^{*}$). The use of part (ii) of Corollary \ref{cor20} instead of
part (i) yields (v), which in turn implies (iv) when $\frac{a+N}{q}=\frac{%
b-p+N}{p}.$

\section{Embedding theorem when $r>q\geq 1$ and $r\geq p$\label{embedding3}}

Throughout this section, we assume $r>q\geq 1$ and $r\geq p.$ If also ($p<N$
and) and $r>p^{*},$ it follows from Theorem \ref{th3} and part (i) of
Theorem \ref{th1} that $W_{\{a,b\}}^{1,(q,p)}(\Bbb{R}_{*}^{N})$ is not
continuously embedded into any $L^{r}(\Bbb{R}^{N};|x|^{c}dx).$ Thus, it is
not restrictive to confine attention to the case when $r\leq p^{*}.$

If $\frac{a+N}{q}\neq \frac{b-p+N}{p},$ the combination $r>q$ and $q<p^{*}$
(i.e., $\frac{1}{q}+\frac{1}{N}-\frac{1}{p}>0$) shows that the necessary
condition for the embedding $W_{\{a,b\}}^{1,(q,p)}(\Bbb{R}
_{*}^{N})\hookrightarrow L^{r}(\Bbb{R}^{N};|x|^{c}dx)$ given in part (i) of
Theorem \ref{th3} is $\theta _{c}\geq \bar{\theta}>0$ where 
\begin{equation}
\bar{\theta}=\left( \frac{1}{r}-\frac{1}{q}\right) \left( \frac{1}{p}-\frac{1%
}{N}-\frac{1}{q}\right) ^{-1}.  \label{43}
\end{equation}
This formula is the same as in (\ref{40}), but now $\bar{\theta}$ is the 
\emph{smallest} value of $\theta \in [0,1]$ such that $\theta \left( \frac{1%
}{p}-\frac{1}{N}-\frac{1}{q}\right) \leq \frac{1}{r}-\frac{1}{q}.$ Note that
indeed $\bar{\theta}\leq 1$ because $r\leq p^{*}$ (and $\bar{\theta}=1$ if
and only if $r=p^{*}$). Equivalently, $c$ must belong to the closed interval
with endpoints $\bar{c}:=\bar{\theta}c^{1}+(1-\bar{\theta})c^{0}$ (as in (%
\ref{41})) and $c^{1}.$

In addition, $p\leq r<\infty $ ensures that the subspace $W_{0}$ in (\ref{33}%
) is continuously embedded into $L^{r}(\Bbb{R}^{N};|x|^{c}dx)$ for $c$ in
the closed interval with endpoints $\bar{c}$ and $c^{1}.$ This follows from
part (i) of Corollary \ref{cor20} since $\frac{r\theta _{c}}{p}+\frac{%
r(1-\theta _{c})}{q}\geq 1$ irrespective of $\theta _{c}\in [0,1].$ We
record this result for future reference.

\begin{lemma}
\label{lm24}Let $a,b,c\in \Bbb{R}$ and $1\leq p,q,r<\infty ,$ be such that $%
r>q\geq 1,r\geq p$ and $\frac{a+N}{q}\neq \frac{b-p+N}{p}.$ If $\bar{c}$ is
given by (\ref{43}) and (\ref{41}), then $W_{0}\hookrightarrow L^{r}(\Bbb{R}%
^{N};|x|^{c}dx)$ for $c$ in the closed interval with endpoints $\bar{c}$ and 
$c^{1}.$
\end{lemma}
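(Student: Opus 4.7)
The plan is to simply verify that the hypotheses of Corollary \ref{cor20}(i) are satisfied for every $c$ in the closed interval with endpoints $\bar{c}$ and $c^{1}$, and invoke that corollary. There is no real obstacle; this lemma is essentially a direct reading of Corollary \ref{cor20}(i) under the standing assumptions of the section, and the point is merely to record it cleanly for later use.

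First I would observe that since $\frac{a+N}{q}\neq \frac{b-p+N}{p}$, the points $c^{0}$ and $c^{1}$ are distinct, so the parametrization $c \mapsto \theta_{c}$ in (\ref{2}) is a bijection between the closed interval with endpoints $c^{0}$ and $c^{1}$ and the interval $[0,1]$. In particular, as $c$ varies over the closed interval with endpoints $\bar{c}$ and $c^{1}$, the value $\theta_{c}$ varies over $[\bar{\theta},1]$. Since we also have $q < p^{*}$ (because $r > q$ and $r \leq p^{*}$ force $q < p^{*}$; otherwise there is nothing to prove, as discussed at the start of the section), the quantity $\frac{1}{p}-\frac{1}{N}-\frac{1}{q}$ is strictly negative. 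Combined with $\frac{1}{r}-\frac{1}{q}<0$ (since $r>q$), the definition (\ref{43}) gives $\bar{\theta} \in (0,1]$, so $\bar{c}\neq c^{0}$ and in particular $c\neq c^{0}$ throughout the interval under consideration.

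Next I would verify condition (i-\textsmaller{1}) of Corollary \ref{cor20}. By the very definition of $\bar\theta$ in (\ref{43}), the inequality $\theta\left(\frac{1}{p}-\frac{1}{N}-\frac{1}{q}\right)\le \frac{1}{r}-\frac{1}{q}$ holds precisely when $\theta\ge\bar\theta$ (both sides are negative and the coefficient on the left is negative). Since $\theta_{c}\ge\bar\theta$ on the interval in question, this condition is met.

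Finally I would verify condition (i-\textsmaller{2}), namely $\frac{\theta_{c}r}{p}+\frac{(1-\theta_{c})r}{q}\ge 1$. This is a convex combination of $\frac{r}{p}$ and $\frac{r}{q}$, both of which are $\ge 1$ by the standing hypotheses $r\ge p$ and $r> q\ge 1$. Hence the combination is $\ge 1$ for every $\theta_{c}\in[0,1]$. With both hypotheses verified, Corollary \ref{cor20}(i) yields $W_{0}\hookrightarrow L^{r}(\mathbb{R}^{N};|x|^{c}dx)$, completing the proof.
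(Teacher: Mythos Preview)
Your proof is correct and follows exactly the same approach as the paper: both verify the hypotheses of Corollary \ref{cor20}(i), noting that condition (i-{\scriptsize 2}) holds automatically since $r\geq p$ and $r>q$ make $\frac{r\theta_c}{p}+\frac{r(1-\theta_c)}{q}\geq 1$ for all $\theta_c\in[0,1]$, while condition (i-{\scriptsize 1}) holds by the very definition of $\bar\theta$. Your write-up is somewhat more explicit (recording that $c\neq c^{0}$ and that $q<p^{*}$), but substantively identical to the paper's argument recorded just before the statement of the lemma.
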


\begin{lemma}
\label{lm25}Let $a,b\in \Bbb{R}$ and $1\leq p,r<\infty ,1\leq q<r\leq p^{*}$
be such that $\frac{a+N}{q}\neq \frac{b-p+N}{p}.$ If $\breve{\theta}:=\left(
1-\frac{q}{r}\right) \left( \frac{q}{p^{\prime }}+1\right) ^{-1}$ and $\bar{%
\theta}$ is given by (\ref{43}), then $0<\breve{\theta}\leq \bar{\theta}.$
\end{lemma}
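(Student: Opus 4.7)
The plan is to verify both assertions by direct algebraic manipulation of the definitions; no analysis is needed here, only bookkeeping with the exponents.

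First I would dispose of the positivity claim $\breve{\theta}>0$ at once: since $r>q$ gives $1-q/r>0$ and $\frac{q}{p'}+1>0$ (with the standard convention $1/p'=0$ when $p=1$), the product is strictly positive.

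For the comparison $\breve{\theta}\leq \bar{\theta}$, I would first rewrite $\bar{\theta}$ with a positive numerator and denominator by multiplying top and bottom by $-1$:
\begin{equation*}
\bar{\theta}=\frac{\frac{1}{q}-\frac{1}{r}}{\frac{1}{q}-\frac{1}{p}+\frac{1}{N}},
\end{equation*}
which is legitimate because $r>q$ makes the numerator positive and $r\leq p^{*}$ gives $\frac{1}{r}\geq \frac{1}{p}-\frac{1}{N}$, hence $\frac{1}{q}-\frac{1}{p}+\frac{1}{N}>\frac{1}{q}-\frac{1}{r}>0$. Then I would compute the ratio $\breve{\theta}/\bar{\theta}$. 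The key cancellation is $1-q/r=q\bigl(\frac{1}{q}-\frac{1}{r}\bigr)$, which kills the numerator of $\bar{\theta}$ against one factor in the numerator of $\breve{\theta}$, leaving
\begin{equation*}
\frac{\breve{\theta}}{\bar{\theta}}=\frac{q\bigl(\frac{1}{q}-\frac{1}{p}+\frac{1}{N}\bigr)}{\frac{q}{p'}+1}=\frac{1-\frac{q}{p}+\frac{q}{N}}{1+q-\frac{q}{p}},
\end{equation*}
after substituting $\frac{q}{p'}=q-\frac{q}{p}$.

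Finally I would compare numerator and denominator: their difference is $\frac{q}{N}-q=q\bigl(\tfrac{1}{N}-1\bigr)\leq 0$ because $N\geq 1$. Since the denominator $1+q-\frac{q}{p}\geq 1>0$ (as $p\geq 1$ forces $q\geq q/p$), we conclude $\breve{\theta}/\bar{\theta}\leq 1$, i.e.\ $\breve{\theta}\leq \bar{\theta}$. The only mild subtlety is the $p=1$ case, where the formula $\frac{q}{p'}=q-\frac{q}{p}$ must be read as $0$; the resulting inequality still reduces to $q/N\leq q$, so no separate argument is required. There is no real obstacle in this lemma — it is a purely algebraic check whose role is to reconcile the $\breve{\theta}$ of Theorem~\ref{th16} with the $\bar{\theta}$ arising from Theorem~\ref{th3}.
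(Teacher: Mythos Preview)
Your proof is correct and is precisely the explicit calculation the paper alludes to (the paper's proof reads simply ``An explicit calculation (using $q<p^{*}$)''). You have supplied the details the paper omits, including the verification that $q<p^{*}$ makes the rewritten denominator $\tfrac{1}{q}-\tfrac{1}{p}+\tfrac{1}{N}$ positive, which is the one point the paper flags.
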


\begin{proof}
An explicit calculation (using $q<p^{*}$).
\end{proof}

\begin{theorem}
\label{th26} Let $a,b,c\in \Bbb{R}$ and $1\leq p,q,r<\infty $ be such that $%
1\leq q<r$ and $r\geq p.$ Then, $W_{\{a,b\}}^{1,(q,p)}(\Bbb{R}%
_{*}^{N})\hookrightarrow L^{r}(\Bbb{R}^{N};|x|^{c}dx)$ (and hence $%
W_{\{a,b\}}^{1,(q,p)}(\Bbb{R}_{*}^{N})\hookrightarrow W_{\{c,b\}}^{1,(r,p)}(%
\Bbb{R}_{*}^{N})$) in the following cases: \newline
(i) $a$ and $b-p$ are on the same side of $-N$ (including $-N$), $\frac{a+N}{%
q}\neq \frac{b-p+N}{p},c$ is in the open interval with endpoints $c^{0}$ and 
$c^{1}$ and $\theta _{c}\left( \frac{1}{p}-\frac{1}{N}-\frac{1}{q}\right)
\leq \frac{1}{r}-\frac{1}{q}.$ \newline
(ii) $a$ and $b-p$ are strictly on opposite sides of $-N$ (hence $\frac{a+N}{%
q}\neq \frac{b-p+N}{p}$), $c$ is in the open interval with endpoints $c^{0}$
and $-N$ and $\theta _{c}\left( \frac{1}{p}-\frac{1}{N}-\frac{1}{q}\right)
\leq \frac{1}{r}-\frac{1}{q}.$ \newline
(iii) $r\leq p^{*}$ and either $a\leq -N$ and $b-p<-N,$ or $a\geq -N$ and $%
b-p>-N,c=c^{1}.$\newline
(iv) $a=-N,b=p-N,q<r\leq p^{*}$ and $c=c^{0}$ ($=c^{1}=-N$).
\end{theorem}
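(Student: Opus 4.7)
The plan is to follow the strategy laid out in Section~\ref{CKNlemma}: use the topological direct sum decomposition $W_{\{a,b\}}^{1,(q,p)}(\mathbb{R}_*^N)=W_{rad}\oplus W_0$, where $W_{rad}$ is the subspace of radially symmetric functions and $W_0$ is defined in (\ref{33}), and invoke Lemma~\ref{lm21}. Thus it suffices to verify $W_{rad}\hookrightarrow L^r(\mathbb{R}^N;|x|^c dx)$ and $W_0\hookrightarrow L^r(\mathbb{R}^N;|x|^c dx)$ separately under the hypotheses of each part. Before doing so, I would note that the case $r>p^*$ is already ruled out by Theorem~\ref{th3}(i) and Theorem~\ref{th1}(i), so one may assume $r\leq p^*$ throughout.

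For the embedding of $W_{rad}$, Theorem~\ref{th16} is essentially tailor-made. Parts (i) and (ii) of Theorem~\ref{th26} correspond to parts (i) and (ii) of Theorem~\ref{th16}; part (iii) of Theorem~\ref{th26} matches Theorem~\ref{th16}(iii), whose hypothesis $r\geq p$ is granted; and part (iv) matches Theorem~\ref{th16}(v), whose hypothesis $r>q$ is granted. The only subtlety is that Theorem~\ref{th16}(i)(ii) requires $\theta_c\geq\breve{\theta}$, whereas parts (i)(ii) of Theorem~\ref{th26} hypothesize the apparently different inequality $\theta_c(1/p-1/N-1/q)\leq 1/r-1/q$. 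However, since $r>q$ and $q<p^*$ (the latter because $q<r\leq p^*$), that hypothesis is equivalent to $\theta_c\geq\bar{\theta}$ with $\bar{\theta}$ as in (\ref{43}), and Lemma~\ref{lm25} then yields $\theta_c\geq\bar{\theta}\geq\breve{\theta}$.

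For the embedding of $W_0$, when $\frac{a+N}{q}\neq\frac{b-p+N}{p}$ (cases (i), (ii), and the subcase of (iii) in which $c^0\neq c^1$), Lemma~\ref{lm24} directly provides the continuous embedding for $c$ in the closed interval with endpoints $\bar c=\bar\theta c^1+(1-\bar\theta)c^0$ and $c^1$, which is exactly the locus $\theta_c\geq\bar\theta$. When $\frac{a+N}{q}=\frac{b-p+N}{p}$ (the remaining part of case (iii), together with case (iv) where $a=b-p=-N$), one has $c^0=c^1$ and the needed embedding follows from part (ii) of Corollary~\ref{cor20}; its assumption $\min\{p,q\}\leq r\leq\max\{p^*,q\}$ is verified because $p\leq r\leq p^*$ under (iii) and $q<r\leq p^*$ under (iv).

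Putting the pieces together, the main conceptual point is that the $c$-range for the $W_0$ embedding (the set $\theta_c\geq\bar\theta$) is contained in the $c$-range for the $W_{rad}$ embedding (the set $\theta_c\geq\breve\theta$), thanks to Lemma~\ref{lm25}. Hence the two admissible ranges match exactly the one claimed in Theorem~\ref{th26}, and Lemma~\ref{lm21} finishes the proof. The main obstacle to the argument is precisely this interval comparison, but it is an explicit algebraic inequality reducing to $1-q/r\leq (1/r-1/q)(1/p-1/N-1/q)^{-1}(q/p'+1)$ under $q<p^*$, which is already handled by Lemma~\ref{lm25}; the rest is bookkeeping.
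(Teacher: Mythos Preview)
Your proof is correct and follows essentially the same route as the paper: split $W_{\{a,b\}}^{1,(q,p)}(\mathbb{R}_*^N)=W_{rad}\oplus W_0$, handle $W_{rad}$ via Theorem~\ref{th16} (parts (i), (ii), (iii), (v) for the respective cases) and $W_0$ via Lemma~\ref{lm24} or Corollary~\ref{cor20}(ii), with Lemma~\ref{lm25} supplying the key comparison $\breve{\theta}\leq\bar{\theta}$. One small point: your appeal to Theorems~\ref{th1} and~\ref{th3} to ``rule out'' $r>p^*$ is logically backwards, since those are necessity results and you are proving sufficiency; the clean argument (which the paper gives) is simply that when $r>\max\{p^*,q\}$ the hypothesis $\theta_c(1/p-1/N-1/q)\leq 1/r-1/q$ is vacuous, so there is nothing to prove in (i)--(ii), while (iii)--(iv) assume $r\leq p^*$ explicitly.
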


\begin{proof}
(i) If $r>\max \{q,p^{*}\},$ the condition $\theta _{c}\left( \frac{1}{p}-%
\frac{1}{N}-\frac{1}{q}\right) \leq \frac{1}{r}-\frac{1}{q}$ never holds and
there is nothing to prove. Accordingly, assume $r\leq p^{*},$ so that $\bar{
\theta}\leq 1.$ Since $\breve{\theta}\leq \bar{\theta},$ it follows from
part (i) of Theorem \ref{th16} that $W_{rad}\hookrightarrow L^{r}(\Bbb{R}
^{N};|x|^{c}dx)$ for every $c$ in the semi-open interval $J$ with endpoints $%
\bar{c}$ (included) and $c^{1}$ (not included). Therefore, by Lemma \ref
{lm24} and Lemma \ref{lm21}, $W_{\{a,b\}}^{1,(q,p)}(\Bbb{R}
_{*}^{N})\hookrightarrow L^{r}(\Bbb{R}^{N};|x|^{c}dx)$ for $c\in J.$ By
definition of $\bar{\theta},$ it is plain that $J$ consists of those $c$ in
the open interval with endpoints $c^{0}$ and $c^{1}$ such that $\theta
_{c}\left( \frac{1}{p}-\frac{1}{N}-\frac{1}{q}\right) \leq \frac{1}{r}-\frac{%
1}{q}.$ \newline

(ii) As in (i), it is not restrictive to assume $r\leq p^{*}.$ Then, $\breve{
\theta}\leq \bar{\theta}$ by Lemma \ref{lm25} while $\theta _{c}\geq \bar{
\theta}$ for every $c$ satisfying the specified conditions. Thus, the result
follows from part (ii) of Theorem \ref{th16}, Lemma \ref{lm24} and Lemma \ref
{lm21}.

(iii) Use part (ii) of Corollary \ref{cor20}, part (iii) of Theorem \ref
{th16} and Lemma \ref{lm21}.

(iv) Use part (ii) of Corollary \ref{cor20}, part (v) of Theorem \ref{th16}
and Lemma \ref{lm21}.
\end{proof}

\section{Embedding theorem when $1\leq q<r<p$\label{embedding4}}

If $q<r<p,$ then $r<p^{*}.$ Thus, as in the previous section, $\bar{\theta}$
in (\ref{43}) is the smallest $\theta \in [0,1]$ such that $\theta \left( 
\frac{1}{p}-\frac{1}{N}-\frac{1}{q}\right) \leq \frac{1}{r}-\frac{1}{q}.$
Clearly, $\bar{\theta}\in (0,1).$

\begin{theorem}
\label{th27}Let $a,b,c\in \Bbb{R}$ and $1\leq q<r<p<\infty $ be given. Then, 
$W_{\{a,b\}}^{1,(q,p)}(\Bbb{R}_{*}^{N})\hookrightarrow L^{r}(\Bbb{R}%
^{N};|x|^{c}dx)$ (and hence $W_{\{a,b\}}^{1,(q,p)}(\Bbb{R}%
_{*}^{N})\hookrightarrow W_{\{c,b\}}^{1,(r,p)}(\Bbb{R}_{*}^{N})$) in the
following cases: \newline
(i) $a$ and $b-p$ are on the same side of $-N$ (including $-N$), $\frac{a+N}{%
q}\neq \frac{b-p+N}{p},c$ is in the open interval with endpoints $c^{0}$ and 
$c^{1}$ and $\theta _{c}\left( \frac{1}{p}-\frac{1}{N}-\frac{1}{q}\right)
\leq \frac{1}{r}-\frac{1}{q}.$ \newline
(ii) $a$ and $b-p$ are strictly on opposite sides of $-N$ (hence $\frac{a+N}{%
q}\neq \frac{b-p+N}{p}$), $c$ is in the open interval with endpoints $c^{0}$
and $-N$ and $\theta _{c}\left( \frac{1}{p}-\frac{1}{N}-\frac{1}{q}\right)
\leq \frac{1}{r}-\frac{1}{q}.$ \newline
(iii) $\frac{a+N}{q}=\frac{b-p+N}{p}\neq 0$ and $c=c^{0}=c^{1}.$\newline
(iv) $a=-N,b=p-N$ and $c=c^{0}$ ($=c^{1}=-N$).
\end{theorem}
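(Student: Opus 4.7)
Parts (iii) and (iv) are the direct cases. Using Lemma \ref{lm21}, I decompose $W_{\{a,b\}}^{1,(q,p)}(\Bbb{R}_{*}^{N})=W_{rad}\oplus W_{0}$ and handle each summand separately. The radial summand is embedded by Theorem \ref{th16}(iv) for part (iii) (since $p\ne q$ and $q<r<p$ places $r$ in $[\min\{p,q\},\max\{p,q\}]$) or by Theorem \ref{th16}(v) for part (iv); the mean-zero summand $W_{0}$ by Corollary \ref{cor20}(ii), whose hypothesis $\min\{p,q\}\le r\le\max\{p^{*},q\}$ reads $q\le r<p\le p^{*}$ here, and so holds.

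For the main parts (i) and (ii), my approach bypasses the $W_{rad}\oplus W_{0}$ decomposition in favour of a direct H\"{o}lder interpolation on the whole space. Set $\alpha:=(r-q)/(p-q)\in(0,1)$, so that $r=\alpha p+(1-\alpha)q$. For any reals $d,e$ with $c=\alpha d+(1-\alpha)e$, H\"{o}lder's inequality with exponents $1/\alpha$ and $1/(1-\alpha)$ yields
\begin{equation*}
\int_{\Bbb{R}^{N}}|x|^{c}|u|^{r}\,dx\;\le\;\|u\|_{d,p}^{\alpha p}\,\|u\|_{e,q}^{(1-\alpha)q},\qquad u\in W_{\{a,b\}}^{1,(q,p)}(\Bbb{R}_{*}^{N}).
\end{equation*}
The task then reduces to producing, for each admissible $c$, a pair $(d,e)$ for which both intermediate embeddings $W_{\{a,b\}}^{1,(q,p)}(\Bbb{R}_{*}^{N})\hookrightarrow L^{p}(\Bbb{R}^{N};|x|^{d}dx)$ and $W_{\{a,b\}}^{1,(q,p)}(\Bbb{R}_{*}^{N})\hookrightarrow L^{q}(\Bbb{R}^{N};|x|^{e}dx)$ hold. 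The first is delivered by the already-proved Theorem \ref{th26} (applicable because its target $p$ satisfies $p>q$), and the second by Theorem \ref{th18} (applicable since its target $q\le\min\{p,q\}=q$), supplemented by the trivial endpoint $e=a$ from Theorem \ref{th18}(iii). For part (ii) one uses the opposite-sides-of-$-N$ variants Theorems \ref{th26}(ii) and \ref{th18}(ii), with $-N$ replacing the relevant $c^{1}$-endpoint; Remark \ref{rm2} (Kelvin transform) can be used to reduce to one of the two orientations.

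The principal obstacle is the endpoint bookkeeping. Letting $I_{p},I_{q}$ denote the admissible semi-open intervals of $d,e$ from Theorems \ref{th26} and \ref{th18} respectively, I must verify that the Minkowski sum $\alpha I_{p}+(1-\alpha)I_{q}$ coincides precisely with the admissible $c$-interval of Theorem \ref{th27} --- namely, the semi-open interval with endpoint $\bar{c}:=\bar{\theta}c^{1}+(1-\bar{\theta})c^{0}$ included and $c^{1}$ (or $-N$ in part (ii)) excluded, where $\bar{\theta}$ is the smallest $\theta\in[0,1]$ satisfying $\theta(1/p-1/N-1/q)\le 1/r-1/q$. This reduces to two identities: the $c^{1}$ defined for target $r$ equals the $\alpha$-weighted combination of the analogous $c^{1}$-endpoints for targets $p$ and $q$, and likewise for the $\bar{c}$-endpoints. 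Both follow from the relation $r=\alpha p+(1-\alpha)q$, but the alignment of the two $\bar{\theta}$-thresholds (one computed with target $r$, the other with target $p$) and the tracking of semi-open/open conventions through the Minkowski sum demand careful arithmetic.
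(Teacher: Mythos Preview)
Your treatment of parts (iii) and (iv) is correct and matches the paper exactly. Your approach to part (i) is also correct and is a pleasant alternative to the paper's argument: instead of first handling a closed subinterval $K=[\bar c,\tilde c]$ via the $W_{rad}\oplus W_0$ splitting and then extending by Theorem~\ref{th18} with new parameters $(\tilde c,r)$, you interpolate directly between the already-proved $L^{p}$ and $L^{q}$ embeddings. The key identities check out: writing $\theta_c=\mu\theta_d+(1-\mu)\theta_e$ with $\mu=\alpha p/r=p(r-q)/(r(p-q))$, one has $\mu\,\bar\theta_p=\bar\theta$ and the Minkowski sum $\alpha I_p+(1-\alpha)I_q$ is exactly the semi-open interval with endpoints $\bar c$ (included) and $c^{1}$ (excluded).

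Part (ii), however, has a real gap. The endpoint identity $\mu\,\bar\theta_p=\bar\theta$ still holds, but because $\mu<1$ it forces $\bar\theta_p>\bar\theta$. Now $\theta_{-N}$ is the \emph{same} number whether computed with target $r$, $p$, or $q$ (it equals $\frac{(a+N)/q}{(a+N)/q-(b-p+N)/p}$, independent of the target exponent). Hence whenever $\bar\theta<\theta_{-N}\le\bar\theta_p$, the admissible $d$-interval from Theorem~\ref{th26}(ii) is \emph{empty} (no $d$ between $c^{0}_p$ and $-N$ has $\theta_d\ge\bar\theta_p$), so your Minkowski sum is empty, yet the target $c$-interval of Theorem~\ref{th27}(ii) is not. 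A concrete instance: $N=2$, $q=1$, $r=2$, $p=3$, $a=0$, $b=-5$; then $\bar\theta=3/7$, $\bar\theta_p=4/7$, $\theta_{-N}=1/2$, and your method produces no admissible $c$, whereas the theorem asserts the embedding on the nonempty interval $(-2,-10/7]$. The paper avoids this by working with $W_0$ directly via Corollary~\ref{cor20}(i), which carries no $-N$ restriction, and only afterwards intersecting with the radial interval from Theorem~\ref{th16}(ii); when that intersection stops short of $-N$, a further application of Theorem~\ref{th18}(ii) (with $a,q$ replaced by $\tilde c,r$) extends the range. Your H\"older interpolation cannot replace this step, because among already-proved embeddings the split at $(p,q)$ is optimal and still insufficient.
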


\begin{proof}
(i) As in the proof of Lemma \ref{lm23}, let $\tilde{c}:=\frac{
(b-p)(q-r)+a(r-p)}{q-p},$ so that, by (\ref{2}), $\tilde{\theta}:=\theta _{%
\tilde{c}}=$ $\frac{p(q-r)}{r(q-p)}\in (0,1).$ If $c$ is in the semi-open
interval with endpoints $c^{0}$ (not included) and $\tilde{c}$ (included),
then $0<\theta _{c}\leq \tilde{\theta}.$ A routine verification reveals that
condition (i-{\scriptsize 2}) of Corollary \ref{cor20} holds but condition
(i-{\scriptsize 1}) holds if and only if $\theta _{c}\geq \bar{\theta}$ and $%
\tilde{\theta}>\bar{\theta}$ by another simple verification. Thus, by
Corollary \ref{cor20}, $W_{0}\hookrightarrow L^{r}(\Bbb{R}^{N};|x|^{c}dx)$
if $c$ is in the closed interval $K$ with endpoints $\bar{c}$ in (\ref{41})
and $\tilde{c}.$

By part (i) of Theorem \ref{th16}, $W_{rad}\hookrightarrow L^{r}(\Bbb{R}
^{N};|x|^{c}dx)$ if $c$ is in the semi-open interval with endpoints $\breve{c%
}:=\breve{\theta}c^{1}+(1-\breve{\theta})c^{0}$ (included) and $c^{1}$ (not
included) and, by Lemma \ref{lm25}, this interval contains $K.$ Thus, by
Lemma \ref{lm21}, it follows that $W_{\{a,b\}}^{1,(q,p)}(\Bbb{R}
_{*}^{N})\hookrightarrow L^{r}(\Bbb{R}^{N};|x|^{c}dx)$ when $c\in K.$

This is not yet the desired result, but $W_{\{a,b\}}^{1,(q,p)}(\Bbb{R}%
_{*}^{N})\hookrightarrow L^{r}(\Bbb{R}^{N};|x|^{\tilde{c}}dx)$ since $\tilde{%
c}\in K,$ so that $W_{\{a,b\}}^{1,(q,p)}(\Bbb{R}_{*}^{N})\hookrightarrow
W_{\{\tilde{c},b\}}^{1,(r,p)}(\Bbb{R}_{*}^{N}).$ Now, $\frac{\tilde{c}+N}{r}%
\neq \frac{b-p+N}{p}$ (because $\tilde{\theta}<1$) and $\tilde{c}$ and $b-p$
are on the same side of $-N$ (because the same thing is true of $a$ and $b-p$%
). Therefore, by part (i) of Theorem \ref{th18} with $a$ and $q$ replaced by 
$\tilde{c}$ and $r,$ respectively (use $r=\min \{p,r\}$), $W_{\{\tilde{c}
,b\}}^{1,(r,p)}(\Bbb{R}_{*}^{N})\hookrightarrow L^{r}(\Bbb{R}^{N};|x|^{c}dx)$
for $c$ in the open interval with endpoints $\tilde{c}$ and $c^{1}.$

Altogether, $W_{\{a,b\}}^{1,(q,p)}(\Bbb{R}_{*}^{N})\hookrightarrow L^{r}(%
\Bbb{R}^{N};|x|^{c}dx)$ for $c$ in the union of $K$ with the open interval
with endpoints $\tilde{c}$ and $c^{1},$ that is, the semi-open interval with
endpoints $\bar{c}$ and $c^{1}.$ By definition of $\bar{\theta},$ this
interval is the set of those $c$ in the open interval with endpoints $c^{0}$
and $c^{1}$ such that $\theta _{c}\left( \frac{1}{p}-\frac{1}{N}-\frac{1}{q}
\right) \leq \frac{1}{r}-\frac{1}{q}.$ \newline

(ii) If $\theta _{-N}\leq \bar{\theta},$ there is nothing to prove since no $%
c$ satisfies the required conditions. Suppose then $\theta _{-N}>\bar{\theta}
.$ As in the proof of (i) above, $W_{0}\hookrightarrow L^{r}(\Bbb{R}
^{N};|x|^{c}dx)$ if $c$ is in the (nontrivial) closed interval $K$ with
endpoints $\bar{c}$ and $\tilde{c}.$ On the other hand, since $\breve{\theta}%
\leq \bar{\theta}$ by Lemma \ref{lm25} and $\bar{\theta}<\theta _{-N},$ it
follows from part (ii) of Theorem \ref{th16} that $W_{rad}\hookrightarrow
L^{r}(\Bbb{R}^{N};|x|^{c}dx)$ if $c$ is in the semi-open interval $\breve{J}$
with endpoints $\breve{c}:=\breve{\theta}c^{1}+(1-\breve{\theta})c^{0}$
(included) and $-N$ (not included). Therefore, by Lemma \ref{lm21}, $%
W_{\{a,b\}}^{1,(q,p)}(\Bbb{R}_{*}^{N})\hookrightarrow L^{r}(\Bbb{R}
^{N};|x|^{c}dx)$ when $c\in K\cap \breve{J}.$

Since $\breve{\theta}\leq \bar{\theta}$ $<\theta _{-N},$ it follows that $%
\bar{c}\in K\cap \breve{J}$ is an endpoint of $K\cap \breve{J}.$ Since also $%
\bar{\theta}<\tilde{\theta},$ the second endpoint can only be $-N$ or $%
\tilde{c}.$ If $\theta _{-N}\leq \tilde{\theta},$ then $K\cap \breve{J}$ is
the semi-open interval with endpoints $\bar{c}$ (included) and $-N$ (not
included). If $\theta _{-N}>\tilde{\theta},$ then $K\cap \breve{J}$ is the
closed interval with endpoints $\bar{c}$ and $\tilde{c}.$ Yet, once again, $%
W_{\{a,b\}}^{1,(q,p)}(\Bbb{R}_{*}^{N})\hookrightarrow L^{r}(\Bbb{R}
^{N};|x|^{c}dx)$ when $c$ is in the semi-open interval with endpoints $\bar{c%
}$ (included) and $-N$ (not included), as shown below. This proves the
desired result since, by definition of $\bar{\theta},$ this interval
consists of those $c$ in the open interval with endpoints $c^{0}$ and $-N$
such that $\theta _{c}\left( \frac{1}{p}-\frac{1}{N}-\frac{1}{q}\right) \leq 
\frac{1}{r}-\frac{1}{q}.$

To complete the proof, note that, by (\ref{4}), $\theta _{-N}>\tilde{\theta}$
implies that $\frac{\tilde{c}+N}{r}$ and $\frac{a+N}{q},$ and hence also $%
\tilde{c}+N$ and $a+N,$ have the same (nonzero) sign, so that $\tilde{c}$
and $b-p$ are strictly on opposite sides of $-N.$ As in the proof of (i)
above, but now by part (ii) of Theorem \ref{th18} with $a$ and $q$ replaced
by $\tilde{c}$ and $r,$ respectively, it follows that $W_{\{a,b\}}^{1,(q,p)}(%
\Bbb{R}_{*}^{N})\hookrightarrow L^{r}(\Bbb{R}^{N};|x|^{c}dx)$ when $c$ is in
the union of the closed interval with endpoints $\bar{c}$ and $\tilde{c}$
with the open interval with endpoints $\tilde{c}$ and $-N,$ that is, the
semi-open interval with endpoints $\bar{c}$ (included) and $-N$ (not
included), as claimed.

(iii) Use part (iv) of Theorem \ref{th16}, part (ii) of Corollary \ref{cor20}
and Lemma \ref{lm21}.

(iv) The argument is the same as in the proof of part (iv) of Theorem \ref
{th26}.
\end{proof}

\section{Generalized CKN inequalities\label{CKNinequalities}}

If $W_{\{a,b\}}^{1,(q,p)}(\Bbb{R}_{*}^{N})\hookrightarrow L^{r}(\Bbb{R}%
^{N};|x|^{c}dx),$ then $r\leq \max \{p^{*},q\}$ by Theorem \ref{th3} and $c$
is in the closed interval with endpoints $c^{0}$ and $c^{1}$ by part (i) of
Theorem \ref{th1}. If, in addition $\frac{a+N}{q}\neq \frac{b-p+N}{p},$ it
was shown in Corollary \ref{cor2} that the embedding is accounted for by the
multiplicative inequality 
\begin{equation}
||u||_{c,r}\leq C||\nabla u||_{b,p}^{\theta _{c}}||u||_{a,q}^{1-\theta _{c}},
\label{46}
\end{equation}
with $\theta _{c}$ given by (\ref{2}). When $a,b,c>-N$ and $u\in
C_{0}^{\infty }(\Bbb{R}^{N}),$ such inequalities coincide with some of the
CKN inequalities proved in \cite{CaKoNi84}.

With no \textit{a priori} limitation about $a,b$ and $c,$ but when $%
p=q=r=2,c=\frac{a+b}{2}-1$ -so that $\theta _{c}=\frac{1}{2}$- and $u\in
C_{0}^{\infty }(\Bbb{R}_{*}^{N}),$ (\ref{46}) was recently obtained, by
variational methods, by Catrina and Costa \cite{CaCo09} (see also \cite{Co08}%
), with best constant $C.$ This does not imply (\ref{46}) for $u\in
W_{\{a,b\}}^{1,(2,2)}(\Bbb{R}_{*}^{N}),$ or that the best constant is the
same; see Subsection \ref{ex3}.

The CKN inequalities also incorporate the limiting case $\frac{a+N}{q}=\frac{%
b-p+N}{p}$ (when $\theta _{c}$ in (\ref{2}) is not defined). It is therefore
natural to ask whether the embedding $W_{\{a,b\}}^{1,(q,p)}(\Bbb{R}%
_{*}^{N})\hookrightarrow L^{r}(\Bbb{R}^{N};|x|^{c}dx)$ can be characterized
by similar multiplicative inequalities when $\frac{a+N}{q}=\frac{b-p+N}{p},$
so that $c=c^{0}$ ($=c^{1}$) is the only possible value.

The next lemma is, roughly speaking, a ``multiplicative'' analog of Lemma 
\ref{lm21}. Recall the definition (\ref{33}) of the subspace $W_{0}$ as well
as the shorthand $W_{rad}$ for the subspace of radially symmetric functions
of $W_{\{a,b\}}^{1,(q,p)}(\Bbb{R}_{*}^{N}).$

\begin{lemma}
\label{lm28}Let $a,b,c\in \Bbb{R}$ and $1\leq p,q,r<\infty $ be given. If
there is $\theta \in [0,1]$ such that $||u||_{c,r}\leq C||\nabla
u||_{b,p}^{\theta }||u||_{a,q}^{1-\theta }$ for every $u\in W_{rad}$ and
every $u\in W_{0},$ then $||u||_{c,r}\leq C||\nabla u||_{b,p}^{\theta
}||u||_{a,q}^{1-\theta }$ for every $u\in W_{\{a,b\}}^{1,(q,p)}(\Bbb{R}
_{*}^{N})$ after modifying $C.$
\end{lemma}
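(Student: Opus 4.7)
The plan is to exploit the direct sum decomposition $W_{\{a,b\}}^{1,(q,p)}(\mathbb{R}_*^N)=W_{rad}\oplus W_0$ that was already highlighted in the paragraph following Lemma~\ref{lm21}. Given $u\in W_{\{a,b\}}^{1,(q,p)}(\mathbb{R}_*^N)$, I would write $u=u_S+(u-u_S)$, where $u_S$ is the radial symmetrization (so $u_S\in W_{rad}$) and $u-u_S\in W_0$ (since $(u-u_S)_S=u_S-u_S=0$).

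First, I would record the key estimates on the two pieces in the source norms. By Lemma~\ref{lm7}(ii), $\|u_S\|_{a,q}\le\|u\|_{a,q}$ and $\|\partial_\rho u_S\|_{b,p}\le\|\partial_\rho u\|_{b,p}$. Since $u_S$ is radially symmetric, $\nabla u_S=(\partial_\rho u_S)\,x/|x|$, so $\|\nabla u_S\|_{b,p}=\|\partial_\rho u_S\|_{b,p}\le\|\partial_\rho u\|_{b,p}\le\|\nabla u\|_{b,p}$. A triangle inequality then yields $\|u-u_S\|_{a,q}\le 2\|u\|_{a,q}$ and $\|\nabla(u-u_S)\|_{b,p}\le 2\|\nabla u\|_{b,p}$.

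Next, I would apply the hypothesis to each piece separately, getting
\begin{equation*}
\|u_S\|_{c,r}\le C\|\nabla u\|_{b,p}^{\theta}\|u\|_{a,q}^{1-\theta},\qquad \|u-u_S\|_{c,r}\le C\,2^{\theta}2^{1-\theta}\|\nabla u\|_{b,p}^{\theta}\|u\|_{a,q}^{1-\theta}=2C\|\nabla u\|_{b,p}^{\theta}\|u\|_{a,q}^{1-\theta}.
\end{equation*}
Since $r\ge 1$, the usual triangle inequality in $L^r(\mathbb{R}^N;|x|^c dx)$ applied to $u=u_S+(u-u_S)$ gives $\|u\|_{c,r}\le\|u_S\|_{c,r}+\|u-u_S\|_{c,r}\le 3C\,\|\nabla u\|_{b,p}^{\theta}\|u\|_{a,q}^{1-\theta}$, which is the desired inequality after replacing $C$ by $3C$.

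There is no real obstacle: the only mild subtlety is confirming that the radial-symmetrization map is well defined and bounded on $W_{\{a,b\}}^{1,(q,p)}(\mathbb{R}_*^N)$ with the right estimates on both $\|\cdot\|_{a,q}$ and $\|\nabla(\cdot)\|_{b,p}$, which is precisely what Lemma~\ref{lm7}(ii) provides (via the identity $|\nabla u_S|=|\partial_\rho u_S|$ for radial functions). The hypothesis $r\ge 1$ is what makes the final triangle inequality legitimate; note that no assumption on $\theta$ beyond $\theta\in[0,1]$ enters.
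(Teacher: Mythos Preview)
Your proof is correct and follows essentially the same approach as the paper: decompose $u=u_S+(u-u_S)$, use Lemma~\ref{lm7}(ii) together with $\|\nabla u_S\|_{b,p}=\|\partial_\rho u_S\|_{b,p}$ to bound each piece in the source norms by the corresponding norms of $u$, apply the hypothesis to each piece, and finish with the triangle inequality in $L^r$. The only cosmetic difference is that you track the constant explicitly as $3C$, whereas the paper simply modifies $C$ at each step.
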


\begin{proof}
Let $u\in W_{\{a,b\}}^{1,(q,p)}(\Bbb{R}_{*}^{N})$ be given. Then $%
u=u_{S}+(u-u_{S}),$ where $u_{S}\in W_{rad}$ and $u-u_{S}\in W_{0}.$ By part
(ii) of Lemma \ref{lm7}, $||u_{S}||_{a,q}\leq ||u||_{a,q}$ and $||\partial
_{\rho }u_{S}||_{b,p}\leq ||\partial _{\rho }u||_{b,p}\leq ||\nabla
u||_{b,p}.$ Since $||\partial _{\rho }u_{S}||_{b,p}=||\nabla u_{S}||_{b,p},$
the inequality $||u_{S}||_{c,r}\leq $\linebreak $C||\nabla
u_{S}||_{b,p}^{\theta }||u_{S}||_{a,q}^{1-\theta }$ yields $%
||u_{S}||_{c,r}\leq C||\nabla u||_{b,p}^{\theta }||u||_{a,q}^{1-\theta }$
after modifying $C.$

Also, $||u-u_{S}||_{a,q}\leq ||u||_{a,q}+||u_{S}||_{a,q}\leq 2||u||_{a,q}$
and $||\nabla (u-u_{S})||_{b,p}\leq ||\nabla u||_{b,p}+||\nabla
u_{S}||_{b,p}\leq M||\nabla u||_{b,p}$ for some $M>0$ independent of $u.$
Thus, $||u-u_{S}||_{c,r}\leq C||\nabla u||_{b,p}^{\theta
}||u||_{a,q}^{1-\theta }$ after another modification of $C.$ As a result, $%
||u||_{c,r}\leq ||u_{S}||_{c,r}+||u-u_{S}||_{c,r}\leq 2C||\nabla
u||_{b,p}^{\theta }||u||_{a,q}^{1-\theta }.$
\end{proof}

\begin{theorem}
\label{th29} Let $a,b\in \Bbb{R}$ and $1\leq p,q,r<\infty $ be such that $%
\frac{a+N}{q}=\frac{b-p+N}{p}\neq 0$ and let $c=c^{0}=c^{1}.$\newline
(i) If $p\leq r\leq p^{*},$ there is a constant $C>0$ such that 
\begin{equation}
||u||_{c^{1},r}\leq C||\nabla u||_{b,p},\qquad \forall u\in
W_{\{a,b\}}^{1,(q,p)}(\Bbb{R}_{*}^{N}).  \label{47}
\end{equation}
(ii) If $r=p=q$ or if $p\neq q$ and $\min \{p,q\}\leq r\leq \max \{p,q\},$
there is a constant $C>0$ such that 
\begin{equation}
||u||_{c^{1},r}\leq C||\nabla u||_{b,p}^{\theta }||u||_{a,q}^{1-\theta
},\qquad \forall u\in W_{\{a,b\}}^{1,(q,p)}(\Bbb{R}_{*}^{N}),  \label{48}
\end{equation}
where $\theta =0$ if $r=p=q$ and $\theta =\frac{p(r-q)}{r(p-q)}$ if $p\neq
q. $
\end{theorem}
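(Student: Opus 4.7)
The plan is to apply Lemma \ref{lm28}, which reduces the multiplicative inequality on all of $W_{\{a,b\}}^{1,(q,p)}(\mathbb{R}_*^N)$ to the same inequality (with the same $\theta$) on the two complementary subspaces $W_{rad}$ and $W_0$. Both pieces are already available among the earlier results.

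First I would record the preliminary observation that the common value $\frac{a+N}{q}=\frac{b-p+N}{p}$ is nonzero by hypothesis, so $a+N$ and $b-p+N$ share the same nonzero sign. Consequently $a$ and $b-p$ lie \emph{strictly} on the same side of $-N$, which places us within the hypotheses of Theorem \ref{th16}(iii) for every $r\geq p$, and within those of Theorem \ref{th16}(iv) for every $r$ with $\min\{p,q\}\leq r\leq\max\{p,q\}$ (with $c=c^0=c^1$).

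For part (i), I would take $\theta=1$. On $W_0$, inequality (\ref{47}) is exactly (\ref{38}) of Corollary \ref{cor20}(ii), which is valid in the required range $p\leq r\leq p^*$. On $W_{rad}$, Theorem \ref{th16}(iii) supplies the radial estimate (\ref{27}), $\|u\|_{c^1,r}\leq C\|\partial_\rho u\|_{b,p}$, for every $r\geq p$; since $|\nabla u|=|\partial_\rho u|$ pointwise a.e. for radially symmetric $u$, this is (\ref{47}) restricted to $W_{rad}$. An appeal to Lemma \ref{lm28} with $\theta=1$ then yields (\ref{47}) on the whole space. For part (ii) I would choose $\theta$ as prescribed in the statement: on $W_0$, (\ref{48}) is precisely (\ref{39}) of Corollary \ref{cor20}(ii), and on $W_{rad}$ it is (\ref{28}) of Theorem \ref{th16}(iv), both stated with the same $\theta$. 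A brief verification confirms that the range $\min\{p,q\}\leq r\leq\max\{p,q\}$ (or $r=p=q$) matches the hypotheses of both results, and that $c=c^0$ collapses to $c=a$ precisely when $r=q$, keeping the convention $\theta=0$ consistent in both formulas. Again Lemma \ref{lm28} concludes.

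Since every ingredient has already been assembled, there is no substantive obstacle to overcome: the only work is to check that the ranges of $r$ and the values of $\theta$ appearing in Corollary \ref{cor20}(ii) and Theorem \ref{th16}(iv) agree with those required in Theorem \ref{th29}, and this compatibility is exactly what the formulations of those earlier results were designed to supply.
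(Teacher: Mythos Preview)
Your proposal is correct and follows essentially the same approach as the paper's own proof: both parts are obtained by combining Lemma \ref{lm28} with the radial estimates (\ref{27}) and (\ref{28}) from Theorem \ref{th16}(iii)--(iv) and the $W_0$ estimates (\ref{38}) and (\ref{39}) from Corollary \ref{cor20}(ii). Your preliminary observation that $a$ and $b-p$ lie strictly on the same side of $-N$ (needed to invoke Theorem \ref{th16}(iii)) is a helpful clarification that the paper leaves implicit.
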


\begin{proof}
(i) Use Lemma \ref{lm28} together with the inequality (\ref{27}) in part
(iii) of Theorem \ref{th16} and the inequality (\ref{38}) in part (ii) of
Corollary \ref{cor20}.

(ii) Use Lemma \ref{lm28} together with the inequality (\ref{28}) in part
(iv) of Theorem \ref{th16} and the inequality (\ref{39}) in part (ii) of
Corollary \ref{cor20}.
\end{proof}

\begin{remark}
\label{rm5}\smallskip If $\frac{a+N}{q}=\frac{b-p+N}{p}\neq 0$ and $p\leq
r\leq \min \{p^{*},\max \{p,q\}\},$ (\ref{47}) and (\ref{48}) show that $%
||u||_{c^{1},r}\leq C||\nabla u||_{b,p}^{\theta }||u||_{a,q}^{1-\theta }$
for $u\in W_{\{a,b\}}^{1,(q,p)}(\Bbb{R}_{*}^{N})$ with $\theta =1$ and with $%
\theta =\underline{\theta },$ where $\underline{\theta }=\frac{p(r-q)}{r(p-q)%
}$ if $p\neq q$and $\underline{\theta }=0$ if $p=r=q.$ Hence, the inequality
holds with $\theta \in \left[ \underline{\theta },1\right] $ and so $\theta $
is not unique if $r>p\neq q$ or if $r=p=q.$ This is actually trivial if $%
r=q\geq p$ (because (\ref{48}) is trivial), but not in the other cases: $%
p<r<q\leq p^{*}$ or $p<N$ and $p<r\leq p^{*}<q.$
\end{remark}

Clearly, (\ref{47}) is an $N$-dimensional weighted Hardy-type inequality,
apparently new when $q\neq p.$ It is proved in \cite[p. 309]{OpKu90} when $%
q=p,$ so that $a=b-p\neq -N.$ When $u\in C_{0}^{\infty }(\Bbb{R}_{*}^{N}),$
it was obtained earlier by Gatto, Guti\'{e}rrez and Wheeden \cite{GaGuWh85},
who showed that $p\leq r\leq p^{*}$ is already necessary in that setting. A
number of special cases of (\ref{47}) for various classes of smooth
functions with compact support can be found in both the older and the recent
literature (\cite{GlMaGrTh76}, \cite{Li83}, \cite{SuWaWi07}, among others).
The inequality (\ref{48}), meaningless when $q=p,$ seems to be known only if 
$a,b,c>-N$ and $u\in C_{0}^{\infty }(\Bbb{R}^{N}),$ when it is one of the
CKN inequalities.

By Corollary \ref{cor2}, the inequality (sharper than (\ref{46})) 
\begin{equation*}
||u||_{c,r}\leq C||\partial _{\rho }u||_{b,p}^{\theta
_{c}}||u||_{a,q}^{1-\theta _{c}},\qquad \forall u\in \widetilde{W}%
_{\{a,b\}}^{(q,p)},
\end{equation*}
holds if $\frac{a+N}{q}\neq \frac{b-p+N}{p},c$ is in the closed interval
with endpoints $c^{0}$ and $c^{1}$ and $\widetilde{W}_{\{a,b\}}^{(q,p)}%
\hookrightarrow L^{r}(\Bbb{R}^{N};|x|^{c}dx).$ Necessary and sufficient
conditions for this embedding were given in Theorem \ref{th18} when $r\leq
\min \{p,q\},$ where it is also shown that $\widetilde{W}_{\{a,b\}}^{(q,p)}%
\hookrightarrow L^{r}(\Bbb{R}^{N};|x|^{c}dx)$ if $\frac{a+N}{q}=\frac{b-p+N}{
p}\neq 0,r=p$ ($\leq q$) and $c=c^{0}=c^{1}.$ If so, it follows from part
(iii) of Theorem \ref{th16} and from Lemma \ref{lm17} that 
\begin{equation*}
||u||_{b-p,p}\leq C||\partial _{\rho }u||_{b,p},\qquad \forall u\in 
\widetilde{W}_{\{a,b\}}^{(q,p)}.
\end{equation*}

The only case when the embedding $W_{\{a,b\}}^{1,(q,p)}(\Bbb{R}
_{*}^{N})\hookrightarrow L^{r}(\Bbb{R}^{N};|x|^{c}dx)$ is true but \emph{not}
equivalent to a multiplicative inequality arises in part (vi) of Theorem \ref
{th0} when $N\geq 2$ (if $u$ is radially symmetric, or $N=1,$ see (\ref{29}):

\begin{theorem}
\label{th30}If $q<r\leq p^{*},$ then $W_{\{-N,p-N\}}^{1,(q,p)}(\Bbb{R}%
_{*}^{N})\hookrightarrow L^{r}(\Bbb{R}^{N};|x|^{-N}dx)$ but when $N\geq 2,$
the inequality $||u||_{-N,r}\leq C||\nabla u||_{p-N,p}^{\theta
}||u||_{-N,q}^{1-\theta }$ fails to hold for every $C>0$ and every $\theta
\in [0,1].$
\end{theorem}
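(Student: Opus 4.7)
The embedding $W^{1,(q,p)}_{\{-N,p-N\}}(\Bbb{R}_*^N)\hookrightarrow L^r(\Bbb{R}^N;|x|^{-N}dx)$ for $q<r\le p^*$ is already established: it falls under part~(iv) of Theorem~\ref{th26} (equivalently, part~(vi) of Theorem~\ref{th0}), so only the failure of the multiplicative inequality when $N\ge 2$ requires an argument. The plan is to argue by contradiction: assuming some $C>0$ and $\theta\in[0,1]$ would work, I will exhibit two one-parameter families of test functions in $C_0^\infty(\Bbb{R}_*^N)$ whose scaling behaviour forces two incompatible conditions on $\theta$. Throughout, set $\alpha:=1/q-1/r>0$ and $\beta:=1/q-1/p$; the assumption $r\le p^*$ (together with $q<p^*$) ensures $1+N\beta>0$, and hence also $1+\beta>0$.

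\emph{First family — logarithmic radial rescaling.} Fix $\chi\in C_c^\infty(\Bbb{R})\setminus\{0\}$ and set $u_\lambda(x):=\chi(\lambda\log|x|)$ for $\lambda>0$; each $u_\lambda$ is supported in an annulus around $|x|=1$ and so lies in $C_0^\infty(\Bbb{R}_*^N)$. The substitution $s=\log|x|$ turns $|x|^{-N}dx$ into $d\sigma\,ds$ on $S^{N-1}\times\Bbb{R}$, and a direct computation yields
\[
\|u_\lambda\|_{-N,s}\asymp\lambda^{-1/s}\quad(s\in\{q,r\}),\qquad \|\nabla u_\lambda\|_{p-N,p}\asymp\lambda^{1-1/p}.
\]
Plugging these into the candidate inequality and letting $\lambda\to 0^+$ and $\lambda\to\infty$ in turn forces the exponents of $\lambda$ to match exactly, yielding the single admissible value $\theta=\theta_*:=\alpha/(1+\beta)$.

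\emph{Second family — unit bump translated to distance $R$ from the origin.} Fix $\phi\in C_c^\infty(B(0,1))\setminus\{0\}$, pick $x_R\in\Bbb{R}^N$ with $|x_R|=R\ge 2$, and set $v_R(x):=\phi(x-x_R)$. Since $|x|\asymp R$ uniformly on $\mathrm{supp}\,v_R$, straightforward estimation gives
\[
\|v_R\|_{-N,s}\asymp R^{-N/s}\quad(s\in\{q,r\}),\qquad \|\nabla v_R\|_{p-N,p}\asymp R^{1-N/p}.
\]
Substituting and letting $R\to\infty$ produces the one-sided constraint $\theta\ge \theta_{**}:=N\alpha/(1+N\beta)$. (One cannot also let $R\to 0$ in this family since the support would then overlap the origin and the asymptotic $|x|\asymp R$ would fail.)

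The two conditions $\theta=\theta_*$ and $\theta\ge\theta_{**}$ together require $\alpha/(1+\beta)\ge N\alpha/(1+N\beta)$, which (since $\alpha,1+\beta,1+N\beta>0$) simplifies to $1+N\beta\ge N(1+\beta)$, i.e.\ $1\ge N$. This fails for $N\ge 2$, so no such pair $(C,\theta)$ can exist. The only point that needs genuine care is the uniform comparability $|x|\asymp R$ on $\mathrm{supp}\,v_R$ underpinning the second family — a routine verification once $R\ge 2$ is fixed — and then verifying that $\theta_*\in(0,1)$ always (so that it cannot be excluded a priori) while $\theta_{**}\in(0,1]$; everything else is bookkeeping with changes of variable.
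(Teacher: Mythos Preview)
Your proof is correct and follows essentially the same approach as the paper's: the logarithmic radial family $u_\lambda(x)=\chi(\lambda\log|x|)$ is precisely the paper's choice $u(x)=g(\ln|x|)$ (which pins $\theta$ to $\breve\theta$), and the translated bump family $v_R$ is exactly the argument underlying Theorem~\ref{th3}. The only differences are cosmetic---you spell out the computations rather than citing those earlier results, and your reference to Theorem~\ref{th26}(iv) should strictly be to Theorem~\ref{th0}(vi) (as your parenthetical notes), since Theorem~\ref{th26} assumes $r\ge p$.
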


\begin{proof}
The embedding is part (vi) of Theorem \ref{th0}. Also, the inequality can
only hold if $\theta =\breve{\theta}:=\left( 1-\frac{q}{r}\right) \left( 
\frac{q}{p^{\prime }}+1\right) ^{-1}.$ This follows by choosing $u(x)=g(\ln
|x|)$ with $g\in C_{0}^{\infty }(\Bbb{R})$ and by reversing the steps of the
proof of part (v) of Theorem \ref{th16} (by \cite{CaKoNi84}, (\ref{31})
cannot hold with $\theta \neq \breve{\theta}$ when $g\in C_{0}^{\infty }(%
\Bbb{R})$ is arbitrary).

Next, if $||u||_{-N,r}\leq C||\nabla u||_{p-N,p}^{\theta
}||u||_{-N,q}^{1-\theta },$ the method of proof of Theorem \ref{th3} with $%
a=c=-N$ and $b=p-N$ shows that $\theta \left( \frac{1}{p}-\frac{1}{N}-\frac{1%
}{q}\right) \leq \frac{1}{r}-\frac{1}{q}.$ Upon substituting the only
possible value $\theta =\breve{\theta},$ a short calculation yields $%
q(N-1)\geq r(N-1).$ If $N\geq 2,$ this implies $q\geq r,$ which contradicts $%
q<r\leq p^{*}.$

Consistent with Theorem \ref{th30} and its proof, it is easily verified that
when $a=b-p=c=-N,\theta =\breve{\theta}$ and $N\geq 2,$ Lemma \ref{lm28} is
not applicable because condition (i) of Lemma \ref{lm19} fails, so that (\ref
{34}) cannot be used.
\end{proof}

\section{Examples\label{examples}}

\subsection{Embedding of unweighted spaces into $L^{r}(\Bbb{R}
^{N};|x|^{c}dx) $\label{ex1}}

We spell out the special case of Theorem \ref{th0} when $a=b=0.$ It is
noteworthy that $W^{1,(q,p)}(\Bbb{R}_{*}^{N})=W^{1,(q,p)}(\Bbb{R}%
^{N})=\{u\in L^{q}(\Bbb{R}^{N}):\nabla u\in (L^{p}(\Bbb{R}^{N}))^{N}\}$ if $%
N\geq 2,$ with the same norm (see Remark \ref{rm6} later). At any rate, if $%
a=b=0,$ then $\theta _{c}$ in (\ref{2}) is defined if and only if $\frac{1}{p%
}-\frac{1}{N}-\frac{1}{q}\neq 0,$ i.e., $q\neq p^{*}$ and then $\theta
_{c}=\left( \frac{c+N}{rN}-\frac{1}{q}\right) \left( \frac{1}{p}-\frac{1}{N}-%
\frac{1}{q}\right) ^{-1}.$ Therefore, the condition $\theta _{c}\left( \frac{
1}{p}-\frac{1}{N}-\frac{1}{q}\right) \leq \frac{1}{r}-\frac{1}{q}$ in parts
(i) and (ii) of Theorem \ref{th0} is just $c\leq 0.$ It follows that $%
W^{1,(q,p)}(\Bbb{R}_{*}^{N})\hookrightarrow L^{r}(\Bbb{R}^{N};|x|^{c}dx)$ if
and only if $r\leq \max \{p^{*},q\}$ and one of the following conditions
holds:

(i) $p\leq N,q\neq p^{*}$ and $c\leq 0$ is in the open interval with
endpoints $\frac{rN}{q}-N$ and $r\left( \frac{N-p}{p}\right) -N$ (a nonempty
set if $r<\max \{p^{*},q\}$).

(ii) $p>N$ and either $r\leq q$ and $-N<c<\frac{rN}{q}-N$ ($\leq 0$) or $r>q$
and $-N<c\leq 0.$

(iii) $r=q$ and $c=0.$

(iv) $p<N,p\leq r\leq p^{*}$ and $c=r\left( \frac{N-p}{p}\right) -N$ ($\leq
0 $ since $r\leq p^{*}$).

Since $\frac{N}{q}=\frac{N}{p}-1$ implies $p<N$ and $q>p,$ part (v) of
Theorem \ref{th0} coincides with (iv) above. Part (vi) of Theorem \ref{th0}
is not applicable.

If $r\leq \min \{p,q\},$ the conditions (i)-(iv) are necessary and
sufficient for $\widetilde{W}^{1,(q,p)}\hookrightarrow L^{r}(\Bbb{R}%
^{N};|x|^{c}dx),$ where $\widetilde{W}^{1,(q,p)}:=\widetilde{W}
_{\{0,0\}}^{1,(q,p)}$ is unweighted (Theorem \ref{th18}) an they take the
simpler form (i) $p\leq N,q\neq p^{*}$ and $c$ is in the open interval with
endpoints $\frac{rN}{q}-N$ and $r\left( \frac{N-p}{p}\right) -N$ (hence $c<0$%
). (ii) $p>N$ and $-N<c<\frac{rN}{q}-N$ ($\leq 0$). (iii) $q\leq p,r=q$ and $%
c=0.$ (iv) $p\leq q,p<N,r=p$ and $c=-p.$

When $c=0,$ the conditions become (i) $p<N$ and $r$ is in the closed
interval with endpoints $p^{*}$ and $q$ or (ii) $p\geq N$ and $r\geq q.$
This is of course well-known, especially when $p=q.$

\begin{remark}
\label{rm6}That $W^{1,(q,p)}(\Bbb{R}_{*}^{N})=W^{1,(q,p)}(\Bbb{R}^{N})$ with
the same norm if $N>1$ can be seen as follows: First, it suffices to show
that if $u\in W^{1,(q,p)}(\Bbb{R}_{*}^{N})$ has bounded support, then $u\in
W^{1,(q,p)}(\Bbb{R}^{N})$ with the same norm. Now, if $u\in W^{1,(q,p)}(\Bbb{%
R}_{*}^{N})$ has bounded support, then $u\in W^{1,\min \{p,q\}}(\Bbb{R}%
_{*}^{N})=W^{1,\min \{p,q\}}(\Bbb{R}^{N}),$ for example by \cite[p. 52]
{HeKiMa93}. Thus, as a distribution on $\Bbb{R}^{N},\nabla u$ is a function,
so that its restriction to $\Bbb{R}_{*}^{N}$ coincides with $\nabla u$ as a
distribution on $\Bbb{R}_{*}^{N}.$ Since the latter is in $(L^{q}(\Bbb{R}%
^{N}))^{N},$ the same thing is true of the former, which proves the claim.
\end{remark}

\subsection{Embedding of weighted spaces into $L^{r}(\Bbb{R}^{N})$\label{ex2}
}

The necessary and sufficient conditions for $W_{\{a,b\}}^{1,(q,p)}(\Bbb{R}
_{*}^{N})\hookrightarrow L^{r}(\Bbb{R}^{N})$ are given by Theorem \ref{th0}
with $c=0.$ If so, $\theta _{0}=\left( \frac{N}{r}-\frac{a+N}{q}\right)
\left( \frac{b-p+N}{p}-\frac{a+N}{q}\right) ^{-1}$ in (\ref{2}) when $\frac{%
a+N}{q}\neq \frac{b-p+N}{p}$ and these conditions become (after some work) $%
r\leq \max \{p^{*},q\}$ and

(i)-(ii) Either $-N\leq a<N\left( \frac{q}{r}-1\right) ,b>p+N\left( \frac{p}{%
r}-1\right) $and $a\left( \frac{p}{r}-1+\frac{p}{N}\right) \leq b\left( 
\frac{q}{r}-1\right) ,$ or $a>N\left( \frac{q}{r}-1\right) ,b<p+N\left( 
\frac{p}{r}-1\right) $ and $a\left( \frac{p}{r}-1+\frac{p}{N}\right) \geq
b\left( \frac{q}{r}-1\right) .$

(iii) $r=q$ and $a=0.$

(iv) $p\leq r\leq p^{*},b=p+N\left( \frac{p}{r}-1\right) $ ($\leq p$) and $%
a\geq -N.$

(v) $r\geq \min \{p,q\},a=N\left( \frac{q}{r}-1\right) $ and $b=p+N\left( 
\frac{p}{r}-1\right) .$

In (i)-(ii) above, the condition $\theta _{0}\left( \frac{1}{p}-\frac{1}{N}-%
\frac{1}{q}\right) \leq \frac{1}{r}-\frac{1}{q}$ is accounted for by $%
a\left( \frac{p}{r}-1+\frac{p}{N}\right) \leq b\left( \frac{q}{r}-1\right) $
or its reverse, as the case may be. By Remark \ref{rm1}, this condition
holds if $r\leq \min \{p^{*},q\},$ which of course is corroborated by a
direct verification.

\subsection{Embedding when $p=q$\label{ex3}}

If $p=q,$ then $r\leq \max \{p^{*},q\}$ is simply $r\leq p^{*}$ and $\frac{
a+N}{q}\neq \frac{b-p+N}{p}$ if and only if $a\neq b-p.$ The condition $%
\theta _{c}\left( \frac{1}{p}-\frac{1}{N}-\frac{1}{q}\right) \leq \frac{1}{r}
-\frac{1}{q}$ in parts (i) and (ii) of Theorem \ref{th0} becomes $\theta
_{c}\geq \frac{N}{p}-\frac{N}{r},$ which is not a restriction when $r\leq p.$
Also, part (v) is now a special case of part (iv).

If, in addition, $p=q=r,$ Theorem \ref{th18} is applicable to the larger
space $\widetilde{W}_{\{a,b\}}^{1,(p,p)}.$ Furthermore, $c^{0}=a$ and $%
c^{1}=b-p$ and so $\widetilde{W}_{\{a,b\}}^{1,(p,p)}\hookrightarrow L^{p}(%
\Bbb{R}^{N};|x|^{c}dx)$ if and only if either (i) $a$ and $b-p$ are on the
same side of $-N,$ not both equal to $-N,$ and $c$ is in the closed interval
with endpoints $a$ and $b-p$ or (ii) $a$ and $b-p$ are strictly on opposite
sides of $-N$ and $c$ is in the semi-open interval with endpoints $a$
(included) and $-N$ (not included). These are also necessary and sufficient
conditions for $W_{\{a,b\}}^{1,(p,p)}(\Bbb{R}_{*}^{N})\hookrightarrow L^{p}(%
\Bbb{R}^{N};|x|^{c}dx).$

When $p=q=r=2$ and $c=\frac{a+b}{2}-1,$ it follows from \cite{CaCo09} that $%
C_{0}^{\infty }(\Bbb{R}_{*}^{N})\hookrightarrow L^{2}(\Bbb{R}
^{N};|x|^{c}dx), $ unless $a=b-2=-N.$ If (for example) $a<-N$ and $%
b>-a+2-2N, $ then $b-2>-N,$ so that $a$ and $b-2$ are on opposite sides of $%
-N$ but, since $c=\frac{a+b}{2}-1>-N,$ condition (ii) above does not hold
and so $W_{\{a,b\}}^{1,(2,2)}(\Bbb{R}_{*}^{N})$ is not continuously embedded
into $L^{2}(\Bbb{R}^{N};|x|^{c}dx).$ This shows that $C_{0}^{\infty }(\Bbb{R}
_{*}^{N})$ is not dense in $W_{\{a,b\}}^{1,(2,2)}(\Bbb{R}_{*}^{N}).$
Accordingly, in general, embedding (or other) inequalities for $%
W_{\{a,b\}}^{1,(p,q)}(\Bbb{R}_{*}^{N})$ cannot be proved by confining attention to $%
C_{0}^{\infty }(\Bbb{R}_{*}^{N}).$

\subsection{A generalization\label{ex4}}

Let $B\subset \Bbb{R}^{N}$ be an open ball centered at the origin. If the
space $\{u\in W_{\{a,b\}}^{1,(q,p)}(\Bbb{R}_{*}^{N}):\limfunc{Supp}u\subset 
\overline{B}\}$ is continuously embedded into $L^{r}(\Bbb{R}^{N};|x|^{c}dx),$
it is also continuously embedded into $L^{r}(\Bbb{R}^{N};|x|^{d}dx)$ when $%
d\geq c.$ Likewise, if $\{u\in W_{\{a,b\}}^{1,(q,p)}(\Bbb{R}_{*}^{N}):%
\limfunc{Supp}u\subset \Bbb{R}^{N}\backslash B\}$ is continuously embedded
into $L^{r}(\Bbb{R}^{N};|x|^{c}dx),$ it is also continuously embedded into $%
L^{r}(\Bbb{R}^{N};|x|^{d}dx)$ when $d\leq c.$

With this remark and a cut-off argument, Theorem \ref{th0} can be extended
to more general weighted spaces. Let $x_{1},...,x_{k}\in \Bbb{R}^{N}$ be
distinct points and let $a_{1},...,a_{k},a_{\infty
},b_{1},...,b_{k},b_{\infty }\in \Bbb{R}$ and $1\leq r\leq p,q<\infty $ be
given. For $a,b\in \Bbb{R},$ call $J(a,b):=\{c\in \Bbb{R}:W_{\{a,b%
\}}^{1,(q,p)}(\Bbb{R}_{*}^{N})\hookrightarrow L^{r}(\Bbb{R}^{N};|x|^{c}dx)\}$
the interval of admissible $c$ characterized in Theorem \ref{th0}, with
endpoints $c_{-}(a,b)\leq c_{+}(a,b)$ and let $c_{1},...,c_{k},c_{\infty }$
be such that $c_{i}>c_{-}(a_{i},b_{i}),1\leq i\leq k$ and $c_{\infty
}<c_{+}(a_{\infty },b_{\infty })$ (the endpoints may be included if they are
in the admissible interval). If $w_{a},w_{b}$ and\footnote{%
Here, $a,b$ and $c$ are just indices.} $w_{c}$ are positive weights on $\Bbb{%
R}^{N}\backslash \{x_{1},...,x_{k}\}$ such that $%
w_{a}(x)=|x-x_{i}|^{a_{i}},w_{b}(x)=|x-x_{i}|^{b_{i}},w_{c}(x)=|x-x_{i}|^{c_{i}} 
$ for $x$ near $x_{i},i=1,...,k$ and $w_{a}(x)=|x|^{a_{\infty
}},w_{b}(x)=|x|^{b_{\infty }},w_{c}(x)=|x|^{c_{\infty }}$ for large $|x|,$
then the space $W_{\{w_{a},w_{b}\}}^{1,(q,p)}(\Bbb{R}^{N}\backslash
\{x_{1},...,x_{k}\}):=$\linebreak $\{u\in L_{loc}^{1}(\Bbb{R}^{N}\backslash
\{x_{1},...,x_{k}\}):u\in L^{q}(\Bbb{R}^{N};w_{a}(x)dx),\nabla u\in (L^{q}(%
\Bbb{R}^{N};w_{b}(x)dx))^{N}\}$ is continuously embedded into $L^{r}(\Bbb{R}%
^{N};w_{c}(x)dx).$

A somewhat heuristic but compelling reason why such conditions should be
optimal is simple: As pointed out above, the membership to $L^{r}(\Bbb{R}%
^{N};|x|^{c}dx)$ of functions with support in a closed ball $\overline{B}$
about the origin is unaffected by increasing $c.$ Thus, the value of the
upper end $c_{+}(a,b)$ can only be dictated by the behavior of functions
with support bounded away from $0.$ The optimality of the lower end $%
c_{-}(a,b)$ is justified by a similar argument. However, this rationale is
meaningless when $J(a,b)=\emptyset .$ If so, the simplest way around the
difficulty is to rely on the related fact that for functions with support in 
$\overline{B},$ membership to $W_{\{a,b\}}^{1,(q,p)}(\Bbb{R}_{*}^{N})$ is
unaffected by increasing $a$ or $b,$ so that doing so until $J(a,b)$ becomes
nonempty can be used to define $c_{-}(a,b).$ Likewise, $a$ or $b$ can be
decreased to define $c_{+}(a,b).$ This may or may not produce the best
possible conditions. Due to space limitations, a more detailed investigation
of the optimality issue by more sophisticated procedures (elaboration on
Remark \ref{rm4}) will not be attempted here.

Naturally, the weights need only to ``look like'' (not coincide with) power
weights in the vicinity of the points $x_{i}$ (or infinity). This remark
clarifies two things. First, $w_{a},w_{b}$ and $w_{c}$ need actually not
have power-like singularities at the same points: This case is reduced to
the previous one by adding points as needed and setting the corresponding $%
a_{i},b_{i}$ or $c_{i}$ equal to $0.$ Next, the cut-off argument is
technically simplified, and nothing is changed, if it is assumed that $%
w_{a}(x)=|x-x_{1}|^{a_{\infty }},w_{b}(x)=|x-x_{1}|^{b_{\infty
}},w_{c}(x)=|x-x_{1}|^{c_{\infty }}$ for large $|x|$ (otherwise, the origin
plays a technical role even when it is not one of the points $x_{i}$).
Theorem \ref{th0} is recovered when $k=1,x_{1}=0$ and $a_{1}=a_{\infty
},b_{1}=b_{\infty },c_{1}=c_{\infty }.$

If only $k=1$ and $x_{1}=0,$ Theorem \ref{th16} too can be generalized to
obtain the embedding of the subspace $W_{rad}$ of radially symmetric
functions in $W_{\{w_{a},w_{b}\}}^{1,(q,p)}(\Bbb{R}_{*}^{N})$ into $L^{r}(%
\Bbb{R}^{N};w_{c}(x)dx)$ under the conditions $%
c_{1}>c_{-}^{rad}(a_{1},b_{1}) $ and $c_{\infty }<c_{+}^{rad}(a_{\infty
},b_{\infty }),$ where $c_{\pm }^{rad}(a,b)$ denote the endpoints of the
admissible interval in Theorem \ref{th16}. Once again, $%
c_{-}^{rad}(a_{1},b_{1})$ and $c_{+}^{rad}(a_{\infty },b_{\infty })$ may be
included if they are in the admissible interval and they can also be defined
when the admissible interval is empty.

When $1<p=q<N$ and $w_{b}=1$ (so that $b_{1}=b_{\infty }=0$), the embedding
into $L^{r}(\Bbb{R}^{N};w_{c}(x)dx)$ of the closure $C_{rad}$ of the space
of radially symmetric functions in $C_{0}^{\infty }(\Bbb{R}^{N})\cap L^{p}(%
\Bbb{R}^{N};w_{a}(x)dx)$ equipped with the $W_{\{w_{a},1\}}^{1,(p,p)}(\Bbb{R}
_{*}^{N})$ norm, has recently been investigated by Su \textit{et al.} 
\cite[Theorems 1 and 2]{SuWaWi07}. They assume that $a_{1},c_{1},a_{\infty
},c_{\infty }$ are given and find the admissible values of $r$ under the
implicit assumption $r\geq p.$ The reformulation in terms of lower (upper)
bounds about $c_{1}$ ($c_{\infty }$) given $a_{1},a_{\infty }$ and $r$ is
conceptually trivial, but quite messy and technical in practice.
Accordingly, we shall not elaborate beyond the remark that, because $C_{rad}$
is usually \emph{smaller} than $W_{rad},$ the embedding may be true under
conditions more general than $c_{1}\geq c_{-}^{rad}(a_{1},1)$ and $c_{\infty
}\leq c_{+}^{rad}(a_{\infty },1).$ On the other hand, the case $0<r<p$ and
all others ($p=1,p\geq N,q\neq p,b_{1}\neq 0,b_{\infty }\neq 0$) can be
handled by the method outlined above.

\end{document}